\def\ba{\begin{array}}
\def\ea{\end{array}}
\newcommand{\beq}{\begin{equation}}
\newcommand{\eeq}{\end{equation}}
\newcommand{\bq}{\begin{eqnarray}}
\newcommand{\eq}{\end{eqnarray}}
\newcommand{\bqn}{\begin{eqnarray*}}
\newcommand{\eqn}{\end{eqnarray*}}
\newcommand{\bee}{\begin{enumerate}}
\newcommand{\eee}{\end{enumerate}}
\newcommand{\bi}{\begin{itemize}}
\newcommand{\ei}{\end{itemize}}
\newcommand{\ii}{\textbf{i}}
\newcommand{\slow}[1]{\ifthenelse{\boolean{showcomments}}
{ \textcolor{red}{(Steven says:  #1)}}{}}
\newtheorem{theorem}{Theorem}
\newtheorem{lemma}[theorem]{Lemma}
\newtheorem{corollary}[theorem]{Corollary}
\newtheorem{remark}{Remark}
\begin{document}

\markboth{IEEE Trans. on Control of Network Systems, June 2014
(with proofs)}
{IEEE Trans. on Control of Network Systems, June 2014
(with proofs)}

\title{Convex Relaxation of Optimal Power Flow\\
{\huge Part II: Exactness}
\thanks{
\hspace{-0.34in}
\textbf{Citation:
\emph{IEEE Transactions on Control of Network Systems, June 2014.}
}
This is an extended version with Appendex VI that proves the main results in this
tutorial.
All proofs can be found in their original papers.  We provide proofs here because
(i) it is convenient to have all proofs in one place and in a uniform notation, 
and (ii) some of the formulations and presentations here are slightly different
from those in the original papers.
\newline
A preliminary and abridged version has appeared in
Proceedings of the IREP Symposium - Bulk Power System Dynamics and Control - IX, Rethymnon, Greece, August 25-30, 2013. 
}
}
\author{Steven H. Low \\ Electrical Engineering, Computing+Mathematical Sciences \\
	Engineering and Applied Science, Caltech \\ 
	slow@caltech.edu \\
}
\date{April 29, 2014}

\maketitle

\vspace{-0.5in}
\begin{center}
May 1, 2014
\end{center}
\vspace{0.5in}

\begin{abstract}
This tutorial summarizes recent advances in the convex relaxation of the optimal
power flow (OPF) problem, focusing on structural properties rather than algorithms.
 Part I presents two power flow models, formulates OPF and their relaxations in each 
 model, and  proves equivalence relations among them.
 Part II presents sufficient conditions under which the convex relaxations are
 exact. 
\end{abstract}

\newpage
 \tableofcontents
 \vfill{
\noindent
\textbf{Acknowledgment.}
We thank the support of NSF through NetSE CNS 0911041, 
ARPA-E through GENI DE-AR0000226, 
Southern California Edison, 
the National Science Council of Taiwan through
NSC 103-3113-P-008-001,
the Los Alamos National Lab (DoE),
and Caltech's Resnick Institute.
}
 \newpage

\section{Introduction}

The optimal power flow (OPF) problem is fundamental in power systems as it underlies
many applications such as economic dispatch, unit commitment, state estimation,
stability and reliability assessment, volt/var control, demand response, etc.
OPF seeks to optimize a certain objective function, such as power loss, generation cost 
and/or user utilities,
subject to Kirchhoff's laws as well as capacity, stability and security
constraints on the voltages and power flows.  There has been a great deal of research 
on OPF since Carpentier's first formulation in 1962 \cite{Carpentier62}.
Recent surveys can be found in, 
e.g., \cite{Powerbk, Huneault91,Momoh99a,Momoh99b,Pandya08, Frank2012a, Frank2012b,
OPF-FERC-1, OPF-FERC-2, OPF-FERC-3, OPF-FERC-4, OPF-FERC-5}.

OPF is generally nonconvex and NP-hard, and a large
number of optimization algorithms and relaxations have been proposed.
To the best of our knowledge solving OPF through semidefinite relaxation is first proposed in 
\cite{Jabr2006} as a second-order
cone program (SOCP) for radial (tree) networks and in \cite{Bai2008} as a semidefinite program 
(SDP) for general networks in a bus injection model.  It is first proposed 
in \cite{Farivar2011-VAR-SGC, Farivar-2013-BFM-TPS} as an SOCP for radial 
networks in the branch flow model of \cite{Baran1989a, Baran1989b}.
While these convex relaxations have been illustrated numerically in \cite{Jabr2006} and
\cite{Bai2008}, whether or when they will turn out to be exact is first studied in \cite{Lavaei2012}.
Exploiting graph sparsity to simplify the SDP relaxation of OPF is first proposed in 
\cite{Bai2011, Jabr2012} and analyzed in \cite{MolzahnLesieutre2013, Bose-2014-BFMe-TAC}.

Solving OPF through convex relaxation offers several advantages, as discussed
in Part I of this tutorial \cite[Section I]{Low2014a}.  In particular it provides the ability 
to check if a solution is globally optimal. If it is not, the solution provides a lower 
bound on the minimum cost and hence a bound on how far any feasible solution 
is from optimality.    Unlike approximations, if a relaxed problem is infeasible, it is 
a certificate that the original OPF is infeasible.

This tutorial presents main results on convex relaxations of OPF developed in the
last few years.   In Part I \cite{Low2014a},
 we present the bus injection model (BIM) and the branch 
flow model (BFM), formulate OPF within each model, and prove their equivalence.
The complexity of OPF formulated here lies in the quadratic nature of power
flows, i.e., the nonconvex quadratic constraints on the feasible set of OPF.   
We characterize these feasible sets 
and design convex supersets that lead to three different convex relaxations based
on semidefinite programming (SDP), chordal extension, and second-order cone 
programming (SOCP). 
When a convex relaxation is exact, an optimal solution of
the original nonconvex OPF can be recovered from {every} optimal solution of the
relaxation.  
In Part II we summarize main sufficient conditions
that guarantee the exactness of these relaxations.  

Network topology turns out to play a critical role in determining whether a
relaxation is exact.   In Section \ref{sec:opf} we review the definitions of OPF 
and their convex relaxations developed in \cite{Low2014a}.    We also define
the notion of exactness adopted in this paper.
In Section \ref{sec:exact-radial} we present three types of sufficient conditions
for these relaxations to be exact for radial networks.   These conditions
are generally not necessary and they have implications on allowable
power injections, voltage magnitudes, or voltage angles:
\bee
\item[A] \emph{Power injections:} These conditions require that not both constraints on
	real and reactive power injections be binding at both ends of a line. 
\item[B] \emph{Voltages magnitudes:} These conditions require that the upper bounds on 
	voltage magnitudes not be binding.  They can be enforced through affine constraints
	on power injections.  
\item[C] \emph{Voltage angles:} These conditions require that the voltage angles across
	each line be sufficiently close.   This is needed also for stability reasons.
\eee
These conditions and their references are summarized in Tables
\ref{table:SummaryRadial} and \ref{table:SummaryMesh}.
\begin{table*}[htbp]
\centering
\begin{tabular}{|| c || l | c | c | l ||}	
  \hline
  \hline
type 	& \qquad condition   &  model & reference & \qquad\qquad \quad remark
\\
\hline\hline
A
	& power injections
	& BIM, BFM
	& \cite{Bose2011, Bose-2012-QCQPt, Zhang2011geometry, Zhang2013, Sojoudi2012PES}
	&
\\
	& 
	&
	& \cite{Sojoudi2013,    Farivar2011-VAR-SGC, Farivar-2013-BFM-TPS}
	& 
\\ \hline
B
	& voltage magnitudes
	& BFM
	& \cite{Li-2012-BFMt, Gan-2012-BFMt, Gan-2013-BFMt-CDC, Gan-2014-BFMt-TAC}
	& allows general injection region
\\ \hline
C
  	& voltage angles   
	& BIM
	& \cite{LavaeiTseZhang2012, LamZhang2012}
	& makes use of branch power flows
\\
\hline\hline
\end{tabular}
\caption{Sufficient conditions for radial (tree) networks.}
\label{table:SummaryRadial}

\vspace{0.2in}
\begin{tabular}{|| c || l | c | l ||}	
  \hline
  \hline
network & \ \  condition  & reference & \qquad\qquad \ remark
\\
\hline\hline
with phase shifters
	& type A, B, C
	& \cite[Part II]{Farivar-2013-BFM-TPS}, \cite{SojoudiLavaei2013}
	& equivalent to radial networks
\\ \hline
direct current
	& type A 
	& \cite[Part I]{Farivar-2013-BFM-TPS}, \cite{Lavaei2012, Rantzer2011}
	& assumes nonnegative voltages
\\ \cline{2-4}
	& type B 
	& \cite{Gan-2013-OPFDC, Gan-2014-OPFDC-TPS}
	& assumes nonnegative voltages
\\
\hline\hline
\end{tabular}
\caption{Sufficient conditions for mesh networks}
\label{table:SummaryMesh}
\end{table*}
Some of these sufficient conditions are proved using BIM and others using 
BFM.  Since these two models are equivalent (in the sense that there is a linear bijection
between their solution sets \cite{Bose-2014-BFMe-TAC, Low2014a}), these sufficient conditions
apply to both models.   
The proofs of these conditions typically do not require that the cost function be convex
(they focus on the feasible sets and usually only need the cost function to be monotonic).
Convexity is required however for efficient computation.
Moreover it is proved in \cite{Gan-2014-BFMt-TAC} using BFM that when
the  cost function is convex then exactness of the SOCP relaxation implies 
uniqueness of the optimal 
solution for radial networks.   Hence the equivalence of BIM and BFM implies that 
any of the three types of sufficient conditions guarantees that, for a radial network with
a convex cost function, there is
a unique optimal solution and it can be computed by solving an SOCP.
Since the SDP and chordal relaxations are equivalent
to the SOCP relaxation for radial networks \cite{Bose-2014-BFMe-TAC, Low2014a}, 
these results apply to all three types of relaxations.
Empirical evidences suggest some of these conditions are likely satisfied
in practice.  This is important as most power distribution systems are radial.

These conditions are insufficient for general mesh networks because
they cannot guarantee that an optimal solution of a relaxation satisfies the
cycle condition discussed in \cite{Low2014a}.  
In Section \ref{sec:exact-mesh} we show that these conditions are however sufficient 
 for mesh networks that have tunable phase shifters at strategic locations.
  The phase shifters effectively make a mesh network behave like a radial network
 as far as convex relaxation is concerned.
 The result can help determine if a network with a given set of
phase shifters can be convexified and, if not, where additional phase shifters
are needed for convexification.  
These  conditions are also sufficient for direct current (dc) mesh
networks where all variables are in the real rather than complex domain.
Counterexamples are known where SDP relaxation is not exact, especially for
AC mesh networks without tunable phase shifters 
\cite{Lesieutre-2011-OPFSDP-Allerton, Bukhsh2013}.
We discuss three recent
approaches for global optimization of OPF when the semidefinite relaxations 
discussed in this tutorial fail.

We conclude in Section \ref{sec:conc2}.
This extended version differs from the journal version only in the addition of 
Appendix VI that proves all main results covered in this tutorial.
Even though all proofs can be found in their original papers,
we provide proofs here because (i) it is convenient to have all proofs in one place
and in a uniform notation, 
and (ii) some of the formulations and presentations here are slightly different
from those in the original papers.

\section{OPF and its relaxations}
\label{sec:opf}

We use the notations and definitions from Part I of this paper.
In this section we summarize the OPF problems and their 
relaxations developed there; see \cite{Low2014a} for details.

We adopt in this paper a strong sense of ``exactness'' where we require
the optimal solution set of the OPF problem and that of its relaxation 
 be equivalent.
This implies that an optimal solution of the 
nonconvex OPF problem can be recovered from \emph{every}  
optimal solution of its  relaxation.
This is important because it ensures any algorithm that solves an
exact relaxation always produces a globally optimal solution to the 
OPF problem.  Indeed interior point methods for solving SDPs
 tend to produce a solution matrix with a maximum rank 
\cite{Louca2013}, so can miss a rank-1 solution if
the relaxation has non-rank-1 solutions as well.  It can be difficult to
recover an optimal solution of OPF from such a non-rank-1 solution, and
our definition of exactness avoids this complication.
See Section \ref{subsec:exact} for detailed justifications.

\subsection{Bus injection model}

The BIM  adopts an undirected graph $G$ \footnote{We will
use ``bus'' and ``node'' interchangeably and ``line'' and ``link'' interchangeably.}
and can be formulated in terms of just the complex voltage vector $V \in \mathbb C^{n+1}$.
The feasible set is described by the following constraints:
\begin{subequations}
\bq
\quad
\underline{s}_j  \ \ \leq  \sum_{k: (j, k) \in E} y_{jk}^H\, V_j (V_j^H - V_k^H)  \ \ 
\leq \ \ \overline{s}_j, 
\qquad  j \in N^+
\label{eq:bimopf.1}
\\
\qquad\qquad
\underline{v}_j \ \leq \ \  |V_j|^2 \ \  \leq\  \overline{v}_j, \qquad\qquad j \in N^+
  \label{eq:bimopf.2}
\eq
\label{eq:bimopf}
\end{subequations}
where $\underline{s}_j, \overline{s}_j, \underline{v}_j, \overline{v}_j$, possibly
$\pm\infty \pm \ii\infty$, are given 
bounds on power injections and voltage magnitudes.  
Note that the vector $V$ includes $V_0$ which is assumed given 
($\underline v_0 = \overline v_0$ and $\angle V_0=0^\circ$) 
unless otherwise specified.
The problem of interest is:
\\
\noindent
\textbf{OPF:}
\bq
\label{eq:OPFbim}
\underset{V \in\mathbb C^{n+1}}{\text{min}}   \ C(V) \ & \text{subject to} &   
		 V \text{ satisfies } \eqref{eq:bimopf}
\eq

For relaxations consider the partial matrix $W_G$ defined on the network graph $G$ that satisfies
\begin{subequations}
\bq
\underline{s}_j \  \leq\!\!  \sum_{k: (j,k)\in E}\!\! y_{jk}^H\, \left( [W_G]_{jj} - [W_G]_{jk} \right) 
 \leq  \ \overline{s}_j,  \qquad j \in N^+
\label{eq:opfW.1}
\\
\underline{v}_j \ \leq \ [W_G]_{jj} \ \leq \ \overline{v}_j,  \qquad\qquad j \in N^+
\label{eq:opfW.2}
\eq
\label{eq:opfW}
\end{subequations}
We say that $W_G$ satisfies the {\em cycle condition} if for every 
cycle $c$ in $G$
\bq
\sum_{(j,k)\in c}\ \angle [W_G]_{jk} & = & 0   \ \ \mod 2\pi
\label{eq:cyclecond.2}
\eq

We assume the cost function $C$ depends on $V$ only through $VV^H$ and
use the same symbol $C$ to denote the cost in terms of a full or partial matrix.
Moreover we assume $C$ depends on the matrix only through the submatrix
$W_G$ defined on the network graph $G$.  See \cite[Section IV]{Low2014a}
for more details including the definitions of $W_{c(G)}\succeq 0$ and
$W_G(j,k)\succeq 0$.  Define the convex relaxations:
\\
\noindent
\textbf{OPF-sdp:}
\bq
\label{eq:bimOPF-sdp}
\hspace{-0.315in}
\underset{W\in \mathbb S^{n+1}}{\text{min}}   \ C(W_G) \ & \text{subject to} &   
		W_G \text{ satisfies } \eqref{eq:opfW}, \  W\succeq 0 
\eq
\noindent
\textbf{OPF-ch:}
\bq
\label{eq:bimOPF-ch}
\underset{W_{c(G)}}{\text{min}}   \ C(W_G) \ & \text{subject to} &  
		W_G \text{ satisfies } \eqref{eq:opfW}, \  W_{c(G)} \succeq 0
\eq
\noindent
\textbf{OPF-socp:}
\bq
\label{eq:bimOPF-socp}
\hspace{-0.285in}
\ \ 
\underset{W_G}{\text{min}} \  C(W_G) \ & \text{subject to} &   
		W_G \text{ satisfies } \eqref{eq:opfW},   \
		W_G(j,k) \succeq 0, \ (j,k)\in E
\eq

For BIM, we say that OPF-sdp \eqref{eq:bimOPF-sdp} is \emph{exact} if every 
optimal solution $W^\text{sdp}$ of OPF-sdp is psd rank-1; 
OPF-ch \eqref{eq:bimOPF-ch} is \emph{exact} if every optimal solution 
$W_{c(G)}^\text{ch}$ of OPF-ch is psd rank-1 (i.e., the principal submatrices 
$W_{c(G)}^\text{ch}(q)$ of $W_{c(G)}^\text{ch}$ are psd rank-1 for
all maximal cliques $q$ of the chordal extension $c(G)$ of graph $G$);  
OPF-socp \eqref{eq:bimOPF-socp} is \emph{exact} if every optimal solution $W_G^\text{socp}$ 
of OPF-socp is $2\times 2$ psd rank-1 and satisfies the cycle condition 
\eqref{eq:cyclecond.2}.
To recover an optimal solution $V^\text{opt}$ of OPF \eqref{eq:OPFbim} from 
$W^\text{sdp}$ or $W_{c(G)}^\text{ch}$ or $W_G^\text{socp}$,
see \cite[Section IV-D]{Low2014a}.

\subsection{Branch flow model}

The BFM adopts a directed graph $\tilde G$
and is defined by the following set of equations:
\begin{subequations}
\bq
\!\!\!\!
\sum_{k: j\rightarrow k} \!\!\! S_{jk}  & \!\! = \!\! &\!\!\!\!   
	\sum_{i: i\rightarrow j} \!\! \left( S_{ij} - z_{ij} |I_{ij}|^2 \right) + s_j, 
	\qquad  j \in N^+
\label{eq:bfm.3}
\\
\!\!\!\!
I_{jk}  & \!\! = \!\! &  y_{jk} (V_j - V_k),  \qquad\qquad\, j \rightarrow k \in \tilde{E}
\label{eq:bfm.1}
\\
\!\!\!\!
S_{jk}  & \!\! = \!\! & V_j \, I_{jk}^H, \qquad\qquad\qquad\ \ \, j \rightarrow k \in \tilde{E}
\label{eq:bfm.2}
\eq
\label{eq:bfm}
\end{subequations}
Denote the variables in BFM \eqref{eq:bfm} by 
$\tilde{x} := (S, I, V, s) \in \mathbb{C}^{2(m+n+1)}$.  
Note that the vectors $V$ and $s$ include $V_0$ (given) and $s_0$ respectively.
Recall from \cite{Low2014a} the variables
$x := (S, \ell, v, s) \in \mathbb R^{3(m+n+1)}$ 
that is related to $\tilde x$ by the mapping $x = h(\tilde x)$
with $\ell_{jk} := |I_{jk}|^2$ and $v_j := |V_j|^2$.
The operational constraints are:
\begin{subequations}
 \bq
\underline{v}_j \ \leq & v_j &  \leq\  \overline{v}_j, \qquad j \in N^+
  \label{eq:bfmopf.v}
 \\
 \underline{s}_j  \  \leq & s_j  & \leq \ \overline{s}_j, \qquad  j \in N^+
 \label{eq:bfmopf.s}
\eq
\label{eq:bfmopf}
\end{subequations}
We assume the cost function depends on $\tilde x$ only through
$x= h(\tilde x)$.   Then the problem in BFM is:
\\
\noindent
\textbf{OPF:}
\bq
\label{eq:OPFbfm}
\underset{\tilde x } {\text{min}}   \ \  C(x) 
& \text{subject to} &  \tilde x \text{ satisfies } \eqref{eq:bfm},  \eqref{eq:bfmopf}
\eq

For SOCP relaxation consider:
\begin{subequations}
\bq
\!\!\!\!\!
\sum_{k: j\rightarrow k} S_{jk} &\!\!\! = \!\!\! & \!\! \sum_{i: i\rightarrow j} \left( S_{ij} - z_{ij} \ell_{ij} \right)
		 + s_j, \qquad\qquad\ \ \  j \in N^+
\label{eq:mdf.1}
\\
\!\!\!\!\!
v_j - v_k &\!\!\!  = \!\!\! & 2\, \text{Re} \left(z_{jk}^H S_{jk} \right) - |z_{jk}|^2 \ell_{jk}, 
	\qquad j\rightarrow k \in \tilde E
\label{eq:mdf.2}
\\
\!\!\!\!\!
v_j \ell_{jk} &\!\!\!  \geq \!\!\! & |S_{jk}|^2, 
		\qquad\qquad\qquad\qquad\quad\ \,  j\rightarrow k \in \tilde E
\label{eq:mdf.3}
\eq
\label{eq:mdf}
\end{subequations}
%
%
%
We say that $x$ satisfies the \emph{cycle condition} if 
\bq
\!\!\!\!\!\!
\exists \theta \in \mathbb R^n & \text{such that} & B\theta  =  \beta(x)  \mod 2\pi
\label{eq:cyclecond.1}
\eq
where $B$ is the $m\times n$ reduced incidence matrix and, 
given $x := (S, \ell, v, s)$, $\beta_{jk}(x) := \angle (v_j - z_{jk}^H S_{jk})$ 
can be interpreted as the voltage angle difference across line 
$j\rightarrow k$ implied by $x$  (See \cite[Section V]{Low2014a}).
The SOCP relaxation in BFM is
\\
\noindent
\textbf{OPF-socp:}
\bq
\label{eq:bfmOPF-socp}
\underset{x } {\text{min}}   \ \  C(x) 
& \text{subject to} &  x \text{ satisfies } \eqref{eq:mdf},  \eqref{eq:bfmopf}
\eq

For BFM, OPF-socp \eqref{eq:bfmOPF-socp} in BFM
is \emph{exact} if every optimal solution $x^\text{socp}$ attains equality 
in \eqref{eq:mdf.3} and satisfies the cycle condition \eqref{eq:cyclecond.1}.
See \cite[Section V-A]{Low2014a} for how to recover an optimal solution 
$\tilde x^\text{opt}$ of OPF \eqref{eq:OPFbfm} from any optimal solution 
$x^\text{socp}$ of its SOCP relaxation.

\subsection{Exactness}
\label{subsec:exact}

The definition of exactness adopted in this paper is more stringent than needed.
Consider SOCP relaxation in BIM as an illustration (the same applies to
the other relaxations in BIM and BFM).
For any sets $A$ and $B$, we say that $A$ is \emph{equivalent to} $B$, 
denoted by $A\equiv B$, if there is a bijection between these two sets.
Let $\mathbb M(A)$ denote the set of minimizers 
when a certain function is minimized over $A$. 

Let $\mathbb V$ and $\mathbb W_G^+$ denote the feasible sets of 
OPF \eqref{eq:OPFbim} and OPF-socp \eqref{eq:bimOPF-socp}
respectively:
\bqn
\mathbb V &\!\!\!\! := \!\!\!\! & \{ V \in \mathbb C^{n+1} \ | \ V \text{ satisfies } \eqref{eq:bimopf} \}
\\
\mathbb W_G^+ & \!\!\!\! :=  \!\!\!\! & \{ W_G \ | \ W_G \text{ satisfies } \eqref{eq:opfW}, 
			 \ W_G(j,k) \succeq 0, \ (j,k)\in E \}
\eqn
Consider the following subset of $\mathbb W_G^+$:
\bqn
\mathbb W_G & := & \{ W_G \ | \ W_G \text{ satisfies } \eqref{eq:opfW}, \eqref{eq:cyclecond.2}, \ 
			 W_G(j,k) \succeq 0, 
			\, \text{rank}\, W_G(j,k) = 1, \  (j,k)\in E \}
\eqn
Our definition of exact SOCP relaxation is that  
$\mathbb M(\mathbb W_G^+) \subseteq \mathbb W_G$.
In particular, \emph{all} optimal solutions of OPF-socp must be
$2\times 2$ psd rank-1 and satisfy the cycle condition \eqref{eq:cyclecond.2}.
Since $\mathbb W_G\equiv \mathbb V$ (see \cite{Low2014a}), 
exactness requires that the set of optimal solutions of 
OPF-socp \eqref{eq:bimOPF-socp} be equivalent to that of 
OPF \eqref{eq:OPFbim}, i.e.,
$\mathbb M(\mathbb W_G^+) = \mathbb M(\mathbb W_G) \equiv \mathbb M(\mathbb V)$.

If 
$\mathbb M(\mathbb W_G^+) \supsetneq \mathbb M(\mathbb W_G)
 \equiv \mathbb M(\mathbb V)$
then OPF-socp \eqref{eq:bimOPF-socp} is not exact according to our definition.
Even in this case, however, every sufficient condition in this paper guarantees
that an optimal solution of OPF can be easily recovered from an optimal 
solution of the relaxation that is outside $\mathbb W_G$.
The difference between $\mathbb M(\mathbb W_G^+) = \mathbb M(\mathbb W_G)$
and $\mathbb M(\mathbb W_G^+) \supsetneq \mathbb M(\mathbb W_G)$ is often 
minor, depending on the objective function; 
see Remarks \ref{remark:exact1} and \ref{remark:exact2} and comments after
Theorems \ref{thm:bfmsocp.2} and \ref{thm:pfOPF}  in Section
\ref{sec:exact-radial}.  Hence we adopt the more stringent 
definition of exactness for simplicity.

\section{Radial networks}
\label{sec:exact-radial}

In this section we summarize the three types of sufficient conditions
listed in Table \ref{table:SummaryRadial}
for semidefinite relaxations of OPF to be exact for radial (tree) networks.   
These results are
important as most distribution systems are radial.   

For radial networks, if SOCP relaxation is exact then SDP and chordal 
relaxations are also exact (see \cite[Theorems 5, 9]{Low2014a}). 
We hence focus in this section on the exactness of OPF-socp in both BIM
and BFM.   Since the cycle conditions \eqref{eq:cyclecond.2} and
\eqref{eq:cyclecond.1} are vacuous for radial networks, 
OPF-socp \eqref{eq:bimOPF-socp} is exact if all of its optimal solutions 
are $2\times 2$ rank-1 and OPF-socp \eqref{eq:bfmOPF-socp} is
exact if all of its optimal solutions attain equalities in \eqref{eq:mdf.3}.
We will freely use either BIM or BFM in discussing these results.
To avoid triviality we make the following assumption throughout the paper:
\bee
\item[ ]  The voltage lower bounds satisfy $\underline{v}_j>0$, $j\in N^+$.
The original problems OPF \eqref{eq:OPFbim} and \eqref{eq:OPFbfm}
are feasible.
\eee

\subsection{Linear separability}
\label{subsec:linsep}

We will first present a general result 
on the exactness of the SOCP relaxation of general QCQP and then apply it to OPF.
This result is first formulated and proved using a duality argument in \cite{Bose-2012-QCQPt}, 
generalizing the result of \cite{Bose2011}.  It is proved using a simpler argument in \cite{Sojoudi2013}.

Fix an undirected graph $G = (N^+, E)$ where $N^+:= \{0, 1, \dots, n\}$
and $E\subseteq N^+ \times N^+$.  Fix Hermitian matrices $C_l\in \mathbb S^{n+1}$, 
$l = 0, \dots, L$, defined on $G$, i.e., $[C_l]_{jk} = 0$ if $(j,k)\not\in E$.  Consider
QCQP:
\begin{subequations}
\bq
\underset{x \in \mathbb{C}^{n+1}}{\text{min}}  & \!\!\!\! & \!\!\!\!   x^{{H}} C_0  x
\\
\text{subject to}
& \!\!\!\!  & \!\!\!\!   x^{{H}} C_l x  \leq b_l, \  \ l =1, \dots, L 
\eq
\label{eq:qcqp}
\end{subequations}
where $C_0, C_l \in \mathbb C^{(n+1)\times (n+1)}$, $b_l\in \mathbb R$, $l = 1, \dots, L$,
and its SOCP relaxation where the optimization variable ranges over Hermitian
partial matrices $W_G$:
\begin{subequations}
\bq
\underset{W_G}{\text{min}}  & \!\!\!\!\! & \!\!\!\!\! \text{tr } C_0 W_G
\\
\text{subject to}
& \!\!\!\!\!  &   \!\!\!\!\!   \text{tr } C_l W_G   \leq b_l, \ \  l = 1, \dots, L
\\
& \!\!\!\!\!  &   \!\!\!\!\!   W_G(j,k) \succeq 0, \ \  (j,k)\in E
\eq
\label{eq:socp}
\end{subequations}
%
The following result is proved in \cite{Bose-2012-QCQPt, Sojoudi2013}.  It can be regarded
as an extension of \cite{KimKojima2003} on the SOCP relaxation of QCQP from the
real domain to the complex domain. 
Consider: \footnote{All 
angles should be interpreted as ``mod $2\pi$'', i.e., projected onto $(-\pi, \pi]$.}
\begin{enumerate}
\item [A1:] The cost matrix $C_0$ is positive definite.
\item [A2:] For each link $(j,k) \in E$ there exists an $\alpha_{jk}$ such that 
	$\angle \left[ C_l \right]_{jk}  \in  [\alpha_{ij}, \alpha_{ij} + \pi]$ for all $l = 0, \dots, L$.
\end{enumerate}
Let $C^\text{opt}$ and $C^\text{socp}$ denote the optimal values of QCQP \eqref{eq:qcqp} and
SOCP \eqref{eq:socp} respectively.
\begin{theorem} 
\label{thm:exact4qcqp}
Suppose $G$ is a tree and A2 holds.  Then $C^\text{opt} = C^\text{socp}$
and an optimal solution of QCQP \eqref{eq:qcqp} can be recovered from
every optimal solution of SOCP \eqref{eq:socp}.
\end{theorem}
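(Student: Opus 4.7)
The plan is to prove both claims in one shot by constructing, from any optimal SOCP solution $W_G^{\text{socp}}$, a QCQP-feasible vector $x\in\mathbb C^{n+1}$ with $x^H C_0 x \le \text{tr}(C_0 W_G^{\text{socp}})=C^{\text{socp}}$. Since the SOCP is a relaxation of the QCQP we already have $C^{\text{socp}}\le C^{\text{opt}}$, so producing such an $x$ simultaneously yields $C^{\text{opt}}=C^{\text{socp}}$ and displays the recovered QCQP optimum.

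Abbreviate $w_j:=[W_G^{\text{socp}}]_{jj}$ and $u_{jk}:=[W_G^{\text{socp}}]_{jk}$; the $2\times 2$ PSD constraints give $|u_{jk}|\le\sqrt{w_j w_k}$. The first main step is a per-edge ``phase--saturate'' lemma: under A2 there exists $u^*_{jk}\in\mathbb C$ with $|u^*_{jk}|=\sqrt{w_j w_k}$ such that
\[
\text{Re}\bigl([C_l]_{jk}^H\,u^*_{jk}\bigr)\ \leq\ \text{Re}\bigl([C_l]_{jk}^H\,u_{jk}\bigr),\qquad l=0,1,\ldots,L.
\]
Setting $\Delta:=u^*_{jk}-u_{jk}$, these inequalities are equivalent to $\text{Re}(e^{-\ii\angle[C_l]_{jk}}\Delta)\le 0$ for every $l$, i.e.\ $\angle\Delta$ lies in $\bigcap_l[\angle[C_l]_{jk}+\pi/2,\,\angle[C_l]_{jk}+3\pi/2]$. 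Under A2 this intersection contains the common direction $\alpha_{jk}+3\pi/2$; and since $u_{jk}$ lies in the closed disk of radius $\sqrt{w_j w_k}$, the ray emanating from $u_{jk}$ in this fixed direction must cross the bounding circle, furnishing the required $u^*_{jk}$. This is the main obstacle and is precisely where the half-plane hypothesis A2 is essential: without it the intersection could be empty and no uniformly safe phase would exist.

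The second main step uses the tree structure of $G$ to realise the chosen off-diagonals as a rank-one matrix $xx^H$. Root $G$ at node $0$ and set $x_0:=\sqrt{w_0}$; traverse the tree in BFS order, and whenever $j$ is the parent of $k$ set $x_k:=\sqrt{w_k}\,e^{\ii\theta_k}$ with $\theta_k:=\theta_j-\angle u^*_{jk}$. Because $G$ has no cycles this recursion is consistent, and by construction $|x_j|^2=w_j$ and $x_j\bar x_k=u^*_{jk}$ on every edge. Since each $C_l$ is supported on $G$,
\[
x^H C_l x\ =\ \sum_j[C_l]_{jj}\,|x_j|^2 \ +\ 2\!\!\sum_{(j,k)\in E,\,j<k}\!\!\text{Re}\bigl([C_l]_{jk}^H\, x_j\bar x_k\bigr),
\]
and the edgewise inequality from step one upgrades to $x^H C_l x\le \text{tr}(C_l W_G^{\text{socp}})$. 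Applied to $l\ge 1$ this gives $x^H C_l x\le b_l$, so $x$ is QCQP-feasible; applied to $l=0$ it gives $x^H C_0 x\le C^{\text{socp}}$. Combined with $C^{\text{socp}}\le C^{\text{opt}}\le x^H C_0 x$ this yields $C^{\text{opt}}=C^{\text{socp}}$ and exhibits $x$ as the recovered QCQP optimum.
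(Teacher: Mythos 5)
Your proposal is correct and is essentially the paper's own argument: the ``phase--saturate'' direction $\alpha_{jk}+3\pi/2$ is exactly the paper's shift $r_{jk}e^{-\ii(\pi/2-\alpha_{jk})}$ of the off-diagonal entries onto the circle $|u_{jk}|=\sqrt{w_jw_k}$, the cosine inequality from A2 is the same, and the tree traversal assigning $\angle x_k=\angle x_j-\angle u^*_{jk}$ is identical. The only (cosmetic) differences are that you argue the ray meets the bounding circle geometrically where the paper solves the quadratic $r_{jk}^2+2br_{jk}-c=0$ explicitly, and you treat the already-rank-1 and non-rank-1 edges uniformly rather than as separate cases.
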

\vspace{0.1in}

\begin{remark}
\label{remark:exact1}
The proof of Theorem \ref{thm:exact4qcqp} prescribes a simple procedure to recover an
optimal solution of QCQP \eqref{eq:qcqp} from any optimal solution of its SOCP 
relaxation \eqref{eq:socp}.   The construction does not need the optimal solution of
SOCP \eqref{eq:socp} to be $2\times 2$ rank-1.  Hence 
the SOCP relaxation may not be exact according to our definition of exactness, 
i.e., some optimal
solutions of \eqref{eq:socp} may be $2\times 2$ psd but not $2\times 2$ rank-1.
If the objective function is strictly convex however then the optimal solution sets
of QCQP \eqref{eq:qcqp} and SOCP \eqref{eq:socp} are indeed equivalent.
\end{remark}
\begin{corollary} 
\label{coro:exact4qcqp}
Suppose $G$ is a tree and A1--A2 hold.  Then SOCP \eqref{eq:socp} is exact.
\end{corollary}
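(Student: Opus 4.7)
The plan is to deduce the corollary from Theorem \ref{thm:exact4qcqp} by using A1 to promote ``some SOCP optimum is rank-1'' into ``every SOCP optimum is rank-1.'' First I would take an arbitrary SOCP-optimal partial matrix $W_G^{\text{socp}}$ and invoke Theorem \ref{thm:exact4qcqp} (which only requires A2 and that $G$ be a tree) to obtain a QCQP-feasible $x^{\text{opt}} \in \mathbb{C}^{n+1}$ with
\[
   (x^{\text{opt}})^H C_0 x^{\text{opt}} \ = \ C^{\text{opt}} \ = \ C^{\text{socp}} \ = \ \mathrm{tr}\, C_0 W_G^{\text{socp}}.
\]
Its rank-1 lift $\hat W := x^{\text{opt}} (x^{\text{opt}})^H$, restricted to $G$, is SOCP-feasible (trivially, since any rank-1 lift of a QCQP-feasible vector satisfies all $\mathrm{tr}\, C_l W_G \le b_l$ and has rank-1 psd $2\times 2$ blocks) and attains the SOCP optimum. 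So both $W_G^{\text{socp}}$ and $\hat W_G$ lie in the SOCP optimum set.

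Next I would exploit A1. Because $C_0 \succ 0$, the map $x \mapsto x^H C_0 x$ is strictly convex on $\mathbb{C}^{n+1}$, and since the QCQP feasible set is invariant under the global phase symmetry $x \mapsto e^{i\varphi} x$, every QCQP optimum is a phase rotation of $x^{\text{opt}}$. In particular the rank-1 lift $\hat W = x^{\text{opt}}(x^{\text{opt}})^H$ is uniquely determined, and so is its restriction $\hat W_G$. This pins down a canonical rank-1 SOCP-optimal matrix.

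The remaining and main obstacle is to show $W_G^{\text{socp}} = \hat W_G$, i.e., that an arbitrary SOCP optimum must itself be rank-1 on every edge. My plan is to trace through the recovery procedure of Theorem \ref{thm:exact4qcqp}, which for a tree sets $|x_j^{\text{opt}}|^2 = [W_G^{\text{socp}}]_{jj}$ and propagates phases along the tree by $\arg x_j^{\text{opt}} - \arg x_k^{\text{opt}} = \arg [W_G^{\text{socp}}]_{jk}$. Matching diagonals is then automatic, so only the off-diagonal magnitudes need attention. Writing out $\mathrm{tr}\, C_0 W_G^{\text{socp}} = (x^{\text{opt}})^H C_0 x^{\text{opt}}$ edge-by-edge and using the psd inequality $|[W_G^{\text{socp}}]_{jk}| \le \sqrt{[W_G^{\text{socp}}]_{jj}[W_G^{\text{socp}}]_{kk}} = |\hat W_{jk}|$, A2 together with the optimal choice of $\arg [W_G^{\text{socp}}]_{jk}$ forces the off-diagonal inequality to be tight whenever $[C_0]_{jk} \ne 0$. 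To finish, I would close any residual slack by a convex-combination argument: for $\lambda \in [0,1]$ the matrix $W_\lambda := \lambda W_G^{\text{socp}} + (1-\lambda)\hat W_G$ remains SOCP-optimal, its recovery yields a QCQP optimum, and uniqueness of the QCQP optimum (up to phase) under A1 then forces $W_G^{\text{socp}} = \hat W_G$. This last matching-of-off-diagonals step is the step I expect to be the most delicate, since a purely linear SOCP objective cannot by itself enforce rank-1 structure — A1 has to be combined with the tree-based recovery and A2 to do so.
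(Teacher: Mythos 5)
Your strategy---recover a rank-1 optimum via Theorem \ref{thm:exact4qcqp} and then use A1 to force every SOCP optimum to coincide with it---is not the paper's route, and as written it has two genuine gaps. First, the uniqueness claim for the QCQP does not follow: strict convexity of $x\mapsto x^H C_0 x$ gives a unique minimizer only over a \emph{convex} feasible set, whereas the QCQP feasible set $\{x: x^H C_l x\le b_l\}$ is nonconvex; phase invariance does not rescue this (minimizing $\|x\|^2$ over $\|x\|^2\ge 1$ in $\mathbb C^2$ has the entire unit sphere, not a single phase orbit, as its optimum set). Second, the step you flag as delicate does not close. The SOCP objective is linear in $W_G$, and the cost change under the rank-1-ification in the theorem's proof is $2\sum_{j<k}\left|[C_0]_{jk}\right| r_{jk}\cos\left(\angle[C_0]_{jk}+\tfrac{\pi}{2}-\alpha_{jk}\right)$, which A2 only guarantees is $\le 0$; it vanishes whenever $\angle[C_0]_{jk}$ lies on the separating line, so $[C_0]_{jk}\neq 0$ does not force $\left|[W_G^{\text{socp}}]_{jk}\right|$ to be maximal edge by edge. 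Your convex-combination fallback also fails: the recovery map first \emph{inflates} the off-diagonal entries of $W_\lambda$ to make it $2\times 2$ rank-1 and then reads off only the diagonal and the inflated angles, so knowing the recovered vector equals $x^{\text{opt}}$ up to phase constrains neither $\left|[W_\lambda]_{jk}\right|$ nor $\angle[W_\lambda]_{jk}$, and cannot yield $W_G^{\text{socp}}=\hat W_G$.

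The paper's proof never touches the QCQP optimum set. It works entirely at the SOCP level: A1 is invoked to assert that SOCP \eqref{eq:socp} has a \emph{unique} optimal solution, and then one observes that if an optimum $W_G$ satisfied $[W_G]_{jj}[W_G]_{kk}>\left|[W_G]_{jk}\right|^2$ on some edge, the construction from the proof of Theorem \ref{thm:exact4qcqp} would produce a distinct feasible $\hat W_G$ (it perturbs $[W_G]_{jk}$ by $r_{jk}e^{-\ii(\pi/2-\alpha_{jk})}$ with $r_{jk}>0$) of equal cost, contradicting uniqueness. So the missing ingredient in your argument is a uniqueness statement for the \emph{relaxation}, not for the QCQP. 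You are right to be suspicious that a linear-in-$W_G$ objective cannot by itself enforce rank-1 structure; that observation is exactly why the whole weight of the corollary rests on justifying uniqueness of the SOCP optimum under A1, which your proposal never establishes.
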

\vspace{0.1in}

We now apply Theorem \ref{thm:exact4qcqp} to our OPF problem.
Recall that OPF \eqref{eq:OPFbim} in BIM can be written as a standard 
form QCQP \cite{Bose-2012-QCQPt}:
\begin{subequations}
\bq
\!\!\!\!\!\!    \!\!\!\!\!\!\!
\min_{x\in \mathbb C^n} & \!\!\!\! & \!\!\!\!  V^H C_0 V
\nonumber
\\
\!\!\!\!\!\!       \!\!\!\!\!\!\!
\text{s.t.}  & \!\!\!\! & \!\!\!\!
 	V^H \Phi_j V \leq \overline{p}_j, \  	V^H (-\Phi_j) V \leq -\underline{p}_j
\label{eq:bimOPF.b2}
\\
\!\!\!\!\!\!       \!\!\!\!\!\!\!   & \!\!\!\!  & \!\!\!\!
 		V^H \Psi_j V \leq \overline{q}_j, \ V^H (-\Psi_j) V \leq -\underline{q}_j
\label{eq:bimOPF.b3}
\\
\!\!\!\!\!\!       \!\!\!\!\!\!\!   & \!\!\!\!  & \!\!\!\!
		V^H J_j V \leq -\overline{v}_j, \ V^H (- J_j) V \leq -\underline{v}_j
\nonumber
\eq
\label{eq:OPFbim.2}
\end{subequations}
for some Hermitian matrices $C_0, \Phi_j, \Psi_j, J_j$ where $j\in N^+$.
A2 depends only on the off-diagonal entries of $C_0$, $\Phi_j$, $\Psi_j$ ($J_j$ are
diagonal matrices).    It implies a simple pattern
on the power injection constraints \eqref{eq:bimOPF.b2}--\eqref{eq:bimOPF.b3}.
Let $y_{jk} = g_{jk} - \ii b_{jk}$ with $g_{jk}>0, b_{jk}>0$.  
Then we have (from  \cite{Bose-2012-QCQPt}):   
\bqn
[\Phi_k]_{ij} &\!\!\!\! = \!\!\!\! &  \begin{cases}
		\frac{1}{2} Y_{ij}  \, =\, - \frac{1}{2} (g_{ij} - \ii b_{ij}) &  \text{ if }k = i\\
		\frac{1}{2} {Y}_{ij}^H  \,=\, - \frac{1}{2} (g_{ij} + \ii b_{ij}) &  \text{ if }k = j \\
		0 & \text{ if } k\not\in \{ i, j \}
		\end{cases}
\eqn
\bqn
[\Psi_k]_{ij} &\!\!\!\!  =  \!\!\!\!  &  \begin{cases}
		\frac{-1}{2\ii} Y_{ij}  \,=\, - \frac{1}{2}(b_{ij} + \ii g_{ij})  &  \text{if }k = i \\
		\frac{1}{2\ii} {Y}_{ij}^H \,=\, - \frac{1}{2} (b_{ij} - \ii g_{ij})  &  \text{if }k = j \\
		0 & \text{if } k\not\in \{ i, j\}
		\end{cases}
\eqn
Hence for each line $(j, k)\in E$ the relevant angles for A2 are those of $[C_0]_{jk}$
and 
\bqn
\left[\Phi_j\right]_{jk}  & = & - \frac{1}{2} (g_{jk} - \ii b_{jk})
\eqn
\bqn
[\Phi_k]_{jk} & = & - \frac{1}{2} (g_{jk} + \ii b_{jk}) 
\eqn
\bqn
[\Psi_j]_{jk}  & = &  - \frac{1}{2} (b_{jk} + \ii g_{jk})
\eqn
\bqn
[\Psi_k]_{jk} & = &  - \frac{1}{2} (b_{jk} - \ii g_{jk}) 
\eqn
as well as the angles of  $-[\Phi_j]_{jk}, -[\Phi_k]_{jk}$ and $-[\Psi_j]_{jk}, -[\Psi_k]_{jk}$.
These quantities are shown in Figure \ref{fig:SeparatingLine} with their magnitudes 
{normalized } to a common value and explained in the caption of the figure.
\begin{figure}[htbp]
\centering
\includegraphics[width=0.55\textwidth]{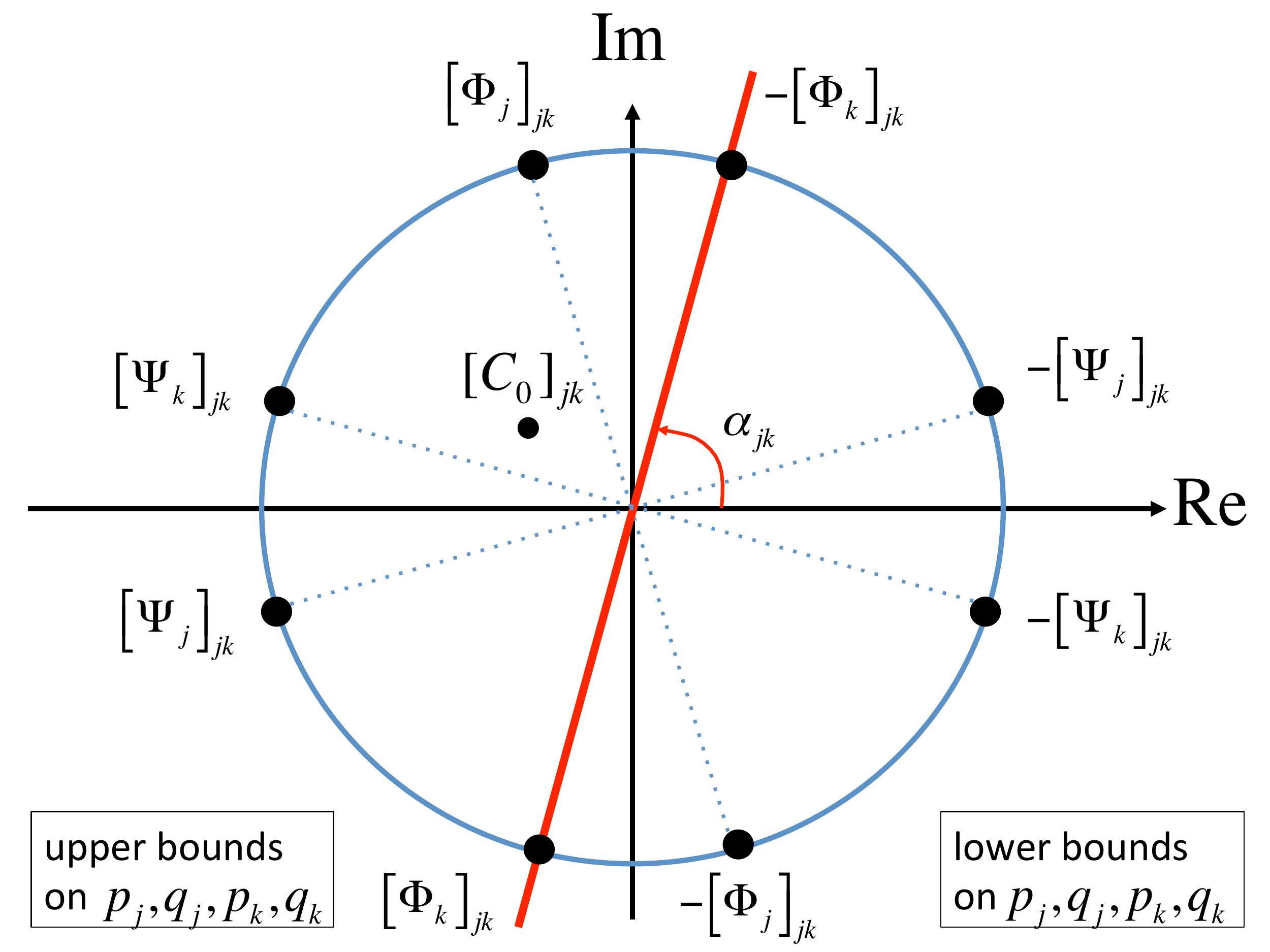}
\caption{Condition A2' on a line $(j,k)\in E$.  
The quantities $([\Phi_j]_{jk}, [\Phi_k]_{jk}, [\Psi_j]_{jk}, [\Psi_k]_{jk})$ on the left-half 
plane correspond to finite upper bounds on $(p_j, p_k, q_j, q_k)$ 
in \eqref{eq:bimOPF.b2}--\eqref{eq:bimOPF.b3};  
$(-[\Phi_j]_{jk}, -[\Phi_k]_{jk}, -[\Psi_j]_{jk}, -[\Psi_k]_{jk})$ on the right-half plane 
correspond to finite lower bounds on $(p_j, p_k, q_j, q_k)$.
A2' is satisfied if there is a line through the origin, specified by the angle $\alpha_{jk}$,
so that the quantities corresponding
to \emph{finite} upper or lower bounds on $(p_j, p_k, q_j, q_k)$
lie on one side of the line, possibly on the line itself.   The load over-satisfaction condition in 
\cite{Bose2011, Sojoudi2012PES} 
corresponds to the Im-axis that excludes all quantities
on the right-half plane.
The sufficient condition in \cite[Theorem 2]{Zhang2013}
corresponds to the red line in the figure that allows a finite lower 
bound on the real power at one end of the line, i.e., $p_j$ or $p_k$ but not both,
and no finite lower bounds on reactive powers $q_j$ and $q_k$.
}
\label{fig:SeparatingLine}
\end{figure}

Condition A2 applied to OPF \eqref{eq:OPFbim.2} takes the following form
(see Figure \ref{fig:SeparatingLine}):
\bee
\item [A2':] For each link $(j,k) \in E$ there is a line in the complex plane through the origin 
	such that $\left[ C_0 \right]_{jk}$ as well as those $\pm [\Phi_i]_{jk}$ and $\pm [\Psi_i]_{jk}$
	corresponding to \emph{finite} lower or upper bounds on $(p_i, q_i)$, for $i=j, k$, 
	are all on one side of the line, possibly on the line itself.
\eee
Let $C^\text{opt}$ and $C^\text{socp}$ denote the optimal values of OPF 
\eqref{eq:OPFbim} and OPF-socp \eqref{eq:bimOPF-socp} respectively.
\begin{corollary}
\label{coro: OPFSeparatingLine}
Suppose $G$ is a tree and A2' holds.  
\bee
\item  $C^\text{opt} = C^\text{socp}$.  Moreover 
	 an optimal solution $V^\text{opt}$ of OPF \eqref{eq:OPFbim} can be recovered from
	every optimal solution $W_G^\text{socp}$ of OPF-socp \eqref{eq:bimOPF-socp}.
\item If, in addition, A1 holds then OPF-socp \eqref{eq:bimOPF-socp} is exact.
\eee
\end{corollary}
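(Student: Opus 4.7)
The plan is to reduce the corollary directly to Theorem \ref{thm:exact4qcqp} and Corollary \ref{coro:exact4qcqp} by recognizing that OPF \eqref{eq:OPFbim} in its standard QCQP form \eqref{eq:OPFbim.2} is already an instance of the QCQP \eqref{eq:qcqp} considered there, and that condition A2' is exactly the specialization of A2 to this QCQP. Once this identification is made, both claims of the corollary follow immediately.

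First I would enumerate the constraint matrices $C_l$ that appear in the QCQP formulation \eqref{eq:OPFbim.2}. For each bus $j\in N^+$, the constraints contribute the matrices $\pm\Phi_j$ (real power bounds), $\pm\Psi_j$ (reactive power bounds), and $\pm J_j$ (voltage magnitude bounds), with a given matrix included in the list of active $C_l$'s if and only if the corresponding bound is finite. I would then note the crucial structural fact that $J_j$ is diagonal, so $[J_j]_{ik}=0$ for every off-diagonal entry $(i,k)\in E$; consequently the voltage constraints play no role in the angle-separation requirement A2. This means that for a link $(j,k)\in E$, the entries of the constraint matrices whose angles must be checked are exactly $[C_0]_{jk}$ together with $\pm[\Phi_i]_{jk}$ and $\pm[\Psi_i]_{jk}$ for $i\in\{j,k\}$, restricted to those corresponding to finite power-injection bounds.

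Next I would translate the geometric requirement of A2: there is an $\alpha_{jk}$ such that every relevant $\angle[C_l]_{jk}$ lies in the half-open arc $[\alpha_{jk},\alpha_{jk}+\pi]$. This is precisely the statement that all such off-diagonal entries lie on one side (including the boundary) of a line through the origin in the complex plane whose angle is $\alpha_{jk}$; with the identification above, this is exactly A2' as depicted in Figure \ref{fig:SeparatingLine}. Because $G$ is a tree, the hypotheses of Theorem \ref{thm:exact4qcqp} are satisfied, which yields $C^\text{opt}=C^\text{socp}$ and the explicit recovery of an optimal $V^\text{opt}$ from every optimal $W_G^\text{socp}$; this proves part (i). Adding A1 puts us in the setting of Corollary \ref{coro:exact4qcqp}, which gives exactness of OPF-socp \eqref{eq:bimOPF-socp} in the stringent sense adopted in Section \ref{subsec:exact}, proving part (ii).

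The only nontrivial step, and the one I would be most careful about, is the bookkeeping in matching A2 with A2': one must ensure that (a) only constraint matrices corresponding to finite bounds are required to meet the angle condition, consistent with how the QCQP \eqref{eq:qcqp} is defined, and (b) the two separate upper and lower bounds on each of $p_j,q_j$ contribute matrices differing by a sign, which is why A2' mentions both $\pm[\Phi_i]_{jk}$ and $\pm[\Psi_i]_{jk}$. Once this correspondence is verified, there is no further computation: the corollary is a direct specialization of Theorem \ref{thm:exact4qcqp} and Corollary \ref{coro:exact4qcqp}.
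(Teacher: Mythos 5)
Your proposal is correct and follows essentially the same route the paper takes: the corollary is obtained by writing OPF as the QCQP \eqref{eq:OPFbim.2}, computing the off-diagonal entries of $\Phi_j,\Psi_j$ (and noting $J_j$ is diagonal so voltage bounds are irrelevant to the angle condition), and observing that A2' is exactly A2 specialized to this instance, after which Theorem \ref{thm:exact4qcqp} and Corollary \ref{coro:exact4qcqp} apply directly. Your bookkeeping on finite versus infinite bounds and on the $\pm$ pairs matches the paper's treatment.
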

\vspace{0.1in}

It is clear from Figure \ref{fig:SeparatingLine} that condition A2' cannot be
satisfied if there is a line where both the real and reactive power injections
at both ends are both lower and upper bounded (8 combinations as shown
in the figure).
A2' requires that some of them be unconstrained even though in practice they are
always bounded.  It should be interpreted as requiring that the optimal solutions obtained 
by ignoring these bounds  turn out to satisfy these bounds.  This is generally
different from solving the optimization \emph{with} these constraints but requiring that they be
inactive (strictly within these bounds) at optimality, unless the cost function is strictly convex.
The result proved in \cite{Bose-2012-QCQPt} also includes constraints on real branch
power flows and line losses.
Corollary \ref{coro: OPFSeparatingLine}
 includes several sufficient conditions in the literature for exact relaxation as special cases; see
 the caption of Figure \ref{fig:SeparatingLine}.

Corollary \ref{coro: OPFSeparatingLine} also implies a result first proved in 
\cite{Farivar2011-VAR-SGC}, using a different technique, that SOCP relaxation is 
exact in BFM for radial networks when there are no lower bounds on power 
injections $s_j$.  The argument in \cite{Farivar2011-VAR-SGC} is generalized in 
\cite[Part I]{Farivar-2013-BFM-TPS} to allow convex objective 
functions, shunt elements, and line limits in terms of upper bounds on $\ell_{jk}$.  
Assume 
\bee
\item[A3: ] The cost function $C(x)$ is convex, strictly increasing in $\ell$,
	nondecreasing in $s = (p, q)$, and independent of branch flows $S = (P, Q)$.
\item[A4: ] For $j\in N^+$,  $\underline{s}_j = -\infty - \ii \infty$.
\eee
Popular cost functions in the literature include active power loss over the network or
active power generations, both of which satisfy A3.
The next result is proved in \cite{Farivar2011-VAR-SGC, Farivar-2013-BFM-TPS}.
\begin{theorem} 
\label{thm:bfmsocp.1}
Suppose $\tilde G$ is a tree and A3--A4 hold.  Then OPF-socp \eqref{eq:bfmOPF-socp}
is exact.
\end{theorem}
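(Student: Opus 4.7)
The plan is a proof by contradiction. I would suppose some optimal $x^{\star}=(S^{\star},\ell^{\star},v^{\star},s^{\star})$ of OPF-socp \eqref{eq:bfmOPF-socp} has strict inequality $v_i^{\star}\ell_{ij}^{\star}>|S_{ij}^{\star}|^2$ on some edge $(i,j)\in\tilde E$, and then manufacture a feasible $x'$ with $C(x')<C(x^{\star})$. Since the cycle condition \eqref{eq:cyclecond.1} is vacuous on a tree, the resulting contradiction would establish exactness.

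My first attempt would be to shrink $\ell_{ij}^{\star}$ by a small $\delta>0$ and absorb the freed loss at bus $j$ by decreasing $s_j$: the lower bound on $s_j$ is $-\infty-\ii\infty$ by A4, and the upper bound is preserved since $s_j$ only goes down. The main obstacle will be the voltage equation \eqref{eq:mdf.2}: decreasing $\ell_{ij}$ alone lowers $v_j$ by $|z_{ij}|^2\delta$, and that drop propagates to every descendant of $j$, threatening voltage lower bounds and, worse, breaking every downstream SOCP constraint \eqref{eq:mdf.3} that happens to be tight at $x^{\star}$. Since $(i,j)$ might be chosen anywhere in the tree, controlling this cascade is the crux of the argument.

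The key step will be to perturb $S_{ij}$ simultaneously so as to freeze $v_j$. I would pick $\Delta S_{ij}$ with $2\,\mathrm{Re}(z_{ij}^H\,\Delta S_{ij})=-|z_{ij}|^2\delta$, and with both $\mathrm{Re}\,\Delta S_{ij}\le 0$ and $\mathrm{Im}\,\Delta S_{ij}\le 0$; for standard impedances $z_{ij}=r_{ij}+\ii x_{ij}$ with $r_{ij},x_{ij}>0$, the choice $\Delta S_{ij}=-z_{ij}\delta/2$ does both. Then I would set $\ell_{ij}':=\ell_{ij}^{\star}-\delta$, $S_{ij}':=S_{ij}^{\star}+\Delta S_{ij}$, $s_j':=s_j^{\star}+\Delta S_{ij}$, $s_i':=s_i^{\star}+\Delta S_{ij}$ (at the root $i=0$ this just updates the slack), and leave every other coordinate of $x^{\star}$ alone. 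Verification would check four items: (i) by the choice of $\Delta S_{ij}$, the BFM voltage equation \eqref{eq:mdf.2} on $(i,j)$ forces $v_j'=v_j^{\star}$, so every voltage and every SOCP \eqref{eq:mdf.3} on edges other than $(i,j)$ is untouched; (ii) the SOCP slack on $(i,j)$ equals $(v_i^{\star}\ell_{ij}^{\star}-|S_{ij}^{\star}|^2)+O(\delta)$ and so remains positive for small $\delta$; (iii) the upper bounds on $s_i,s_j$ hold by componentwise monotonicity of $\Delta S_{ij}$, and the lower bounds hold vacuously by A4; (iv) power balance \eqref{eq:mdf.1} at $i$ and $j$ is restored by the $s_i',s_j'$ updates, and \eqref{eq:mdf.1} at every other bus is unchanged since only $(i,j)$-data was perturbed.

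Finally, A3 would yield $C(x')<C(x^{\star})$ because the cost depends only on $(\ell,s)$, is strictly increasing in $\ell$ (so the $\ell_{ij}$ decrease strictly helps), and is nondecreasing in $s$ (so the componentwise decreases of $s_i,s_j$ cannot hurt). This contradicts optimality of $x^{\star}$, so every optimal solution of OPF-socp must satisfy $v_i\ell_{ij}=|S_{ij}|^2$ on every edge, giving exactness.
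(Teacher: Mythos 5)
Your proposal is correct and follows essentially the same route as the paper's proof: the same local perturbation $\hat\ell_{jk}=\ell_{jk}-\epsilon$, $\hat S_{jk}=S_{jk}-z_{jk}\epsilon/2$, with $s_j$ and $s_k$ each reduced by $z_{jk}\epsilon/2$ and $v$ held fixed, so that \eqref{eq:mdf.1}--\eqref{eq:mdf.2} are preserved, the slack in \eqref{eq:mdf.3} on the perturbed line stays nonnegative for small $\epsilon$, A4 absorbs the injection decreases, and A3 yields a strict cost reduction. Nothing further is needed.
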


\begin{remark}
\label{remark:exact2}
If the cost function $C(x)$ in A3 is only nondecreasing, rather than
strictly increasing, in $\ell$, then A3--A4 still guarantee that all optimal solutions of 
OPF \eqref{eq:OPFbfm} are (i.e., can be mapped to) optimal solutions of
OPF-socp \eqref{eq:bfmOPF-socp}, but OPF-socp 
may have an optimal solution that maintains strict inequalities in \eqref{eq:mdf.3}
and hence is infeasible for OPF.   Even though OPF-socp
is not exact in this case, the proof of Theorem \ref{thm:bfmsocp.1}
constructs from it an optimal solution of OPF (See the arXiv version of this paper). 
\end{remark}

\subsection{Voltage upper bounds}

While type A conditions (A2' and A4 in the last subsection) require that some
power injection constraints not be binding, type B conditions require non-binding
voltage upper bounds.   They are proved in 
\cite{Li-2012-BFMt, Gan-2012-BFMt, Gan-2013-BFMt-CDC, Gan-2014-BFMt-TAC} 
using BFM.  

For radial networks the model originally proposed in \cite{Baran1989a, Baran1989b},
which is \eqref{eq:mdf} with the inequalities in \eqref{eq:mdf.3} replaced
by equalities, is exact.  This is because 
the cycle condition \eqref{eq:cyclecond.1} is always satisfied
as the reduced incidence matrix $B$ is $n\times n$ and invertible for radial networks.
Following \cite{Gan-2014-BFMt-TAC} we adopt the graph orientation where every 
link points \emph{towards} node 0.
Then \eqref{eq:mdf} for a radial network reduces to:
\begin{subequations}
\bq
\!\!\!\!\!\!\!\!\!
{S}_{jk} & \!\!\!\! = \!\!\!\! &\!\! \sum_{i: i\rightarrow j} \!\! \left( {S}_{ij} - z_{ij} \ell_{ij} \right) + s_j, 
			\qquad\qquad\ \ \ \,  j \in N^+
\label{eq:gdf.1}
\\
\!\!\!\!\!\!\!\!\!
v_j - v_k & \!\!\!\! = \!\!\!\! & 2\, \text{Re} \left(z_{jk}^H S_{jk} \right)\! - |z_{jk}|^2 \ell_{jk}, 
			\qquad  j\rightarrow k \in \tilde E
\label{eq:gdf.2}
\\
\!\!\!\!\!\!\!\!\!
v_j \ell_{jk}   & \!\!\!\!  \geq \!\!\!\! & |S_{jk}|^2, 
			\qquad\qquad\qquad\qquad\quad\   j\rightarrow k \in \tilde E
\label{eq:gdf.3}
\eq
\label{eq:gdf}
\end{subequations}
where $v_0$ is given and  in \eqref{eq:gdf.1}, $k$  denotes the node on the unique path from node $j$ 
to node 0.   The boundary condition is: $S_{jk} := 0$ when $j=0$ in \eqref{eq:gdf.1} and 
$S_{ij} =0$, $\ell_{ij} = 0$ when $j$ is a leaf node.\footnote{A node $j\in N$ is a \emph{leaf node} if 
there is no $i$ such that $i\rightarrow j \in \tilde E$.}

As before the voltage magnitudes must satisfy:
\begin{subequations}
\bq
\underline{v}_j \ \leq & v_j &  \leq\  \overline{v}_j, \qquad j \in N
\label{eq:boxv}
\eq
We allow more general constraints on the power injections: 
for $j\in N$, $s_j$ can be in an \emph{arbitrary} set $\mathbb S_j$ that is bounded above:
\bq
s_j \ \in \ \mathbb S_j & \subseteq & \{ s_j \in \mathbb C \, |\, s_j \leq \overline{s}_j \}, 
\qquad  j\in N
\label{eq:constraints}
\eq
\label{eq:cnstrs}
\end{subequations}
for some given $\overline s_j$, $j\in N$.\footnote{We assume here that $s_0$ is 
unconstrained, and since $V_0:= 1\angle 0^\circ$ pu, the constraints \eqref{eq:cnstrs}
involve only $j$ in $N$, not $N^+$.}
Then the SOCP relaxation is
\\
\noindent
\textbf{OPF-socp:}
\bq
\underset{ x } {\text{min}}   \ \  C( x) & \text{subject to} &  
	\eqref{eq:gdf}, \eqref{eq:cnstrs}
\qquad\quad
\label{eq:SOCP-radialBFM}
\eq
As defined in Section \ref{subsec:exact},
OPF-socp \eqref{eq:SOCP-radialBFM} is  {exact} if 
every optimal solution $x^\text{socp}$ attains equality in \eqref{eq:gdf.3}.
In that case an optimal solution of BFM \eqref{eq:OPFbfm}
can be uniquely recovered from $x^\text{socp}$.

We make two comments on the constraint sets $\mathbb S_j$ in \eqref{eq:constraints}.  
First $\mathbb S_j$ need not be
convex nor even connected for convex relaxations to be exact.
They (only) need to be convex to be efficiently computable.   
Second such a general constraint on $s$ is useful in many applications.  It includes the case
where $s_j$ are subject to simple box constraints, but also allows constraints of the form
$|s_j|^2 \leq a$, $|\angle s_j| \leq \phi_j$ that is useful for volt/var control \cite{Turitsyn11},
or $q_j \in \{0, a\}$ for capacitor configurations.

\noindent
\emph{Geometric insight.}
To motivate our sufficient condition, we first explain 
a simple geometric intuition using a two-bus network on why relaxing voltage upper bounds 
guarantees exact SOCP relaxation.
Consider bus 0 and bus 1 connected by a line with impedance $z:= r+\ii x$.  Suppose
without loss of generality that $v_0=1$ pu.   Eliminating $S_{01} = s_0$
from \eqref{eq:gdf}, the model reduces to (dropping the subscript on $\ell_{01}$):
\bq
p_0 - r \ell \ = \ -p_1, \ \ \
q_0 - x \ell \ = \ -q_1, \ \ \
p_0^2 + q_0^2 \ = \ \ell
\label{eq:p0q0ell}
\eq
and
\bq
v_1 - v_0 & = & 2 (rp_0 + xq_0) - |z|^2 \ell
\label{eq:v}
\eq
Suppose $s_1$ is given (e.g., a constant power load).  Then the variables are
$(\ell, v_1, p_0, q_0)$ and the feasible set consists of solutions of \eqref{eq:p0q0ell}
and \eqref{eq:v} subject to additional constraints on $(\ell, v_1, p_0, q_0)$.   
\begin{figure}[htbp]
\centering
	\includegraphics[width=0.45\textwidth]{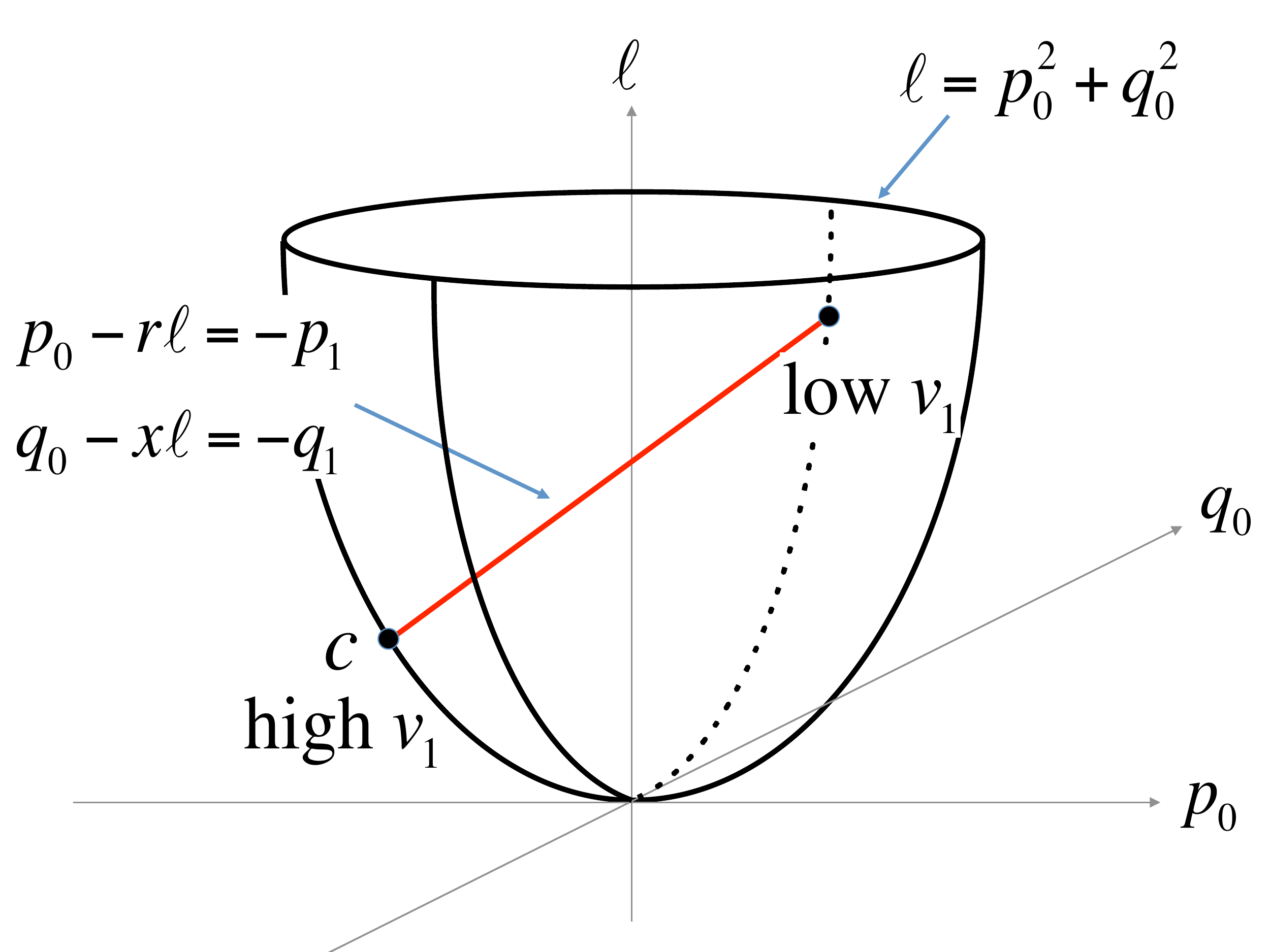}
\caption{Feasible set of OPF for a two-bus network without any constraint. 
	It consists of the (two) points of intersection of the line with 
	the convex surface (without the interior), and hence is nonconvex.  
	SOCP relaxation includes the interior of the convex surface and enlarges the feasible
	set to the line segment joining these two points.  If the cost function $C$ is increasing
	in $\ell$ or $(p_0, q_0)$ then the optimal point over the SOCP feasible set 
	(line segment) is the
	lower feasible point $c$, and hence the relaxation is exact.  No constraint
	on $\ell$ or $(p_0, q_0)$ will destroy exactness as long as the resulting feasible set
	contains $c$.  
}
\label{fig:2busNoC}
\end{figure}
The case without any constraint is instructive and shown in Figure \ref{fig:2busNoC}.
The point $c$ in the figure corresponds to a power flow solution with a
large $v_1$ (normal operation) whereas the other intersection corresponds to a 
solution with a small $v_1$ (fault condition).
As explained in the caption, SOCP relaxation is exact if there is no voltage constraint
and as long as constraints on $(\ell, p_0, q_0)$ does not remove the high-voltage
(normal) power flow solution $c$.  Only when the system is stressed to a point where
the high-voltage solution becomes infeasible will relaxation lose exactness.  This
agrees with conventional wisdom that power systems under normal operations are
well behaved.

Consider now the voltage constraint $\underline v_1 \leq v_1 \leq \overline v_1$.
Substituting \eqref{eq:p0q0ell} into \eqref{eq:v} we obtain
\bqn
v_1 & = &  (1 + rp_1 + xq_1) - |z|^2 \ell
\eqn
translating the constraint on $v_1$ into a box constraint on $\ell$:
\bqn
\frac{1}{|z|^2} \left( rp_1 + xq_1 + 1-\overline v_1 \right) \ \leq \ \ell \ \leq \
\frac{1}{|z|^2} \left( rp_1 + xq_1 + 1-\underline v_1 \right)
\eqn
Figure \ref{fig:2busNoC} shows that the lower bound $\underline v_1$
(corresponding to an upper bound on $\ell$) does not affect the exactness of SOCP 
relaxation.
The effect of upper bound $\overline v_1$ (corresponding to a lower bound on $\ell$)
is illustrated in Figure \ref{fig:2busC}.   As explained in the caption of the figure
SOCP relaxation 
is exact if the upper bound $\overline v_1$ does not exclude the high-voltage
power flow solution $c$ and is not exact otherwise.
\begin{figure}[htbp]
\centering
\subfigure [Voltage constraint not binding] {
	\includegraphics[width=0.4 \textwidth]{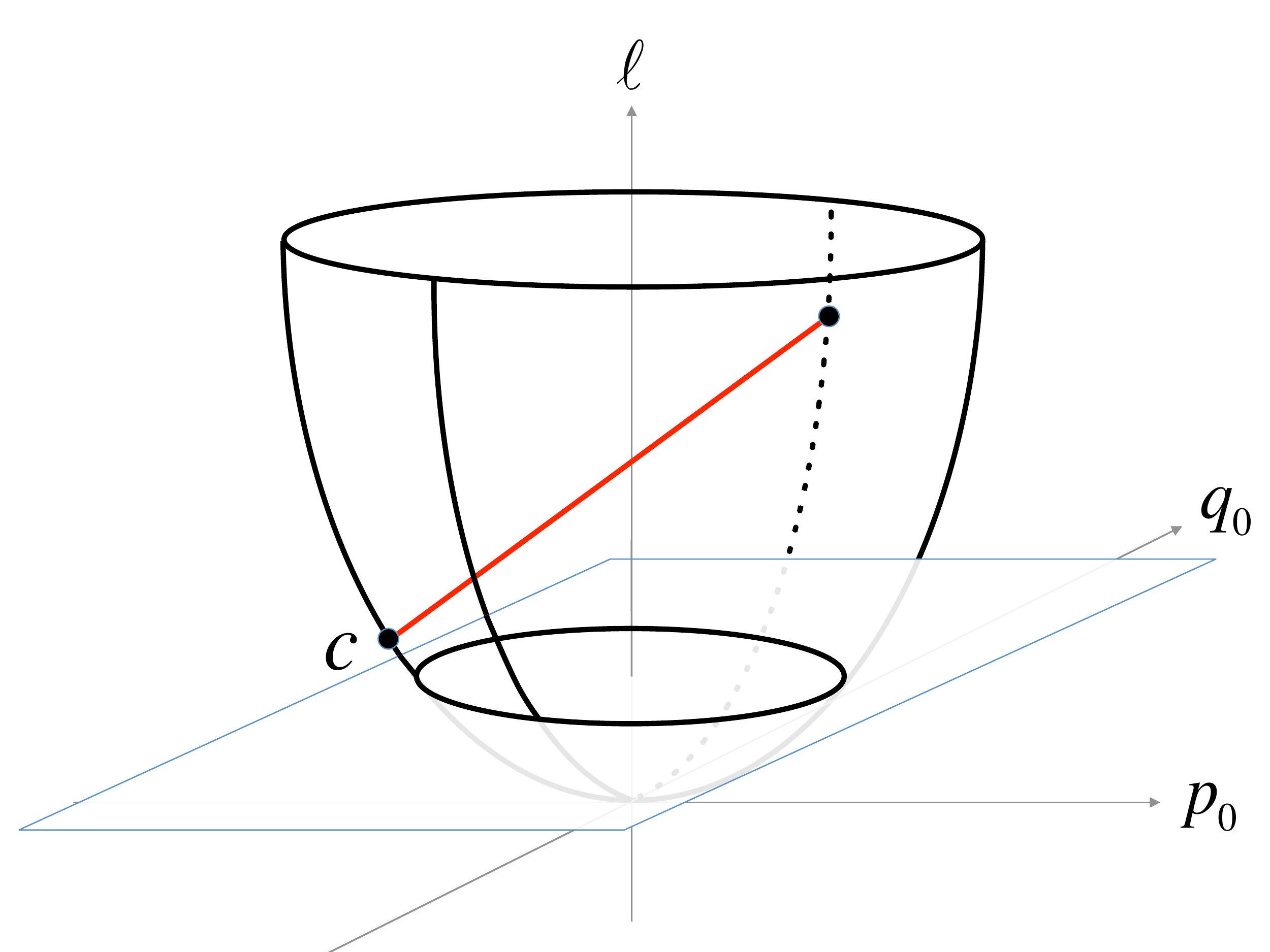}
	}
\subfigure [Voltage constraint binding] {
	\includegraphics[width=0.4 \textwidth]{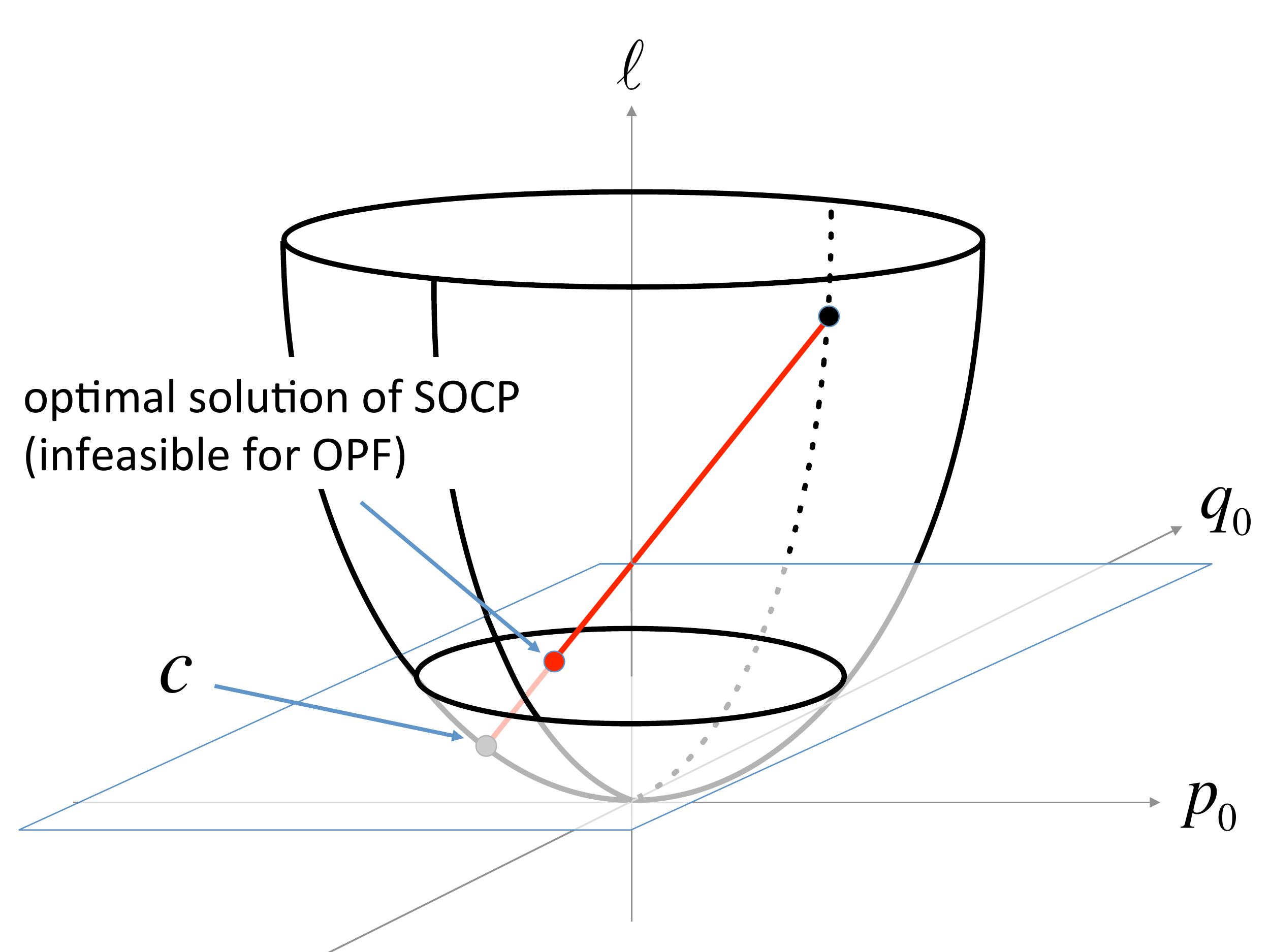}
	}
\caption{Impact of voltage upper bound $\overline v_1$ on exactness. 
(a) When $\overline v_1$ (corresponding to a lower bound on $\ell$) is
	not binding, the power flow solution $c$ is in the feasible set of SOCP and
	hence the relaxation is exact.
(b) When $\overline v_1$ excludes $c$ from the feasible set of SOCP, the
	optimal solution is infeasible for OPF and the relaxation is not exact.
}
\label{fig:2busC}
\end{figure}
\vspace{0.2in}

To state the sufficient condition for a general radial network, 
recall from \cite[Section VI]{Low2014a} the linear approximation of BFM
for radial networks obtained by setting $\ell_{jk}=0$ in \eqref{eq:gdf}: 
for each $s$
\begin{subequations}
\bq
S^{\text{lin}}_{jk}(s) & = & \sum_{i \in \mathbb T_j} s_i
\\
v^{\text{lin}}_{j}(s) & = & v_0 + 2 \sum_{(i,k) \in \mathbb P_j} 
					\text{Re} \left( z_{ik}^H  S_{ik}^{\text{lin}}(s) \right)
\eq
\label{eq:gldf}
\end{subequations}
where $\mathbb T_j$ denotes the subtree at node $j$, including $j$, and $\mathbb P_j$ denotes the
set of links on the unique path from $j$ to $0$.
The key property we will use is, from \cite[Lemma 13 and Remark 9]{Low2014a}:
\bq
S_{jk} \ \leq \ S_{jk}^\text{lin}(s) & \text{ and } & v_j \ \leq \ v_j^\text{lin}(s)
\label{eq:linbounds}
\eq
Define the $2\times 2$ matrix function
\bq
A_{jk}(S_{jk}, v_j) & := & I - \frac{2}{v_j}\, z_{jk} 
			\left( S_{jk} \right)^T
\label{eq:defA}
\eq
where $z_{jk} := [ r_{jk} \ \, x_{jk} ]^T$ is the line impedance  and $S_{jk} := [ P_{jk}\ \, Q_{jk} ]^T$
is the branch power flows,
both taken as 2-dimensional real vectors so that $z_{jk} \left( S_{jk} \right)^T$ is a $2\times 2$
matrix with rank less or equal to 1.
The matrices $A_{jk}(S_{jk}, v_j)$ describe how changes in the real and reactive power flows 
propagate towards the root node 0; see comments below. 
Evaluate the Jacobian matrix $A_{jk}(S_{jk}, v_j)$ at the boundary values:
\bq
\underline A_{jk} & := &  A_{jk} \left( \left[ S_{jk}^\text{lin}(\overline s) \right]^+\!\!, \  \underline v_j \right) 
\ \, = \ \, I - \frac{2}{\underline v_j}\, z_{jk} 
			\left( \left[ S^{\text{lin}}_{jk}(\overline s) \right]^+ \right)^T
\label{eq:defAunder}
\eq
Here $\left(\left[ a \right]^+ \right)^T$ is the row vector $\left[ [a_1]^+ \  [a_2]^+ \right]$ 
with $[a_j]^+ := \max\{0, a_j\}$.

For a radial network, for $j\neq 0$, every link $j\rightarrow k$ identifies a unique node $k$ and therefore, to simplify
notation, we refer to a link interchangeably by $(j,k)$ or $j$ and use 
$A_j$, $\underline A_j$, $z_j$ etc. in place of $A_{jk}$, $\underline A_{jk}$, $z_{jk}$ etc.
respectively.

Assume
\bee
\item[B1: ] The cost function is $C(x) := \sum_{j=0}^n C_j \left( \text{Re}\, s_j \right)$ with $C_0$ 
	strictly increasing.  There is no constraint on $s_0$.
\item[B2: ] The set $\mathbb S_j$ of injections satisfies $v_j^{\text{lin}}(s) \leq \overline{v}_j$, $j\in N$,
	where $v_j^{\text{lin}}(s)$ is given by \eqref{eq:gldf}.
\item[B3: ] For each leaf node $j\in N$ let the unique path from $j$ to 0 have $k$ links and be denoted by 
		$\mathbb P_j := ( (i_k, i_{k-1}),  \dots, (i_1, i_0) )$ with $i_k = j$ and $i_0 = 0$.
		Then $\underline A_{i_t} \cdots \underline A_{i_{t'}} \, z_{i_{t'+1}} > 0$
		for all $1 \leq t \leq t' < k$.
\eee
The following result is proved in \cite{Gan-2014-BFMt-TAC}.
\begin{theorem} 
\label{thm:bfmsocp.2}
Suppose $\tilde G$ is a tree and B1--B3 hold.  Then OPF-socp \eqref{eq:SOCP-radialBFM} is exact.
\end{theorem}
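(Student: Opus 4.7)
The plan is to prove Theorem \ref{thm:bfmsocp.2} by contradiction. Suppose some optimal solution $x^*=(S^*,\ell^*,v^*,s^*)$ of OPF-socp \eqref{eq:SOCP-radialBFM} violates equality in \eqref{eq:gdf.3} on some link, which must lie on a leaf-to-root path $\mathbb P_{i_k} = ((i_k, i_{k-1}),\ldots,(i_1, i_0))$; denote the offending link by $e = (i_{t'+1}, i_{t'})$ with $v^*_{i_{t'+1}} \ell^*_e > |S^*_e|^2$. The goal is to construct a nearby feasible $x(\epsilon)$ with $C(x(\epsilon))<C(x^*)$.

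First I would introduce a one-parameter perturbation $x(\epsilon)$, $\epsilon\geq 0$, that (i) keeps every injection $s_i^*$ fixed for $i\neq 0$, so the constraints \eqref{eq:cnstrs} remain intact; (ii) leaves the entire subtree rooted at $i_{t'+1}$ unchanged; and (iii) sets $\ell_e(\epsilon) = \ell_e^* - \epsilon$. The balance \eqref{eq:gdf.1} at $i_{t'}$ then forces $S_{e_{t'}}(\epsilon) = S_{e_{t'}}^* + z_e\epsilon$ in real 2-vector notation. I would then solve the coupled system \eqref{eq:gdf.1}--\eqref{eq:gdf.2} upward along the path, imposing the tight identity $\ell_{e_t}(\epsilon) = |S_{e_t}(\epsilon)|^2/v_{i_t}(\epsilon)$ on every upstream link; an implicit function argument based on the invertibility of the radial power-flow Jacobian at $x^*$ yields a smooth solution branch for small $\epsilon$. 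The slack injection $s_0(\epsilon)$ absorbs the imbalance via the root balance.

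Second I would verify feasibility for $\epsilon$ small enough. The inequality on $e$ stays slack by continuity of $v_{i_{t'+1}}\ell_e - |S_e|^2$; the upstream inequalities hold with equality by construction; the voltage lower bounds are preserved by continuity. The delicate constraints are the voltage upper bounds, which is where B2 enters: the bound \eqref{eq:linbounds} supplies uniform slack $v_{i_t}^* \leq v_{i_t}^\text{lin}(s^*) \leq \overline v_{i_t}$, and the $O(\epsilon)$ voltage perturbation cannot cross it. Differentiating the tight identity and the recursion \eqref{eq:gdf.1} yields a cascade of the form $\delta S_{e_t} = A_{i_{t+1}}(\cdot)\cdots A_{i_{t'}}(\cdot)\, z_e\,\epsilon + o(\epsilon)$, where the Jacobians $A_j$ are those of \eqref{eq:defA}. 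By \eqref{eq:linbounds} and $\underline v \leq v^*$, these actual Jacobians are dominated entrywise by their boundary counterparts $\underline A_j$, so the hypothesis $\underline A_{i_{t+1}}\cdots \underline A_{i_{t'}}\, z_e > 0$ of B3 controls the sign of $\delta S$ along the entire upstream path, which in turn controls the sign of $\delta v$ through \eqref{eq:gdf.2}.

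Third I would show the cost strictly decreases. Summing the real part of \eqref{eq:gdf.1} over all nodes gives $\text{Re}\, s_0 = -\sum_{j\neq 0}\text{Re}\, s_j + \sum_{(j,k)} r_{jk}\ell_{jk}$, so a change in $\text{Re}\, s_0(\epsilon)$ equals the change in total real losses. The direct contribution from shrinking $\ell_e$ is $-r_e\epsilon<0$, while the upstream loss changes are $O(\epsilon)$ with signs controlled by B3; for $\epsilon$ small enough the direct term dominates, so $\text{Re}\, s_0(\epsilon) < \text{Re}\, s_0^*$. By B1 the cost reduces to $C_0(\text{Re}\, s_0(\epsilon))$ up to constants, and $C_0$ is strictly increasing, yielding $C(x(\epsilon))<C(x^*)$ --- the desired contradiction. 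The main obstacle is the feasibility verification in step two: the nonlinear coupling of $(S,\ell,v)$ along the upstream path means a local perturbation cascades, and only the structural positivity in B3, together with the voltage slack afforded by B2, rules out a sign reversal that would either push a voltage above $\overline v_j$ or invert the net loss change.
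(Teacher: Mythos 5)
Your overall strategy --- contradiction, shrink $\ell$ on the offending link, propagate the change up the path to the root, use B2 for the voltage upper bounds and B3 for sign control, and conclude via B1 that $s_0$ strictly decreases --- is the same as the paper's. But three steps that you treat as routine are exactly where the paper has to work, and as written they do not go through. First, your construction imposes the tight identity $\ell_{e_t}(\epsilon) = |S_{e_t}(\epsilon)|^2/v_{i_t}(\epsilon)$ with the \emph{perturbed} voltages and then appeals to an implicit function argument requiring ``invertibility of the radial power-flow Jacobian at $x^*$''; no such nondegeneracy is assumed in the theorem. The paper sidesteps this entirely by an explicit (not asymptotic) construction that sets $\hat\ell_j := |\hat S_j|^2/v_j$ with the \emph{old} voltages in the denominator, which decouples the $(\hat S,\hat\ell)$ recursion (toward the root) from the $\hat v$ recursion (away from the root); feasibility $\hat v_j\hat\ell_j \geq |\hat S_j|^2$ is then recovered a posteriori from $\hat v_j \geq v_j$. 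Relatedly, you must take the offending link to be the one \emph{closest to bus 0}: if some strictly upstream link also has a slack inequality, then ``imposing the tight identity'' on it is not a small perturbation of $x^*$ at $\epsilon=0$ and your continuity arguments collapse.

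Second, and most seriously, the claim that B3 ``controls the sign of $\delta S$'' because the actual Jacobians are ``dominated entrywise by their boundary counterparts'' is both reversed (the paper shows $\tilde A_t - \underline A_t = 2\,z_t\,\delta_t^T$ with $\delta_t\geq 0$, so $\tilde A_t \geq \underline A_t$) and, more importantly, insufficient: these matrices have negative entries, so entrywise ordering of the factors does not imply that $\tilde A_{i_{t+1}}\cdots \tilde A_{i_{t'}}z_e > 0$ follows from $\underline A_{i_{t+1}}\cdots \underline A_{i_{t'}}z_e > 0$. Establishing this implication is the technical core of the proof; the paper does it by exploiting the rank-one structure of $\tilde A_t - \underline A_t$ together with a double induction over a family of backward linear systems (its Lemmas on $w$ and $\underline w$), showing $w(t;\tau) \geq \underline w(t;\tau) > 0$. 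Finally, your cost argument (``the direct term $-r_e\epsilon$ dominates for $\epsilon$ small enough'') is a non sequitur: the upstream loss increases are also first order in $\epsilon$, so smallness of $\epsilon$ buys nothing. The correct conclusion $\hat s_0 < s_0$ is precisely the statement $\Delta S_0 = \tilde A_1\cdots\tilde A_{m-1}(\epsilon_m z_m) > 0$, i.e., it again rests on the product-positivity lemma you have not proved.
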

\vspace{0.1in}

We now comment on the conditions B1--B3.
B1 requires that the cost functions $C_j$ depend only on the injections $s_j$.  For instance,
if $C_j \left( \text{Re}\, s_j \right) = p_j$, then the cost is total active power loss over the network.
It also requires that $C_0$ be strictly increasing but makes no assumption on $C_j, j>0$.
Common cost functions such as line loss or generation cost usually satisfy B1.
If $C_0$ is only nondecreasing, rather than strictly increasing, in $p_0$ then
 B1--B3 still guarantee that all optimal solutions of OPF \eqref{eq:OPFbfm}
are (effectively) optimal for OPF-socp \eqref{eq:SOCP-radialBFM}, but OPF-socp may
not be exact, i.e., it may have an optimal solution that maintains strict inequalities in 
\eqref{eq:gdf.3}.   In this case the proof of Theorem \ref{thm:bfmsocp.2} can be used
to recursively construct from it another optimal solution that attains equalities in \eqref{eq:gdf.3}.

 B2 is affine in the injections $s := (p, q)$.
It enforces the upper bounds on voltage magnitudes because of \eqref{eq:linbounds}.

 B3 is a technical assumption and has a simple interpretation: the
branch power flow $S_{jk}$ on all branches should move in the same direction.
Specifically, given a marginal change in the complex power on line $j\rightarrow k$, 
the $2\times 2$ matrix $\underline A_{jk}$ is (a lower bound on) the Jacobian and 
describes the effect of this marginal change on the complex power on the line
immediately upstream from line $j\rightarrow k$.
  The product of $\underline A_{i}$ in
B3 propagates this effect upstream towards the root.
 B3 requires that a small change, positive or negative, in the power flow on a line 
affects \emph{all} upstream branch powers in the same direction.
This seems to hold with a significant margin
 in practice; see \cite{Gan-2014-BFMt-TAC} for examples from real systems.

Theorem \ref{thm:bfmsocp.2} unifies and generalizes some earlier results in 
\cite{Li-2012-BFMt, Gan-2012-BFMt, Gan-2013-BFMt-CDC}.
The sufficient conditions in these papers have the following simple and
practical interpretation: OPF-socp is exact provided either
\bi
\item there are no reverse power flows in the network, or
\item if the $r/x$ ratios on all lines are equal, or 
\item if the $r/x$ ratios increase in the downstream direction from the 
	substation (node 0) to the leaves then there are no reverse real power flows, or
\item if the $r/x$ ratios decrease in the downstream direction then
	there are no reverse reactive power flows.
\ei

The exactness of SOCP relaxation does not require convexity, i.e., 
the cost $C(x) = \sum_{j=0}^n C_j (\text{Re} s_j)$ need not be a convex function
and the injection regions $\mathbb S_j$ need not be convex sets.
Convexity allows polynomial-time computation. 
Moreover when it is convex the exactness of SOCP relaxation also implies the
uniqueness of the optimal solution, as the following result from \cite{Gan-2014-BFMt-TAC}
shows.
\begin{theorem}
\label{thm:uniqueBFM}
Suppose $\tilde G$ is a tree.
Suppose the costs $C_j$, $j=0, \dots, n$, are convex functions and the injection
regions $\mathbb S_j$, $j=1, \dots, n$, are convex sets.  If the relaxation OPF-socp
\eqref{eq:SOCP-radialBFM} is exact then its optimal solution is unique.
\end{theorem}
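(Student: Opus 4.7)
The plan is to suppose, for contradiction, that there are two distinct optimal solutions $x^{(1)}\neq x^{(2)}$ of OPF-socp \eqref{eq:SOCP-radialBFM} and derive a violation of exactness at the midpoint. Convexity of the feasible set (the constraints \eqref{eq:gdf.1}--\eqref{eq:gdf.2} are affine, \eqref{eq:gdf.3} is a rotated second-order cone, \eqref{eq:boxv} is affine, and each $\mathbb S_j$ is convex by assumption) together with convexity of $C(x)=\sum_j C_j(\mathrm{Re}\, s_j)$ implies that $\bar x:=\tfrac12(x^{(1)}+x^{(2)})$ is feasible with $C(\bar x)\le \tfrac12(C(x^{(1)})+C(x^{(2)}))$, hence also optimal. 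By hypothesis OPF-socp is exact, so \emph{every} optimal solution, including $x^{(1)}, x^{(2)}$ and $\bar x$, must attain equality in \eqref{eq:gdf.3}.

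The first step is to exploit the equality $\bar v_j\bar\ell_{jk}=|\bar S_{jk}|^2$ on each line. Using $v^{(i)}_j\ell^{(i)}_{jk}=|S^{(i)}_{jk}|^2$ for $i=1,2$, a direct expansion gives
\begin{equation*}
4\bigl(\bar v_j\bar\ell_{jk}-|\bar S_{jk}|^2\bigr)
= v^{(1)}_j\ell^{(2)}_{jk} + v^{(2)}_j\ell^{(1)}_{jk} - 2\,\mathrm{Re}\bigl(S^{(1)}_{jk}\overline{S^{(2)}_{jk}}\bigr).
\end{equation*}
By AM--GM this right-hand side is at least $2\sqrt{v^{(1)}_j v^{(2)}_j \ell^{(1)}_{jk}\ell^{(2)}_{jk}} - 2\,\mathrm{Re}(S^{(1)}_{jk}\overline{S^{(2)}_{jk}}) = 2|S^{(1)}_{jk}||S^{(2)}_{jk}| - 2\,\mathrm{Re}(S^{(1)}_{jk}\overline{S^{(2)}_{jk}})\ge 0$, and for it to vanish both inequalities must be tight. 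Therefore on every line $j\to k$ one has
\begin{equation*}
v^{(1)}_j\,\ell^{(2)}_{jk} \;=\; v^{(2)}_j\,\ell^{(1)}_{jk}
\qquad\text{and}\qquad
S^{(1)}_{jk} \;=\; c_{jk}\, S^{(2)}_{jk} \quad\text{for some } c_{jk}\ge 0.
\end{equation*}

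The second step is to propagate these identities through the tree using \eqref{eq:gdf.2}. When $\ell^{(1)}_{jk},\ell^{(2)}_{jk}>0$, define $\rho^v_j:=v^{(1)}_j/v^{(2)}_j$ and $\rho^\ell_{jk}:=\ell^{(1)}_{jk}/\ell^{(2)}_{jk}$; the first identity above gives $\rho^v_j=\rho^\ell_{jk}$, and combined with $v^{(i)}_j\ell^{(i)}_{jk}=|S^{(i)}_{jk}|^2$ one obtains $c_{jk}=\rho^v_j$. Substituting $S^{(1)}_{jk}=c_{jk}S^{(2)}_{jk}$ and $\ell^{(1)}_{jk}=c_{jk}\ell^{(2)}_{jk}$ into the angle-recurrence \eqref{eq:gdf.2} for $i=1$ and comparing with the same equation for $i=2$ yields $v^{(1)}_j-v^{(1)}_k=c_{jk}(v^{(2)}_j-v^{(2)}_k)$, which together with $\rho^v_j=c_{jk}$ forces $\rho^v_k=c_{jk}=\rho^v_j$. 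Since $v^{(1)}_0=v^{(2)}_0=v_0$ is given and $\tilde G$ is a connected tree, induction from the root gives $\rho^v_j=1$ at every node, hence $v^{(1)}=v^{(2)}$, $c_{jk}=1$, $S^{(1)}=S^{(2)}$, and $\ell^{(1)}=\ell^{(2)}$; finally \eqref{eq:gdf.1} determines $s$ uniquely, so $x^{(1)}=x^{(2)}$, contradicting our assumption.

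The main obstacle is the degenerate case $\ell^{(i)}_{jk}=0$ on some line, where ratios are undefined. Since $\underline v_j>0$, $\ell^{(i)}_{jk}=0$ forces $S^{(i)}_{jk}=0$, and the equality $v^{(1)}_j\ell^{(2)}_{jk}=v^{(2)}_j\ell^{(1)}_{jk}$ then forces $\ell^{(1)}_{jk}=\ell^{(2)}_{jk}=0$ and $S^{(1)}_{jk}=S^{(2)}_{jk}=0$ on such a line; \eqref{eq:gdf.2} then collapses to $v^{(1)}_j-v^{(2)}_j=v^{(1)}_k-v^{(2)}_k$, so the equality $v^{(1)}=v^{(2)}$ still propagates across this edge. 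Handling this boundary case uniformly with the nondegenerate case is the only delicate part of the argument; otherwise the proof is a clean combination of the AM--GM tightness conditions forced by exactness with tree-propagation of the linearized voltage recurrence.
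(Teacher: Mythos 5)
Your proof is correct and takes essentially the same route as the paper's: the midpoint of two optima is feasible and optimal, exactness forces equality in \eqref{eq:gdf.3} there, the resulting identity splits into an AM--GM tightness condition and a phase-alignment (Cauchy--Schwarz) tightness condition, and the common ratio $v^{(1)}_j/v^{(2)}_j$ is then propagated through the tree via \eqref{eq:gdf.2} starting from $v_0$ at the root. The degenerate case $\ell_{jk}=0$ that you treat separately is handled implicitly in the paper by phrasing everything in terms of $\eta_j := \hat v_j/\tilde v_j$ (whose propagation identity never divides by $\ell$ or $|S|$), but your explicit treatment is equally valid.
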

\vspace{0.1in}

Consider the model of \cite{Baran1989a} for radial networks,
which is \eqref{eq:gdf} with the inequalities in \eqref{eq:gdf.3} replaced by 
equalities.   Let $\mathbb X$ denote an equivalent feasible set of OPF,\footnote{There
is a bijection between $\mathbb X$ and the feasible set of OPF \eqref{eq:OPFbfm} 
 (when \eqref{eq:constraints} are placed by \eqref{eq:bfmopf.s})
\cite{Farivar-2013-BFM-TPS, Low2014a}.
}
i.e., those $x\in \mathbb R^{3(m+n+1)}$ that satisfy \eqref{eq:gdf}, \eqref{eq:cnstrs}
and attain equalities in \eqref{eq:gdf.3}.  
The proof of Theorem \ref{thm:uniqueBFM} reveals that, for radial networks, 
the feasible set $\mathbb X$ has a ``hollow'' interior.
\begin{corollary}
\label{coro:nonconvex}
If $\hat{x}$ and $\tilde x$ are distinct solutions in $\mathbb X$
then no convex combination of $\hat{x}$ and $\tilde x$ can be in $\mathbb X$.
In particular $\mathbb X$ is nonconvex.
\end{corollary}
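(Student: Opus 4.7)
The plan is to assume, for contradiction, that some strict convex combination $\bar x := \alpha \hat x + (1-\alpha)\tilde x$ with $\alpha \in (0,1)$ lies in $\mathbb X$, and to deduce $\hat x = \tilde x$. The affine constraints \eqref{eq:gdf.1}, \eqref{eq:gdf.2} (together with the voltage box \eqref{eq:boxv}) are automatically preserved under convex combinations, so the only nonconvex condition that can obstruct $\bar x \in \mathbb X$ is the equality version of \eqref{eq:gdf.3}, namely $v_j\ell_{jk} = |S_{jk}|^2$ on every line $j\to k$.

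Fix a line $(j,k)$ and treat $S_{jk}$ as a real $2$-vector $(P_{jk},Q_{jk})$. Using $\hat v_j\hat\ell_{jk}=|\hat S_{jk}|^2$ and $\tilde v_j\tilde\ell_{jk}=|\tilde S_{jk}|^2$, a direct expansion produces the identity
\[
\bar v_j\,\bar \ell_{jk} - |\bar S_{jk}|^2
\;=\; \alpha(1-\alpha)\bigl(\hat v_j\tilde\ell_{jk} + \tilde v_j\hat\ell_{jk} - 2\,\hat S_{jk}^{\top}\tilde S_{jk}\bigr).
\]
AM--GM and Cauchy--Schwarz then give
\[
\hat v_j\tilde\ell_{jk} + \tilde v_j\hat\ell_{jk}
\;\geq\; 2\sqrt{(\hat v_j\hat\ell_{jk})(\tilde v_j\tilde\ell_{jk})}
\;=\; 2\,|\hat S_{jk}|\,|\tilde S_{jk}|
\;\geq\; 2\,\hat S_{jk}^{\top}\tilde S_{jk},
\]
so the right-hand side is non-negative, and $\bar x\in\mathbb X$ forces \emph{both} inequalities to be tight. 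Since $\underline v_j>0$, tightness implies that on each line either $\hat S_{jk}=\tilde S_{jk}=0$ (which also forces $\hat\ell_{jk}=\tilde\ell_{jk}=0$), or there is a scalar $t_j>0$ with $(\hat v_j,\hat\ell_{jk},\hat S_{jk}) = t_j\,(\tilde v_j,\tilde\ell_{jk},\tilde S_{jk})$.

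The last step is to propagate this scaling through the tree, oriented toward the root $0$. Substituting the proportionality into \eqref{eq:gdf.2} gives $\hat v_k = t_j\,\tilde v_k$; sweeping from $0$ outward with the initial condition $\hat v_0 = \tilde v_0$ (the reference voltage is fixed) forces $t_j=1$ at every non-root node. On a zero-flow edge the same relation \eqref{eq:gdf.2} collapses to $\hat v_j - \hat v_k = 0 = \tilde v_j - \tilde v_k$, so equality at the parent transfers to the child without needing a per-line scaling factor, and acyclicity of $\tilde G$ guarantees the sweep is conflict-free. Thus $\hat v=\tilde v$, $\hat\ell=\tilde\ell$, $\hat S=\tilde S$; flow balance \eqref{eq:gdf.1} then pins down $\hat s = \tilde s$ as a fixed affine function of $(S,\ell)$, contradicting $\hat x\neq\tilde x$. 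The main delicate point I anticipate is bookkeeping around zero-flow edges, where the per-line identity fails to determine a scaling $t_j$; but this is handled uniformly by organizing the propagation root-to-leaves, so no separate case analysis is needed.
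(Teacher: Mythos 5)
Your proof is correct and follows essentially the same route as the paper: the paper reduces the corollary to the argument in the proof of Theorem \ref{thm:uniqueBFM}, observing that the key identity \eqref{eq:vell.1} holds for any convex combination, and then uses exactly your AM--GM/Cauchy--Schwarz tightness argument (there phrased via $\eta_j := \hat v_j/\tilde v_j$ and $\text{Re}(\hat S_{jk}^H \tilde S_{jk}) \le |\hat S_{jk}||\tilde S_{jk}|$) followed by propagation of the voltage ratio along the connected tree from $\eta_0 = 1$. Your explicit treatment of zero-flow edges is a harmless elaboration of a case the paper's algebra already covers implicitly.
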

\vspace{0.1in}

This property is illustrated vividly in several numerical examples 
for mesh networks in 
\cite{Hiskens-1995-Nonlinearity, Hiskens-2001-OPFboundary-TPS, LesieutreHiskens-2005-OPF-TPS, Hill-2008-OPFboundary-TPS}.

\subsection{Angle differences}

The sufficient conditions in 
\cite{Zhang2013, LavaeiTseZhang2012, LamZhang2012} require that the 
voltage angle difference across each line be small.
We explain the intuition using a result in \cite{LavaeiTseZhang2012} for
an OPF problem where $|V_j|$ are fixed for all $j\in N^+$ and reactive powers
are ignored.   
Under these assumptions, as long as the voltage angle difference 
is small, the power flow solutions form a locally convex surface that is the Pareto
front of its relaxation.  This implies that the relaxation is exact.
This geometric picture is apparent in earlier work on the geometry 
of power flow solutions, see e.g. \cite{Hiskens-1995-Nonlinearity}, and underlies the
intuition that the dynamics of a power system is usually benign until it is pushed 
towards the boundary of its stability region.  The geometric insight in
Figures \ref{fig:2busNoC} and \ref{fig:2busC} for BFM and later in this subsection 
for BIM says that, when it is far away from the boundary, 
the local convexity structure also facilitates exact relaxation. 
Reactive power is considered in 
\cite[Theorem 1]{LamZhang2012} with fixed $|V_j|$ where, with an additional constraint
on the lower bounds of reactive power injections 
that ensure these lower bounds are not tight, it is proved that if 
the original OPF problem is feasible then its SDP relaxation is exact.
The case of variable $|V_j|$ without reactive power is considered in
\cite[Theorem 7]{LavaeiTseZhang2012} but the simple geometric structure
is lost.

Recall that $y_{jk} = g_{jk} - \ii b_{jk}$ with $g_{jk}>0, b_{jk}>0$.  
Let $V_j = |V_j|\, e^{\ii \theta_j}$ and suppose $|V_j|$ are given.
Consider:
\begin{subequations}
\bq
\min_{p, P, \theta} & \!\!\!\! & \!\!\!\!  C(p)
\\
\text{subject to}  & \!\!\!\! & \!\!\!\!
 	\underline{p}_j \leq p_j \leq \overline{p}_j, \qquad\quad\ \   j\in N^+
\\
& \!\!\!\!  & \!\!\!\!  		
	\underline{\theta}_{jk} \leq \theta_{jk} \leq \overline{\theta}_{jk},  \quad (j,k)\in E
\\
& \!\!\!\! & \!\!\!\!
 	p_j = \sum_{k: k\sim j} P_{jk}, \qquad \quad\ \   j\in N^+
\\
& \!\!\!\! & \!\!\!\!
	P_{jk} = |V_j|^2 g_{jk} - |V_j| |V_k| g_{jk}\cos\theta_{jk} 
	+ |V_j| |V_k| b_{jk}\sin\theta_{jk},
	\quad    (j,k)\in E
\label{eq:pfOPF.1e}
\eq
\label{eq:pfOPF.1}
\end{subequations}
where $\theta_{jk} := \theta_j - \theta_k$ are
the voltage angle differences across lines $(j,k)$.

We comment on the constraints on angles $\theta_{jk}$ in \eqref{eq:pfOPF.1}.
When the voltage magnitudes $|V_i|$ are fixed, constraints on real power flows,
branch currents, line losses, as well as stability constraints
 can all be represented in terms of $\theta_{jk}$.  
Indeed a line flow constraint of the form $|P_{jk}| \leq \overline{P}_{jk}$ becomes 
a constraint on $\theta_{jk}$ using the expression for $P_{jk}$ in \eqref{eq:pfOPF.1e}.
A current constraint of the form $|I_{jk}| \leq \overline{I}_{jk}$ is also a constraint
on $\theta_{jk}$ since $|I_{jk}|^2 = |y_{jk}| ( |V_j |^2 + |V_k|^2 - 2 |V_jV_k| \cos \theta_{jk})$.
The line loss over $(j,k)\in E$ is equal to $P_{jk} + P_{kj}$ which is again
a function of $\theta_{jk}$.
Stability typically requires $|\theta_{jk}|$ to stay within a small threshold.
Therefore given constraints on branch power or current flows, losses,
and stability, appropriate bounds $\underline{\theta}_{jk}, \overline{\theta}_{jk}$
can be determined in terms of these constraints, assuming $|V_j|$ are fixed.  

We can eliminate the branch flows $P_{jk}$
and  angles $\theta_{jk}$ from  \eqref{eq:pfOPF.1}.
Since $|V_j|, j\in N^+$, are fixed we
assume without loss of generality that $|V_j|=1$ pu.
Define the injection region
\bq
\mathbb P_\theta & \!\!\!\!\! :=  \!\!\!\!\! & \left\{ p\in \mathbb R^n \left|  
		p_j = \!\!\! \sum_{k: k\sim j} \!\! \left( g_{jk} -  g_{jk}\cos\theta_{jk} + b_{jk}\sin\theta_{jk} \right), \, j\in N^+,
 \  
	\underline{\theta}_{jk} \leq \theta_{jk} \leq \overline{\theta}_{jk},
	 \, (j,k) \in E  \right. \right\}
\label{eq:defP}
\eq
Let $\mathbb P_p := \{p\in \mathbb R^n \, |\,  \underline{p}_j \leq p_j \leq \overline{p}_j,  j\in N\}$.
Then \eqref{eq:pfOPF.1} is:
\\
\noindent
\textbf{OPF:}
\bq
\underset{p}{\min} \ C(p)  &  \text{subject to} &    p \in \mathbb P_\theta \cap \mathbb P_p
\label{eq:pfOPF}
\eq
This problem is hard because the set $\mathbb P_\theta$ is nonconvex.
To avoid triviality we assume OPF \eqref{eq:pfOPF} is feasible.
For a set $A$ let conv$\, A$ denote the convex hull of $A$.
Consider the following problem that relaxes the nonconvex feasible set 
$\mathbb P_\theta \cap \mathbb P_p$ of \eqref{eq:pfOPF} to a convex superset:
\\
\noindent
\textbf{OPF-socp:}
\bq
\underset{p}{\min} \ C(p)  &  \text{subject to} &    p \in \text{conv}(\mathbb P_\theta) \, \cap \, \mathbb P_p
\label{eq:pfOPF-socp}
\eq
We will show below that \eqref{eq:pfOPF-socp} is indeed an SOCP.  It is
said to be \emph{exact} if every optimal solution of \eqref{eq:pfOPF-socp}
lies in $\mathbb P_\theta \cap \mathbb P_p$ and is therefore
also optimal for \eqref{eq:pfOPF}.

We say that a point $x\in A \subseteq \mathbb R^n$ is a \emph{Pareto optimal point} 
in $A$ if there does not exist another $x'\in A$ such that $x' \leq x$ with at least one
strictly smaller component $x_j' < x_j$.  The \emph{Pareto front of $A$}, 
denoted by $\mathbb O(A)$,  is the set of all Pareto optimal points in $A$.  
The significance of $\mathbb O(A)$ is that, for any increasing function, its
minimizer, if exists, is necessarily in $\mathbb O(A)$ whether $A$ is convex
or not.   If $A$ is convex then 
$x^\text{opt}$ is a Pareto optimal point in $\mathbb O(A)$ if and only if there is a
nonzero vector $c := (c_1, \dots, c_n) \geq 0$ such that $x^\text{opt}$ is a 
minimizer of $c^T x$ over $A$ \cite[pp.179--180]{Boyd2004}.

Assume
\begin{enumerate}
\item[C1:] $C(p)$ is {strictly} increasing in each $p_j$.
\item [C2:] For all $(j,k)\in E$, $-\tan^{-1}\frac{b_{jk}}{g_{jk}} < \underline{\theta}_{jk} \leq \overline{\theta}_{jk}
< \tan^{-1}\frac{b_{jk}}{g_{jk}}$.
\eee
The following result, proved in \cite{LavaeiTseZhang2012, LamZhang2012},
says that \eqref{eq:pfOPF-socp} is exact provided  $\theta_{jk}$ are suitably bounded.  
\begin{theorem} 
\label{thm:pfOPF}
Suppose $G$ is a tree and C1--C2 hold.    
\bee
\item  $\mathbb P_\theta \cap \mathbb P_p  \ = \ 
	\mathbb O( \text{conv}(\mathbb P_\theta)\, \cap\, \mathbb P_p  )$.
\item  The problem \eqref{eq:pfOPF-socp} is indeed an SOCP.  Moreover
	it is exact.
\eee
\end{theorem}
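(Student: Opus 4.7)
The plan is to first show that $\mathbb P_\theta$ coincides with the Pareto front $\mathbb O(\mathrm{conv}\,\mathbb P_\theta)$ of its convex hull, then verify this coincidence survives intersection with the box $\mathbb P_p$, and finally read exactness off from the strict monotonicity C1. The starting point is per-edge: for each line $(j,k)\in E$, the map $\theta_{jk}\mapsto\bigl(h_{jk}(\theta_{jk}),h_{jk}(-\theta_{jk})\bigr)$, with $h_{jk}(t):=g_{jk}(1-\cos t)+b_{jk}\sin t$, traces an arc $C_{jk}$ of an ellipse in the $(p_j,p_k)$-plane. Because $h_{jk}'(t)=g_{jk}\sin t+b_{jk}\cos t$ is strictly positive on $(-\tan^{-1}(b_{jk}/g_{jk}),\tan^{-1}(b_{jk}/g_{jk}))$, condition C2 makes $h_{jk}(\theta_{jk})$ strictly increasing and $h_{jk}(-\theta_{jk})$ strictly decreasing in $\theta_{jk}$, so $C_{jk}$ is precisely the Pareto (south-west) boundary of $\mathrm{conv}\,C_{jk}$ and carries a strictly positive outward normal at every point.

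Embedding each $C_{jk}$ into $\mathbb R^{n+1}$ with zeros outside the coordinates $\{j,k\}$ exhibits $\mathbb P_\theta=\sum_{(j,k)\in E}C_{jk}^{\mathrm{ext}}$ as a Minkowski sum, hence $\mathrm{conv}\,\mathbb P_\theta=\sum_{(j,k)\in E}\mathrm{conv}\,C_{jk}^{\mathrm{ext}}$. Given $p\in\mathbb O(\mathrm{conv}\,\mathbb P_\theta)$ with a decomposition $p=\sum p_{jk}$, if some $p_{jk}\notin C_{jk}^{\mathrm{ext}}$ then the per-edge Pareto property produces a dominator $q_{jk}\in C_{jk}^{\mathrm{ext}}$ with $q_{jk}\le p_{jk}$, $q_{jk}\ne p_{jk}$, and substituting in the sum gives a strict dominator of $p$ in $\mathrm{conv}\,\mathbb P_\theta$, a contradiction. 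Conversely, any $p\in\mathbb P_\theta$ with parameter $(\theta_{jk}^\star)$ is exposed by a strictly positive linear functional $\mu\in\mathbb R^{n+1}_{>0}$: start with any $\mu_0>0$ and propagate the ratios $\mu_j/\mu_k=h_{jk}'(-\theta_{jk}^\star)/h_{jk}'(\theta_{jk}^\star)>0$ along the tree; the absence of cycles makes these ratios globally consistent, and $\mu^\top y$ is then minimized over $\mathrm{conv}\,\mathbb P_\theta$ at $p$ edge by edge. Thus $\mathbb P_\theta=\mathbb O(\mathrm{conv}\,\mathbb P_\theta)$, from which $\mathbb P_\theta\cap\mathbb P_p\subseteq\mathbb O(\mathrm{conv}\,\mathbb P_\theta\cap\mathbb P_p)$ follows at once.

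The reverse inclusion is the main obstacle: given $p\in\mathbb O(\mathrm{conv}\,\mathbb P_\theta\cap\mathbb P_p)$ with $p_{jk}\notin C_{jk}^{\mathrm{ext}}$ for some edge, the naive arc replacement may push $p_j$ below an active lower bound $\underline p_j$, so one must compensate by simultaneously perturbing a second edge $(j,\ell)$ incident to $j$ (which exists because $G$ is a connected tree) to pin $p_j$ at $\underline p_j$ while, by the monotonicity from the first paragraph, strictly decreasing $p_\ell$; an induction along the tree rooted at the bound-active nodes completes the argument. For part~(ii), introduce $(c_{jk},s_{jk}):=(\cos\theta_{jk},\sin\theta_{jk})$: under C2 the angle bounds become the linear inequalities $\tan(\underline\theta_{jk})\,c_{jk}\le s_{jk}\le\tan(\overline\theta_{jk})\,c_{jk}$, $\mathrm{conv}\,C_{jk}$ is cut out by these together with the second-order cone inequality $c_{jk}^2+s_{jk}^2\le 1$, and each injection $p_j=\sum_{k\sim j}\bigl(g_{jk}(1-c_{jk})+b_{jk}s_{jk}\bigr)$ (with the appropriate sign on $s_{jk}$) is linear in $(c,s)$, so~\eqref{eq:pfOPF-socp} is genuinely an SOCP. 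Since $C$ is strictly increasing by C1, every minimizer lies in the Pareto front and hence in $\mathbb P_\theta\cap\mathbb P_p$ by part~(i); so $c_{jk}^2+s_{jk}^2=1$ at every optimum and the relaxation is exact.
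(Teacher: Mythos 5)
Your first paragraph and the forward inclusion are essentially sound and parallel the paper's route: the per-edge arc is the Pareto front of its convex hull (the paper's Lemma \ref{lemma:PF2}), $\mathbb P_\theta$ is the image of a product of arcs under a linear map (your Minkowski-sum phrasing is equivalent), and the paper's Lemma \ref{lemma:misc} plays the role of your exposing-functional construction. The genuine gap is the reverse inclusion $\mathbb O(\text{conv}(\mathbb P_\theta)\cap\mathbb P_p)\subseteq\mathbb P_\theta\cap\mathbb P_p$, which you yourself flag as ``the main obstacle'' and then dispatch in one sentence. Your compensation chain does not terminate in general: when you perturb a second edge $(j,\ell)$ to pin $p_j$ at $\underline p_j$ you need $(P_{j\ell},P_{\ell j})$ to lie on its arc and not at the endpoint in the required direction, and the chain can reach a leaf (or a node all of whose remaining incident edges are at their angle bounds) where no further compensation is possible. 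A concrete sanity check: your argument never uses the standing assumption that OPF \eqref{eq:pfOPF} is feasible, yet that assumption is indispensable --- if $\mathbb P_\theta\cap\mathbb P_p=\emptyset$ while $\text{conv}(\mathbb P_\theta)\cap\mathbb P_p$ is a nonempty compact set, the right-hand side has a nonempty Pareto front and the claimed equality fails. The paper closes exactly this gap by a different device: strong duality produces multipliers $\overline\lambda,\underline\lambda$ reducing the problem to per-edge minimizations of $\overline c_j\hat P_{jk}+\overline c_k\hat P_{kj}$ with $\overline c:=c+\overline\lambda-\underline\lambda$, and the problematic case $\overline c_j\le 0,\ \overline c_k\le 0$ is handled by Lemma \ref{lemma:-vec}, which compares $P$ against an \emph{arbitrary feasible point} $\tilde P$ of OPF (this is where feasibility enters) and propagates the inequalities \eqref{eq:PtildeP} from the boundary nodes of the subtree $\{i:\overline c_i\le 0\}$ to the root and back to force $P=\tilde P$ there.

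A secondary error: in part (ii) your proposed description of $\text{conv}(C_{jk})$ by $c_{jk}^2+s_{jk}^2\le 1$ together with $\tan(\underline\theta_{jk})\,c_{jk}\le s_{jk}\le\tan(\overline\theta_{jk})\,c_{jk}$ is a circular \emph{sector} (it contains the center of the ellipse), which is strictly larger than the convex hull of the arc; you have dropped the chord constraint, which the paper writes explicitly as $P_{kj}\le\underline\pi_{kj}+\frac{\overline\pi_{kj}-\underline\pi_{kj}}{\overline\pi_{jk}-\underline\pi_{jk}}(P_{jk}-\underline\pi_{jk})$. Without it you have represented a different, weaker relaxation than \eqref{eq:pfOPF-socp}, so neither the ``is an SOCP'' claim nor the exactness claim for \eqref{eq:pfOPF-socp} follows as written.
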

\vspace{0.1in}

C1 is needed to ensure every optimal solution of OPF-socp \eqref{eq:pfOPF-socp} is optimal
for OPF \eqref{eq:pfOPF}.  If $C(p)$ is nondecreasing but 
not {strictly} increasing in all $p_j$, then
$\mathbb P_\theta \cap \mathbb P_p  \subseteq
	\mathbb O( \text{conv}(\mathbb P_\theta)\, \cap\, \mathbb P_p  )$ 
and OPF-socp may not be exact according to our definition.
Even in that case it is possible to recover an optimal solution of OPF from any optimal
solution of OPF-socp.

Theorem \ref{thm:pfOPF} is illustrated in Figures \ref{fig:bim2busNoC} and 
\ref{fig:bim2busC}.
\begin{figure}[htbp]
\centering
	\includegraphics[width=0.45 \textwidth]{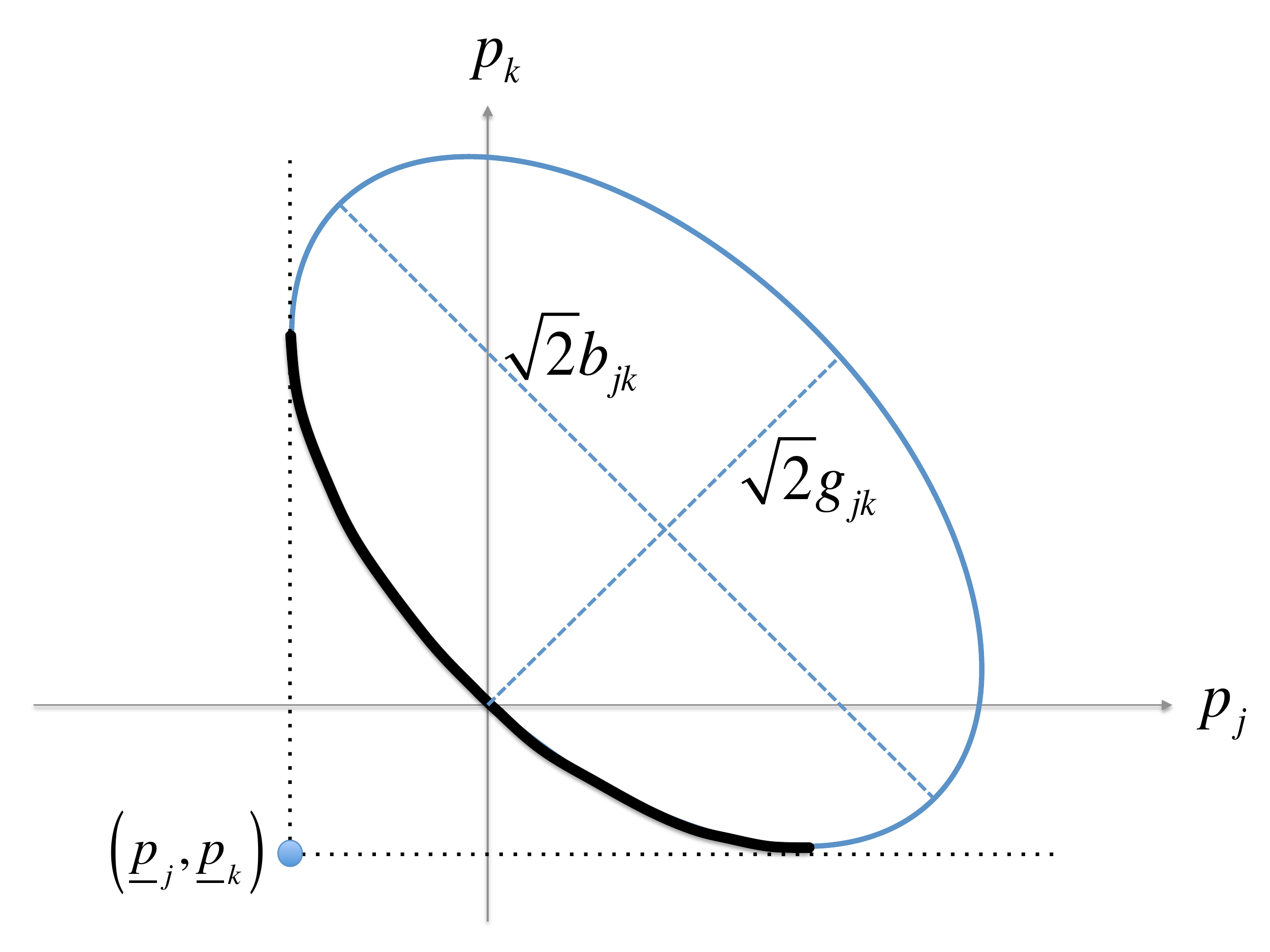}
\caption{Feasible set of OPF \eqref{eq:pfOPF} for a two-bus network without 
	any constraint when $|V_j|$ are fixed and reactive powers are ignored.
	It is an ellipse without the interior, hence nonconvex.  
	OPF-socp \eqref{eq:pfOPF-socp} includes the interior of the ellipse and 
	is hence convex. If the cost function $C$ is strictly increasing in $(p_j, p_k)$
	then the Pareto front of the SOCP feasible set will lie on the lower part of 
	the ellipse, $\mathbb O (\mathbb P_\theta) = \mathbb P_\theta$,
	and hence OPF-socp is exact.
}
\label{fig:bim2busNoC}
\end{figure}
\begin{figure}[htbp]
\centering
\subfigure [Exact relaxation with constraint] {
	\includegraphics[width=0.4 \textwidth]{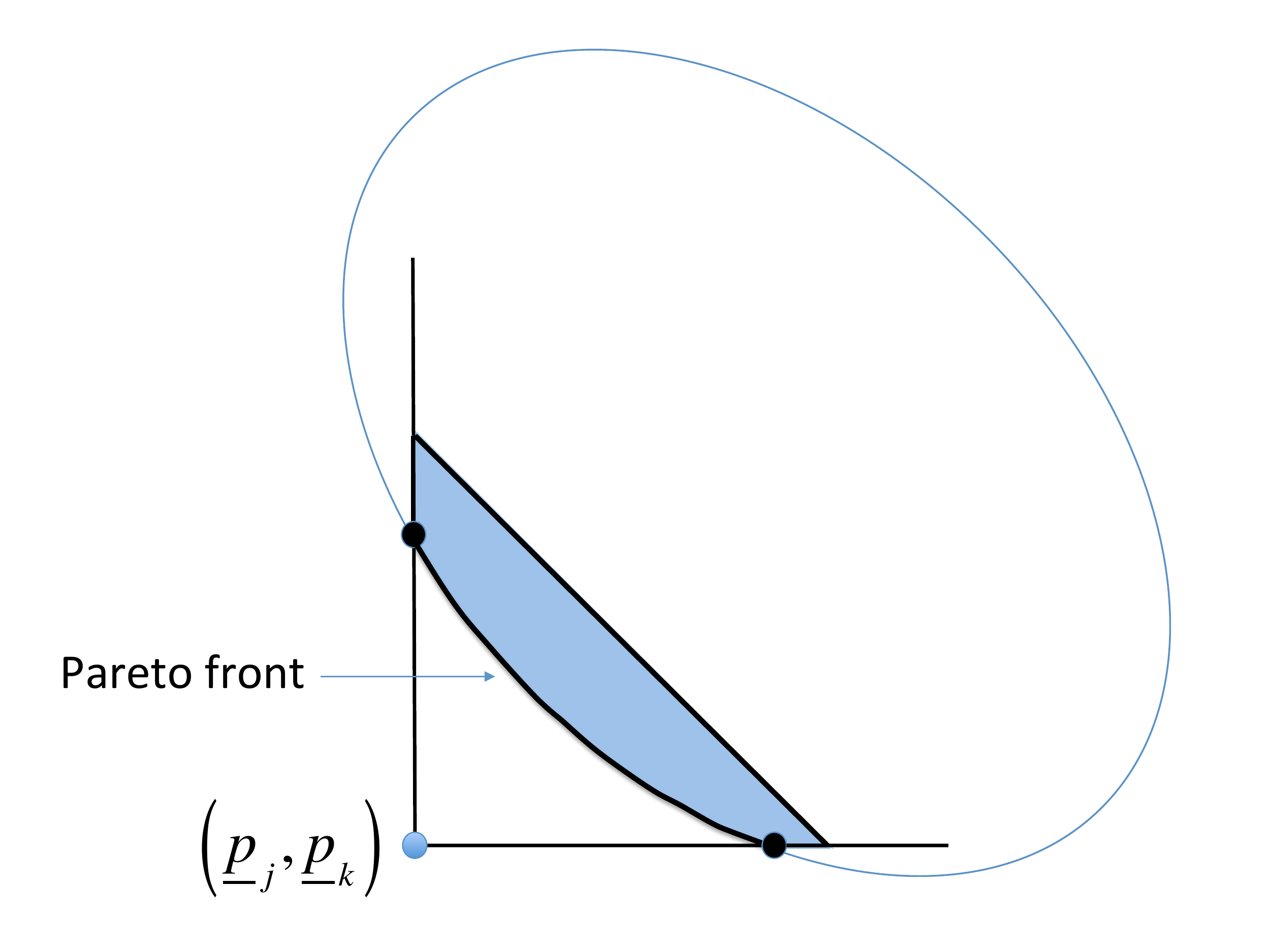}
	}
\subfigure [Inexact relaxation with constraint] {
	\includegraphics[width=0.4 \textwidth]{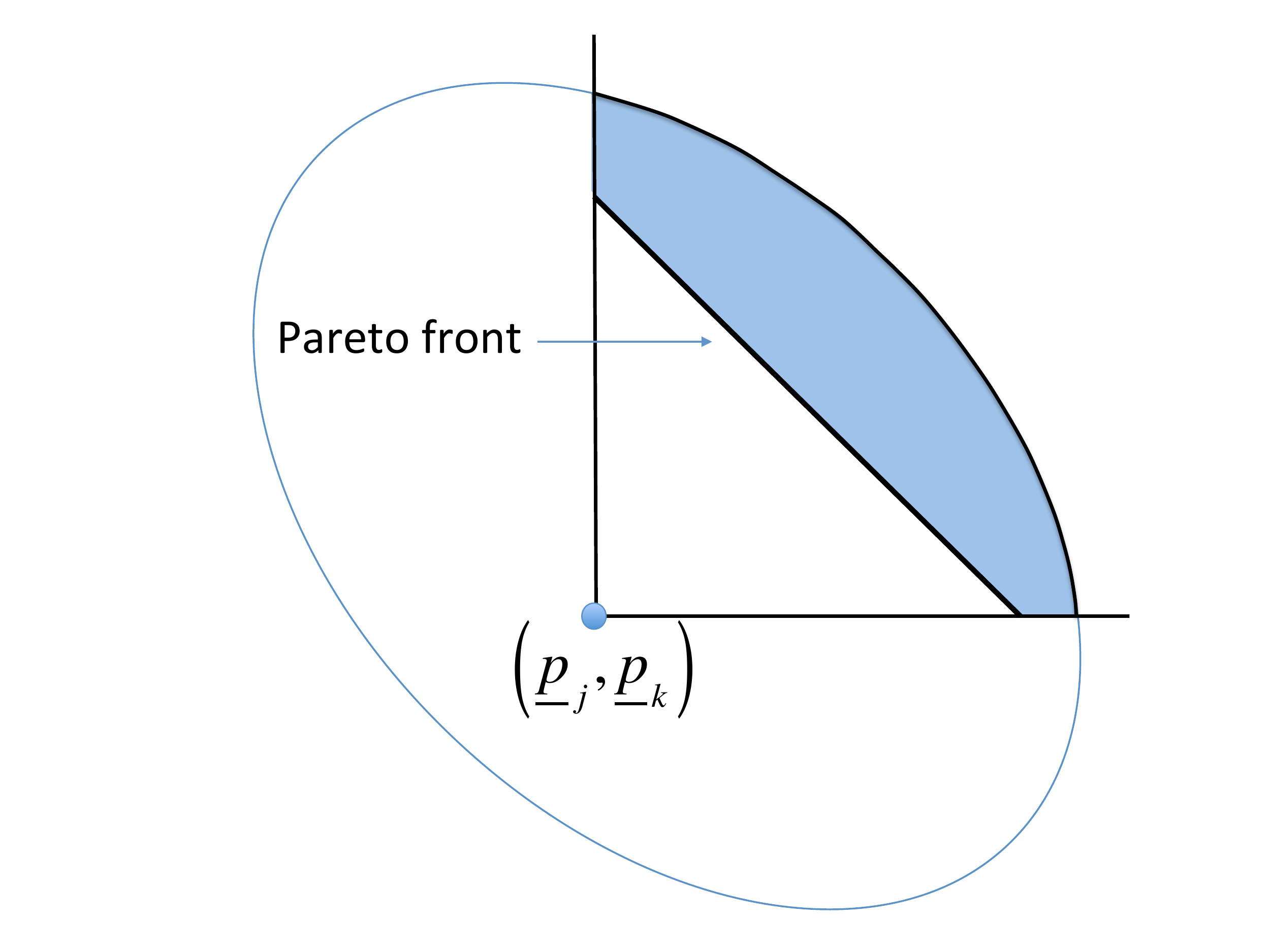}
	}
\caption{With lower bounds
$\underline p$ on power injections, the feasible set of OPF-socp \eqref{eq:pfOPF-socp}
is the shaded region.
(a) When the feasible set of OPF \eqref{eq:pfOPF} is restricted to the lower half of 
	the ellipse (small $|\theta_{jk}|$), the Pareto front
	remains on the ellipse itself,
	$\mathbb P_\theta \cap \mathbb P_p  =
	\mathbb O( \text{conv}(\mathbb P_\theta)\, \cap\, \mathbb P_p  )$,
	and hence the relaxation is exact.
(b) When the feasible set of OPF includes upper half of the ellipse (large $|\theta_{jk}|$),
	the Pareto front may not lie on the ellipse if $\underline p$ is large, making
	the relaxation not exact.
}
\label{fig:bim2busC}
\end{figure}
As explained in the caption of Figure \ref{fig:bim2busNoC}, if there are no 
constraints then  SOCP relaxation \eqref{eq:pfOPF-socp} is exact under 
condition C1.  It is clear from the figure that upper bounds on power injections
do not affect exactness whereas lower bounds do.
The purpose of condition C2 is to restrict the angle $\theta_{jk}$ in order to eliminate the
upper half of the ellipse from $\mathbb P_\theta$.  As explained in the caption of 
Figure \ref{fig:bim2busC}, under C2, 
$\mathbb P_\theta \cap \mathbb P_p  =
	\mathbb O( \text{conv}(\mathbb P_\theta)\, \cap\, \mathbb P_p  )$
 and hence the relaxation is exact.  Otherwise it may not.
\vspace{0.2in}

When the network is not radial or $|V_j|$ are not constants, then the feasible set can be 
much more complicated
than ellipsoids \cite{Hiskens-2001-OPFboundary-TPS, LesieutreHiskens-2005-OPF-TPS, Hill-2008-OPFboundary-TPS}.
Even in such  settings the Pareto fronts might still coincide, though the simple
geometric picture is lost.
See \cite{Hiskens-1995-Nonlinearity} for a numerical example on an Australian system or 
\cite{Bose-2014-BFMe-TAC} on a three-bus mesh network.

\subsection{Equivalence}

Since BIM and BFM are equivalent, the results on exact SOCP relaxation and
uniqueness of optimal solution apply in both models.
Recall the linear bijection $g$ from BIM to BFM defined in 
\cite[end of Section V]{Low2014a} by ${x}  = g(W_G)$ where 
\bqn
S_{jk} &:= &   y_{jk}^H \left( [W_G]_{jj} - [W_G]_{jk} \right),   
			\qquad\qquad\qquad\qquad\qquad\ \, \,  j\rightarrow k \in \tilde E
\label{eq:g.1}
\\
\ell_{jk} & \!\!\!   := \!\!\!   &   |y_{jk}|^2 \left( [W_G]_{jj} + [W_G]_{kk} - [W_G]_{jk} - [W_G]_{kj} \right),
			\qquad\ j \rightarrow k \in \tilde E
\label{eq:g.2}
\\
v_j & := &  [W_G]_{jj}, 
			\qquad\qquad\qquad\qquad\qquad\qquad\qquad\qquad\qquad\ \,    j\in N^+
\label{eq:g.3}
\\
s_j & := & \sum_{k: j\sim k} y_{jk}^H \left( [W_G]_{jj} - [W_G]_{jk} \right), 
			\qquad\qquad\qquad\qquad\qquad\ \, \,  j\in N^+
\eqn
The mapping $g$ allows us to directly apply Theorem \ref{thm:uniqueBFM} to BIM.
We summarize all the results for type A and type B conditions for radial networks.
\footnote{To apply type C conditions to BFM, one needs to translate
the angles $\theta_{jk}$ to the BFM variables 
$x := (S, \ell, v, s)$ through $\beta_{jk}(x)$, though this will introduce additional 
nonconvex constraints into OPF of the form 
$\underline \theta_{jk} \leq \beta_{jk}(x) \leq \overline \theta_{jk}$.
}
\begin{theorem}
Suppose $G$ and $\tilde G$ are trees.  Suppose conditions A1--A2', or A3--A4, or B1--B3
hold.  Then
\bee
\item \emph{BIM:} SOCP relaxation \eqref{eq:bimOPF-socp} is exact.  Moreover if $C(W_G)$
	is convex in $([W_G]_{jj}, [W_G]_{jk})$ then the optimal solution is unique.

\item \emph{BFM:} SOCP relaxation \eqref{eq:bfmOPF-socp} is exact.  Moreover if
	$C(x) := \sum_j C_j(p_j)$ is convex in $p$ 
	then the optimal solution is unique.
\eee
\end{theorem}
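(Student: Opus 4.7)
The plan is to assemble the theorem from three previously proved exactness results—Corollary \ref{coro: OPFSeparatingLine}, Theorem \ref{thm:bfmsocp.1}, and Theorem \ref{thm:bfmsocp.2}—together with Theorem \ref{thm:uniqueBFM} for uniqueness, and then to transport the conclusions between BIM and BFM via the linear bijection $g$ recalled in the preceding subsection. Since $g$ is given explicitly by the affine expressions \eqref{eq:g.1}--\eqref{eq:g.3} in the entries $([W_G]_{jj}, [W_G]_{jk})$, it is a linear bijection between the SOCP-feasible set $\mathbb W_G^+$ of \eqref{eq:bimOPF-socp} and the SOCP-feasible set of \eqref{eq:bfmOPF-socp}; moreover by the results of \cite{Low2014a} it carries the ``exact'' subset in one model onto the ``exact'' subset in the other, since for radial networks the cycle conditions \eqref{eq:cyclecond.2} and \eqref{eq:cyclecond.1} are vacuous and rank-1 of $W_G(j,k)$ corresponds precisely to equality in \eqref{eq:mdf.3}.

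First I would dispatch the exactness claim condition-set by condition-set. Under A1--A2', Corollary \ref{coro: OPFSeparatingLine} gives exactness for BIM directly, and $g$ then transports every optimal BIM solution to a BFM optimum attaining equality in \eqref{eq:mdf.3}, giving BFM exactness. Under A3--A4 and under B1--B3, Theorems \ref{thm:bfmsocp.1} and \ref{thm:bfmsocp.2} respectively give BFM exactness, and $g^{-1}$ transports each optimal BFM solution to a $2\times 2$ psd rank-1 BIM optimum, giving BIM exactness. In each case the cost-function hypothesis matches automatically because the objective value is preserved under $x = g(W_G)$.

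Second, for the uniqueness claims I would invoke Theorem \ref{thm:uniqueBFM} in BFM: exactness together with convexity of the $C_j$ and of the injection regions $\mathbb S_j$ yields a unique BFM optimizer, which pulls back through $g^{-1}$ to a unique BIM optimizer. The key verification is that the convexity hypothesis of the theorem transfers cleanly. For the BIM statement, convexity of $C(W_G)$ in $([W_G]_{jj}, [W_G]_{jk})$ together with linearity of $g$ gives convexity of the induced $C(g^{-1}(x))$ in $x$, and the BIM injection constraints \eqref{eq:opfW.1}--\eqref{eq:opfW.2} are affine in $W_G$, so the implied sets $\mathbb S_j$ in the BFM formulation are convex, placing us in the hypotheses of Theorem \ref{thm:uniqueBFM}. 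For the BFM statement the assumed form $C(x) = \sum_j C_j(p_j)$ with convex $C_j$ and convex $\mathbb S_j$ applies Theorem \ref{thm:uniqueBFM} directly.

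The main obstacle I anticipate is not substantive but notational: ensuring that the three mutually consistent ``exactness'' definitions (rank-1 of $W_G(j,k)$, equality in \eqref{eq:mdf.3}, and recovery of an OPF optimum from the SOCP optimum) are lined up correctly under $g$, and that the two differently stated convexity hypotheses in parts (i) and (ii) of the theorem each invoke Theorem \ref{thm:uniqueBFM} through the appropriate side of the bijection. Once the equivalence of exactness definitions across models, established in \cite{Low2014a}, is cited, the remainder of the argument is a clean repackaging of the earlier results.
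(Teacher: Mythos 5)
Your proposal is correct and follows exactly the route the paper intends: this theorem is stated as a summary with no separate proof in the appendix, the paper itself saying only that the linear bijection $g$ ``allows us to directly apply Theorem \ref{thm:uniqueBFM} to BIM'' after the exactness results of Corollary \ref{coro: OPFSeparatingLine} and Theorems \ref{thm:bfmsocp.1}--\ref{thm:bfmsocp.2}. Your elaboration---transporting exactness through $g$ using the equivalence of the two models' exactness notions for trees, and checking that the convexity hypotheses transfer so that Theorem \ref{thm:uniqueBFM} applies on either side---is precisely the intended argument, just written out in more detail than the paper bothers to.
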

\vspace{0.1in}
Since both the SDP and the chordal relaxations are equivalent to the SOCP
relaxation for radial networks, these results apply to SDP and chordal relaxations as well.

\section{Mesh networks}
\label{sec:exact-mesh}

In this section we summarize a result of 
\cite[Part II]{Farivar-2013-BFM-TPS} on mesh networks with phase shifters
and of \cite[Part I]{Farivar-2013-BFM-TPS}, \cite{Rantzer2011, Gan-2014-OPFDC-TPS} 
on dc networks when all voltages are nonnegative.

To be able to recover an optimal solution of OPF from an optimal solution 
$W_G^\text{socp} / x^\text{socp}$ of SOCP relaxation, $W_G^\text{socp} / x^\text{socp}$
must satisfy both a local condition and a global cycle condition
(\eqref{eq:cyclecond.2} for BIM and \eqref{eq:cyclecond.1} for BFM); see the 
definition of exactness in Section \ref{sec:opf}.  
The conditions of Section \ref{sec:exact-radial} 
guarantee that every SOCP optimal solution will satisfy the local condition 
(i.e., $W_G^\text{socp}$ is $2\times 2$ psd rank-1 and $x^\text{socp}$ attains
equalities in \eqref{eq:mdf.3}), \emph{whether the network is radial or mesh}, 
but do not guarantee that it satisfies the cycle condition.
For radial networks, the cycle condition is vacuous and therefore
the conditions of Section \ref{sec:exact-radial} 
are sufficient for SOCP relaxation to be exact.    The result of 
\cite[Part II]{Farivar-2013-BFM-TPS} implies that these conditions are sufficient
also for a mesh network that has tunable phase shifters at strategic locations.

Similar conditions also extend to dc networks where
all variables are real and the voltages are assumed nonnegative.

\subsection{AC networks with phase shifters}

For BFM the conditions of Section \ref{sec:exact-radial}
guarantee that every optimal solution of OPF-socp \eqref{eq:bfmOPF-socp}
attains equalities in \eqref{eq:mdf.3} but may or may not satisfy the
cycle condition \eqref{eq:cyclecond.1}. 
If it does then it can be uniquely mapped to an
optimal solution of OPF \eqref{eq:OPFbfm}, according to 
\cite[Theorem 2]{Farivar-2013-BFM-TPS}.
If it does not then the solution is not physically implementable because it does not
satisfy the power flow equations (Kirchhoff's laws).
For a radial network the reduced incidence matrix $B$ in \eqref{eq:cyclecond.1}
is $n\times n$ and invertible and hence every optimal solution of the SOCP
relaxation that attains equalities in \eqref{eq:mdf.3} always satisfies the 
cycle condition \cite[Theorem 4]{Farivar-2013-BFM-TPS}.
This is not the case for a mesh network where $B$ is $m \times n$ with $m>n$.

It is proved in \cite[Part II]{Farivar-2013-BFM-TPS} however that if the network
has tunable phase shifters then any SOCP solution that attains equalities in 
\eqref{eq:mdf.3} becomes implementable even if the solution does not satisfy
the cycle condition.  
This extends the sufficient conditions A1--A2', or A3--A4, or B1--B3, or C0--C1 from
radial networks to this type of mesh networks.

For BIM the effect of phase shifter is equivalent to introducing a free variable $\phi_c$ in
\eqref{eq:cyclecond.2} for each basis cycle $c$ so that the cycle condition can 
always be satisfied for any $W_G$.
The results presented here however start with a simple power flow model 
\eqref{eq:bfmps} for networks 
with phase shifters.  This model makes transparent the effect of the spatial distribution 
of phase shifters and how they impact the exactness of SOCP relaxation 
 and can be useful in other contexts, such as the design of a network of FACTS 
 (Flexible AC Transmission Systems) devices.

\vspace{0.05in}
\noindent
\emph{BFM with phase shifters.}
We consider an idealized phase shifter
that  only shifts the phase angles of the sending-end voltage and current across a line,
and has no impedance nor limits on the shifted angles.
  Specifically consider
an idealized phase shifter parametrized by $\phi_{jk}$ across line $j\rightarrow k$ as shown in
Figure \ref{fig:PST}.
\begin{figure}[htbp]
\centering
\includegraphics[scale=0.21]{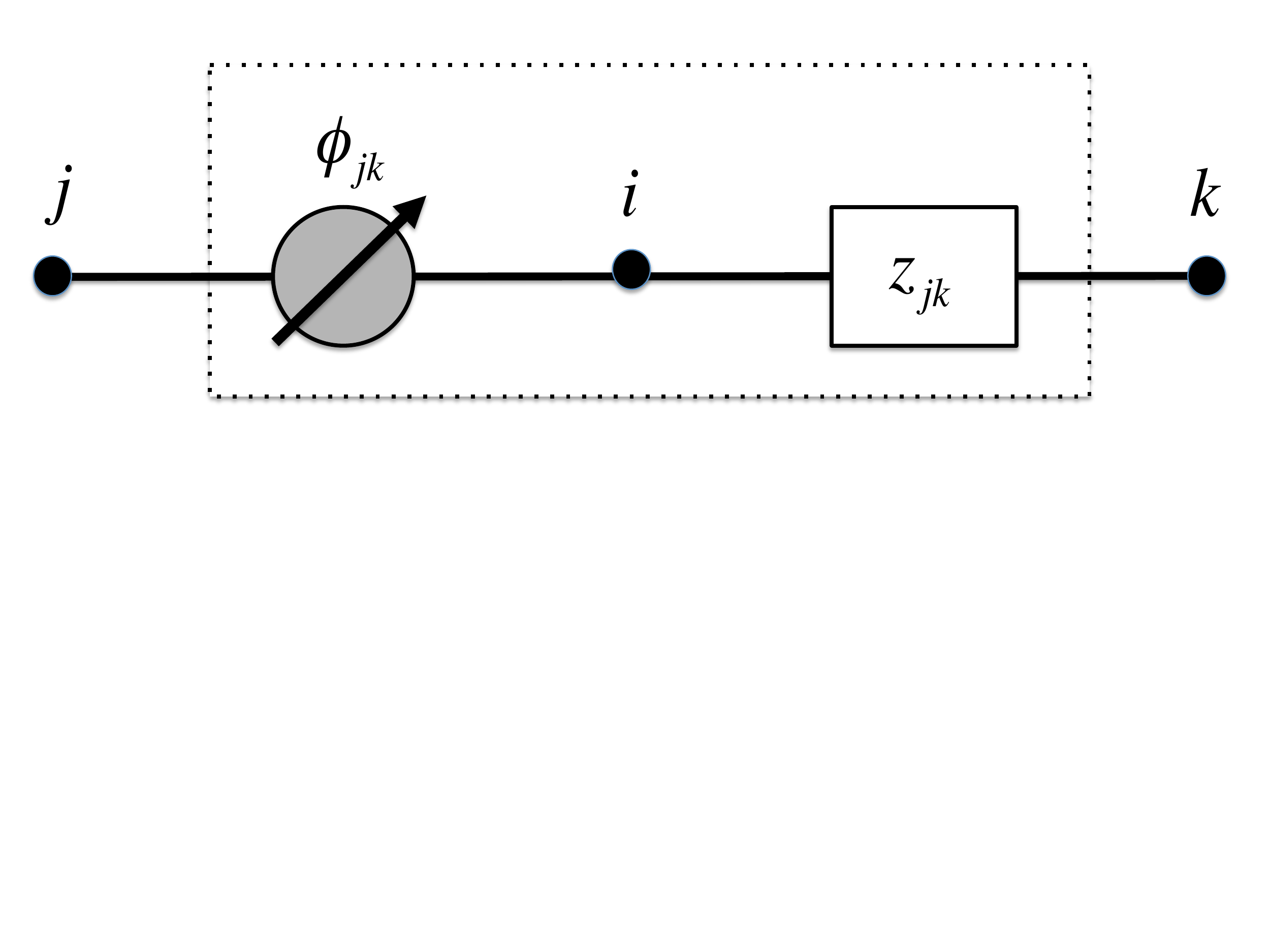}
\vspace{-0.9in}
\caption{\label{fig:PST} Model of a phase shifter in line $j\rightarrow k$.}
\end{figure}
As before let $V_j$ denote the sending-end voltage at node $j$.  Define $I_{jk}$ to be the 
{\em sending-end} current
leaving node $j$ towards node $k$.    Let $i$ be the point between the phase shifter $\phi_{jk}$
and line impedance $z_{jk}$.   Let $V_i$ and $I_i$ be the voltage at $i$ and the current from $i$
to $k$ respectively.  Then the effect of an idealized phase shifter, parametrized by $\phi_{jk}$,
is summarized by the following modeling assumptions:
\bqn
V_i \, = \, V_j\ e^{\ii \phi_{jk}} & \text{and} &
I_{i} \, =\, I_{jk}\ e^{\ii \phi_{jk}}
\eqn
The power transferred from nodes $j$ to $k$ is still (defined to be)
$S_{jk} :=  V_j I_{jk}^H$, which is equal to the power $V_i I_{i}^H$ from nodes $i$ to $k$
since the phase shifter is assumed to be lossless.
Applying Ohm's law across $z_{jk}$, we define the {\em branch flow model with phase
shifters} as the following set of equations:
\begin{subequations}
\bq
\!\!\!\!\!\!\!\!\!\!\!
 \sum_{k: j\rightarrow k} \!\! S_{jk}   & \!\!\! = \!\!\!
 	 &  \!\! \sum_{i: i\rightarrow j} \!\! \left( S_{ij} - z_{ij} |I_{ij}|^2 \right) + s_j,
		\qquad j\in N^+
\label{eq:bfmps.3}
\\
\!\!\!\!\!\!\!\!\!\!\!
I_{jk} & \!\!\! = \!\!\! & y_{jk} \left( V_j - V_k \ e^{- \ii \phi_{jk}} \right) \!,
		\quad\  j\rightarrow k \in \tilde E
\label{eq:bfmps.1}
\\
\!\!\!\!\!\!\!\!\!\!\!
S_{jk} & \!\!\! = \!\!\! & V_j I_{jk}^H, \qquad\qquad\qquad\quad  \ j\rightarrow k \in \tilde E
\label{eq:bfmps.2}
\eq
\label{eq:bfmps}
\end{subequations}
Without phase shifters ($\phi_{jk}=0$), \eqref{eq:bfmps} reduces to BFM \eqref{eq:bfm}.
Let $\tilde x := (S, I, V, s) \in \mathbb{C}^{2(m+n+1)}$ denote the variables in \eqref{eq:bfmps}.
Let $x := (S, \ell, v, s) \in \mathbb R^{3(m+n+1)}$ denote the variables in 
SOCP relaxation \eqref{eq:bfmOPF-socp}.  
These variables are related through the mapping $x = h(\tilde x)$ where 
$\ell_{jk} = |I_{jk}|^2$ and $v_j = |V_j|^2$.  In particular, given any solution $\tilde x$
of \eqref{eq:bfmps}, $x := h(\tilde x)$ satisfies \eqref{eq:mdf} with equalities in \eqref{eq:mdf.3}.

\vspace{0.05in}
\noindent
\emph{Cycle condition.}
If every line has a phase shifter then the cycle condition changes from \eqref{eq:cyclecond.1}
to: given any $x$ that satisfies \eqref{eq:mdf} with equalities in \eqref{eq:mdf.3},
\bq
\!\!\!\!\!\!
\exists (\theta, \phi)\in \mathbb R^{n+m} &\!\!\!\!  \text{such that}  \!\!\!\!  & 
		B\theta  =  \beta(x)  - \phi \!\!\!  \mod 2\pi
\label{eq:cyclecond.ps}
\eq
It is proved in \cite[Part II]{Farivar-2013-BFM-TPS} that, 
given any $x$ that attains equalities in \eqref{eq:mdf.3}, there always exists a
$\theta$ in $(-\pi, \pi]^n$ and a $\phi$ in $(-\pi, \pi]^m$ that solve \eqref{eq:cyclecond.ps}.
Moreover phase shifters are needed only on lines not in a spanning tree.

\vspace{0.05in}
\noindent
\emph{Exact SOCP relaxation.}
Recall the OPF problem \eqref{eq:OPFbfm}
where the feasible set $\mathbb{\tilde X}$ without phase shifters is:
\bqn
\mathbb{\tilde X}  & \!\!\!\! := \!\!\!\! &  \{ \tilde{x}  \ \vert \ \tilde{x} \text{ satisfies } 
				\eqref{eq:bfmps} \text{ with } \phi = 0 \text{ and } \eqref{eq:bfmopf} \}
\eqn
Phase shifters on every line enlarge the feasible set to: 
\bqn
\overline{\mathbb X} & \! := &\!\!  \left\{ \tilde x \ \vert \ \tilde{x} 
		\text{ satisfies \eqref{eq:bfmps} for some $\phi$ and \eqref{eq:bfmopf}}  \right\}
\eqn
Given any spanning tree $T$ 
of $\tilde G$, let ``$\phi\in T^{\perp}$'' be the shorthand for ``$\phi_{jk}=0$ for all $(j,k)\in T$'', 
i.e., $\phi$ involves only phase shifters in lines not in the spanning tree $T$. 
Fix any $T$.  Define the feasible set when there are phase shifters only on lines
outside $T$:
\bqn
\overline{\mathbb X}_T \!\!\! &\!\!\!  :=  \!\!\! &\!\!\! \{ \tilde x \ \vert \ \tilde{x} 
		\text{ satisfies \eqref{eq:bfmps} for some $\phi \in T^\perp$ and \eqref{eq:bfmopf}} \}
\eqn
Clearly $\tilde{\mathbb X} \subseteq \overline{\mathbb X}_T \subseteq \overline{\mathbb X}$.
Define the (modified) OPF problem where there is a phase shifter on every line:
\\
\noindent
\textbf{OPF-ps:}
\bq
\min_{\tilde x, \phi}\ C(x) & \text{subject to} & \tilde x \in \overline{\mathbb X}, \ \phi\in \mathbb R^m
\label{eq:opf4a.b}
\eq
and that where there are phase shifters only outside $T$:
\\
\noindent
\textbf{OPF-$T$:}
\bq
\min_{\tilde x, \phi}\  C(x) & \text{subject to} &
	  \tilde x \in \overline{\mathbb X}_T, \ \phi \in T^\perp
\label{eq:opf5a.b}
\eq
Let $C^\text{opt}$, $C^\text{ps}$, and $C^{T}$ denote respectively the optimal values of 
OPF \eqref{eq:OPFbfm}, OPF-ps \eqref{eq:opf4a.b}, and OPF-$T$ \eqref{eq:opf5a.b}.
Clearly $C^\text{opt} \geq C^T \geq C^\text{ps}$ since
$\mathbb{\tilde X} \subseteq \overline{\mathbb X}_T \subseteq \overline{\mathbb X}$.
Solving OPF \eqref{eq:OPFbfm}, OPF-ps \eqref{eq:opf4a.b}, or OPF-$T$ \eqref{eq:opf5a.b} is difficult because their feasible sets are nonconvex.

Recall the following sets defined in \cite{Low2014a} for networks without phase shifters:
\bqn
\mathbb X^+ & \!\!\!\! := \!\!\!\! & \{ x \ | \ x \text{ satisfies \eqref{eq:bfmopf} and \eqref{eq:mdf}} \}
\\
\mathbb X_{nc} & \!\!\!\! := \!\!\!\! & \{ x \ | \ x \text{ satisfies \eqref{eq:bfmopf} and 
					\eqref{eq:mdf} with equalities in \eqref{eq:mdf.3}} \}
\\
\mathbb X & \!\!\!\! := \!\!\!\! & \{ x \ | \ x \in \mathbb X_{nc} \text{ and satisfies the cycle
			condition \eqref{eq:cyclecond.1}} \}
\eqn
Note that $\mathbb X$ is defined by the cycle condition without phase shifters $(\phi = 0$ in 
\eqref{eq:cyclecond.ps}).
As explained in \cite[Theorem 9]{Low2014a}, $\mathbb X$ is equivalent to
the feasible set $\mathbb{\tilde X}$ of OPF \eqref{eq:OPFbfm}.  Hence 
$\mathbb{\tilde X} \equiv \mathbb X \subseteq \mathbb X_{nc} \subseteq \mathbb X^+$.
A key result of \cite[Part II]{Farivar-2013-BFM-TPS} is
\begin{theorem}
\label{thm:eqps}
Fix any spanning tree $T$ of $\tilde G$.  Then
$\overline{\mathbb X}_T \ = \ \overline{\mathbb X} \ \equiv  \ \mathbb X_{nc}$.
\end{theorem}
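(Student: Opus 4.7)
The plan is to establish the equivalence in two stages: first, exhibit a canonical bijection between $\overline{\mathbb X}_T$ and $\mathbb X_{nc}$ that uses the spanning tree $T$ to anchor the voltage phases uniquely; second, show that any element of $\overline{\mathbb X}$ is gauge-equivalent to an element of $\overline{\mathbb X}_T$ under a phase rotation that leaves $h(\tilde x)$ fixed. Together these identify the three sets (more precisely, $h(\overline{\mathbb X}) = h(\overline{\mathbb X}_T) = \mathbb X_{nc}$, which is the operative equality since the cost depends on $\tilde x$ only through $x = h(\tilde x)$).

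For the projection $h:\overline{\mathbb X}\to\mathbb X_{nc}$, given $\tilde x$ satisfying \eqref{eq:bfmps} for some $\phi$, set $v_j := |V_j|^2$ and $\ell_{jk} := |I_{jk}|^2$. Then \eqref{eq:mdf.1} is identical to \eqref{eq:bfmps.3}, and $|S_{jk}|^2 = |V_j|^2|I_{jk}|^2 = v_j\ell_{jk}$ from \eqref{eq:bfmps.2} gives equality in \eqref{eq:mdf.3}. To obtain \eqref{eq:mdf.2}, rewrite \eqref{eq:bfmps.1} as $V_k e^{-\ii\phi_{jk}} = V_j - z_{jk} I_{jk}$ and take squared magnitudes; using $V_j^H I_{jk} = S_{jk}^H$ from \eqref{eq:bfmps.2} one obtains $v_k = v_j - 2\,\text{Re}(z_{jk}^H S_{jk}) + |z_{jk}|^2 \ell_{jk}$, which is \eqref{eq:mdf.2}. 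The constraints \eqref{eq:bfmopf} involve only $(v,s)$ and transfer without change, so $h(\tilde x)\in\mathbb X_{nc}$.

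For the inverse construction into $\overline{\mathbb X}_T$, given $x\in\mathbb X_{nc}$ fix $V_0 := \sqrt{v_0}\angle 0^\circ$ and propagate voltage angles along $T$ outward from the root: for each tree edge $(i,j)\in T$ in breadth-first order, put $I_{ij} := S_{ij}^H/V_i^H$ and $V_j := V_i - z_{ij} I_{ij}$, which forces $\phi_{ij}=0$. The identity $|V_i - z_{ij} I_{ij}|^2 = v_i - 2\,\text{Re}(z_{ij}^H S_{ij}) + |z_{ij}|^2 \ell_{ij}$ combined with \eqref{eq:mdf.2} and equality in \eqref{eq:mdf.3} yields $|V_j|^2 = v_j$ automatically; since $T$ spans every node, all $V_j$ are determined uniquely. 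For each non-tree edge $(j,k)\in T^\perp$, the same identity forces $|V_j - z_{jk} I_{jk}| = \sqrt{v_k} = |V_k|$, so $\phi_{jk} := \angle V_k - \angle(V_j - z_{jk} I_{jk}) \mod 2\pi$ is well-defined and makes \eqref{eq:bfmps.1} hold. This produces a unique $\tilde x\in\overline{\mathbb X}_T$ with $h(\tilde x)=x$, so $\overline{\mathbb X}_T \equiv \mathbb X_{nc}$.

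Finally, for $h(\overline{\mathbb X}) = h(\overline{\mathbb X}_T)$, take any $\tilde x\in\overline{\mathbb X}$ with its uniquely determined $\phi$ and apply the gauge transformation $V'_j := V_j e^{\ii\psi_j}$, $I'_{jk} := I_{jk} e^{\ii\psi_j}$, which leaves $S$, $s$, $v$, $\ell$ invariant and hence preserves $h(\tilde x)$. A direct substitution into \eqref{eq:bfmps.1} shows the induced phase shifter transforms as $\phi'_{jk} = \phi_{jk} + \psi_k - \psi_j$, so demanding $\phi'|_T = 0$ (with $\psi_0=0$) becomes the $n\times n$ linear system $B_T \psi = \phi|_T$, which has a unique solution because $T$ is a spanning tree and $B_T$ is invertible. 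The gauge-transformed $\tilde x'$ then lies in $\overline{\mathbb X}_T$ with $h(\tilde x') = h(\tilde x)$, completing the equality. The main obstacle will be the bookkeeping of the angle arithmetic mod $2\pi$ and the verification that the magnitude-consistency identity $|V_j - z_{jk} I_{jk}|^2 = v_k$ holds edge by edge, since that is exactly what makes both the tree propagation and the $T^\perp$ phase shifters well-defined.
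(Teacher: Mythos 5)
Your argument is correct and rests on the same core fact as the paper's proof: the invertibility of the reduced incidence submatrix $B_T$ associated with a spanning tree. Your breadth-first propagation of voltages along $T$ (setting $I_{ij} := S_{ij}^H/V_i^H$ and $V_j := V_i - z_{ij}I_{ij}$, then checking $|V_j|^2 = v_j$ via \eqref{eq:mdf.2} and equality in \eqref{eq:mdf.3}) is exactly the explicit recursive form of the paper's Steps 1--2, which instead solve $B_T\theta = \beta_T$ and read off $\phi_\perp = \mathcal{P}(\beta_\perp - B_\perp B_T^{-1}\beta_T)$ on the non-tree edges; the edge-by-edge magnitude identity you verify is the computation the paper performs around \eqref{eq:vje}--\eqref{eq:aVj}. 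Where you genuinely differ is the final step: the paper shows $\overline{\mathbb X}_T$ and $\overline{\mathbb X}$ coincide by arguing that the cycle condition \eqref{eq:cyclecond.ps} is solvable for some $(\theta,\phi)$ if and only if it is solvable with $\phi_T=0$, whereas you act directly on $\tilde x$ with the gauge rotation $V_j \mapsto V_j e^{\ii\psi_j}$, $I_{jk}\mapsto I_{jk}e^{\ii\psi_j}$ and solve $B_T\psi = \phi|_T$. The two are the same linear algebra in different clothing, but your version surfaces a point the paper glosses over: since \eqref{eq:bfmps.1} determines $\phi$ uniquely from $(V,I)$, the literal set equality $\overline{\mathbb X}_T = \overline{\mathbb X}$ does not hold for the raw variables $\tilde x$ (a nontrivial gauge rotation of a point of $\overline{\mathbb X}_T$ stays in $\overline{\mathbb X}$ but leaves $\overline{\mathbb X}_T$); what is proved, and all that Corollary \ref{coro:eqps} needs, is that the three sets have the same image under $h$, i.e., each is equivalent to $\mathbb X_{nc}$. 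Your reading "$h(\overline{\mathbb X}) = h(\overline{\mathbb X}_T) = \mathbb X_{nc}$" is the right one. The only loose end to tighten is to state explicitly that $I_{jk} := S_{jk}^H/V_j^H$ on the non-tree edges as well, so that \eqref{eq:bfmps.2} and $|I_{jk}|^2=\ell_{jk}$ hold there too before you define $\phi_{jk}$.
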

\vspace{0.1in}

The implication of Theorem \ref{thm:eqps} is that, for a mesh network, when 
a solution of SOCP relaxation \eqref{eq:bfmOPF-socp} attains equalities in
\eqref{eq:mdf.3} (i.e., it is in $\mathbb X_{nc}$), 
then it can be implemented with an appropriate setting of phase
shifters even when the solution does not satisfy the cycle condition 
\eqref{eq:cyclecond.1}.    
Define the problem:
\\
\noindent
\textbf{OPF-nc:}
\bq
\min_{x}\  C(x) & \text{subject to} & x \in \mathbb X_{nc}
\label{eq:opfnc}
\eq
Let $C^\text{nc}$ and $C^\text{socp}$ denote respectively the 
optimal values of  OPF-nc \eqref{eq:opfnc} and OPF-socp \eqref{eq:bfmOPF-socp}.
Theorem \ref{thm:eqps} then implies
\begin{corollary}
\label{coro:eqps}
Fix any spanning tree $T$ of $\tilde G$.  Then
\bee
\item $\mathbb{\tilde X} \subseteq \overline{\mathbb X}_T \ = \ \overline{\mathbb X} \ \equiv  \ \mathbb X_{nc} \ \subseteq \ \mathbb X^+$.
\item $C^\text{opt} \geq C^T = C^\text{ps} = C^\text{nc} \geq C^\text{socp}$.
\eee
\end{corollary}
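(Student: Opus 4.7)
The plan is to derive both parts of Corollary \ref{coro:eqps} as direct consequences of Theorem \ref{thm:eqps}, together with elementary observations about the defining constraints of the sets $\mathbb{\tilde X}$, $\overline{\mathbb X}_T$, $\overline{\mathbb X}$, $\mathbb X_{nc}$, and $\mathbb X^+$, and the fact that the cost depends on $\tilde x$ only through $x = h(\tilde x)$. The argument is essentially bookkeeping: translate a chain of set inclusions/equalities/equivalences into a chain of inequalities/equalities on the corresponding optimal values.

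For part (i), I would establish the chain inclusion by inclusion. The inclusion $\mathbb{\tilde X} \subseteq \overline{\mathbb X}_T$ is immediate from the definitions: any $\tilde x \in \mathbb{\tilde X}$ satisfies \eqref{eq:bfmps} with $\phi = 0$, and $\phi = 0$ trivially belongs to $T^\perp$ for any spanning tree $T$, so $\tilde x \in \overline{\mathbb X}_T$. The equality $\overline{\mathbb X}_T = \overline{\mathbb X}$ and the equivalence $\overline{\mathbb X} \equiv \mathbb X_{nc}$ are exactly the two assertions of Theorem \ref{thm:eqps}. Finally, $\mathbb X_{nc} \subseteq \mathbb X^+$ is immediate from the definitions, since every $x \in \mathbb X_{nc}$ satisfies \eqref{eq:mdf} with equalities in \eqref{eq:mdf.3}, which is a strictly stronger requirement than the inequality version used to define $\mathbb X^+$.

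For part (ii), I would translate each step of the chain into a comparison of optimal values, using the elementary principle that enlarging the feasible set cannot increase the optimal value of a minimization problem. Specifically, $\mathbb{\tilde X} \subseteq \overline{\mathbb X}_T$ gives $C^\text{opt} \geq C^T$; the equality $\overline{\mathbb X}_T = \overline{\mathbb X}$ yields $C^T = C^\text{ps}$; and $\mathbb X_{nc} \subseteq \mathbb X^+$ combined with the fact that both problems minimize the same $C(x)$ yields $C^\text{nc} \geq C^\text{socp}$. The remaining equality $C^\text{ps} = C^\text{nc}$ needs slightly more care, because the problems OPF-ps \eqref{eq:opf4a.b} and OPF-nc \eqref{eq:opfnc} are formulated on different ambient spaces (one over $\tilde x$ together with $\phi$, the other over $x$).

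The main obstacle, such as it is, lies in this last equality: one must verify that the bijection underlying $\overline{\mathbb X} \equiv \mathbb X_{nc}$ in Theorem \ref{thm:eqps} intertwines the cost evaluations. Here I would invoke the standing assumption that $C$ depends on $\tilde x$ only through $x = h(\tilde x)$: for any $\tilde x \in \overline{\mathbb X}$, the cost in OPF-ps is $C(h(\tilde x))$, and under $h$ the image $x = h(\tilde x)$ lies in $\mathbb X_{nc}$ by Theorem \ref{thm:eqps}, with cost $C(x) = C(h(\tilde x))$. Conversely, by Theorem \ref{thm:eqps} every $x \in \mathbb X_{nc}$ arises as $h(\tilde x)$ for some $\tilde x \in \overline{\mathbb X}$ with matching cost. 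Hence the infima over the two feasible sets coincide, giving $C^\text{ps} = C^\text{nc}$ and completing the chain of (in)equalities claimed in part (ii).
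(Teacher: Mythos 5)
Your proposal is correct and follows essentially the same route as the paper, which presents this corollary as an immediate consequence of Theorem \ref{thm:eqps} plus the definitional inclusions $\mathbb{\tilde X} \subseteq \overline{\mathbb X}_T$ and $\mathbb X_{nc} \subseteq \mathbb X^+$. Your extra care on $C^\text{ps} = C^\text{nc}$ — checking that the bijection $h$ underlying $\overline{\mathbb X} \equiv \mathbb X_{nc}$ preserves the cost because $C$ depends on $\tilde x$ only through $x = h(\tilde x)$ — is exactly the point the paper leaves implicit.
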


Hence if an optimal solution $x^\text{socp}$ of OPF-socp \eqref{eq:bfmOPF-socp}
attains equalities in  \eqref{eq:mdf.3} then $x^\text{socp}$ 
solves the problem OPF-nc \eqref{eq:opfnc}.   If it also satisfies the cycle 
condition \eqref{eq:cyclecond.1} then $x^\text{socp} \in \mathbb X$ and it
can be mapped to a unique optimal of OPF \eqref{eq:OPFbfm}.
Otherwise, 
$x^\text{socp}$ can be implemented through an appropriate phase shifter setting $\phi$
and it attains a cost that lower bounds the optimal cost of the original
OPF without tunable phase shifters.
Moreover this benefit can be attained with phase shifters
only outside an arbitrary spanning tree $T$ of $\tilde G$.
The result can help determine if a network with a given set of
phase shifters can be convexified and, if not, where additional phase shifters
are needed for convexification \cite[Part II]{Farivar-2013-BFM-TPS}.

Corollary \ref{coro:eqps} also implies that, if SOCP is exact, then phase shifters cannot
further reduce the cost.   This can help determine when phase shifters
 provide benefit to  system operations.

Hence phase shifters in strategic locations make a mesh network 
behave like a radial network
as far as convex relaxation is concerned.  The results of Section \ref{sec:exact-radial} then imply
\begin{corollary}
Suppose conditions {A1--A2', or A3--A4, or B1--B3, or C1--C2} hold.  
Then any optimal solution of OPF-socp \eqref{eq:bfmOPF-socp}
solves OPF-ps \eqref{eq:opf4a.b} and OPF-$T$ \eqref{eq:opf5a.b}.
\end{corollary}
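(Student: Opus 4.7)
The plan is to combine the local exactness that each sufficient condition already delivers with the mesh-to-tree reduction afforded by Theorem~\ref{thm:eqps}. The argument has three steps.

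First, I would observe that the proofs of the local property under conditions A1--A2', A3--A4, B1--B3, and (after translation via $\beta_{jk}(x)$) C1--C2 are purely local: they show that every optimal $x^\text{socp}$ of OPF-socp~\eqref{eq:bfmOPF-socp} attains equality in \eqref{eq:mdf.3} line by line, and never invoke tree topology to do so. The tree hypothesis in Theorems~\ref{thm:bfmsocp.1}, \ref{thm:bfmsocp.2}, and \ref{thm:pfOPF} enters only downstream to render the cycle condition~\eqref{eq:cyclecond.1} vacuous; this is precisely the point emphasized at the start of Section~\ref{sec:exact-mesh}. Consequently, even for mesh networks, these conditions already yield $x^\text{socp}\in\mathbb X_{nc}$.

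Next, I would combine feasibility $x^\text{socp}\in\mathbb X_{nc}$ with optimality of $x^\text{socp}$ over the larger set $\mathbb X^+\supseteq\mathbb X_{nc}$ to obtain $C^\text{nc}\le C(x^\text{socp})=C^\text{socp}$. Together with $C^\text{nc}\ge C^\text{socp}$ from Corollary~\ref{coro:eqps}, this forces $C^\text{nc}=C^\text{socp}$, so $x^\text{socp}$ is already optimal for OPF-nc~\eqref{eq:opfnc}. Finally, I would apply Theorem~\ref{thm:eqps} to lift $x^\text{socp}$ through the bijection $\mathbb X_{nc}\equiv\overline{\mathbb X}_T$ (for any chosen spanning tree $T$) to a pair $(\tilde x,\phi)$ with $\phi\in T^\perp$, $\tilde x\in\overline{\mathbb X}_T\subseteq\overline{\mathbb X}$, and $h(\tilde x)=x^\text{socp}$. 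Since the cost depends on $\tilde x$ only through $x=h(\tilde x)$, and Corollary~\ref{coro:eqps} gives $C^T=C^\text{ps}=C^\text{nc}=C^\text{socp}$, the lifted $(\tilde x,\phi)$ is optimal for OPF-$T$~\eqref{eq:opf5a.b} and therefore also for OPF-ps~\eqref{eq:opf4a.b}.

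The main obstacle lies in the first step: rigorously verifying that each group of conditions still produces the local equality in \eqref{eq:mdf.3} at every line of a mesh network. For type A the QCQP separability argument underlying Theorem~\ref{thm:exact4qcqp} is formulated on trees, so to reuse it in a mesh setting one has to revisit the dual/KKT construction and check that A2' closes off the degeneracy line by line, independently of cycle structure. For type C the translation from the BIM angle bounds $\underline\theta_{jk}\le\theta_{jk}\le\overline\theta_{jk}$ to the nonconvex constraints $\underline\theta_{jk}\le\beta_{jk}(x)\le\overline\theta_{jk}$ in BFM must be handled with care so that the added constraints do not disturb the Pareto-front exactness argument. Once this is dispatched, the remaining two steps are a direct chaining of Theorem~\ref{thm:eqps} and Corollary~\ref{coro:eqps}.
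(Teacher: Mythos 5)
Your proposal matches the paper's intended argument exactly: the paper gives no separate proof of this corollary, but derives it by asserting (in the opening of Section \ref{sec:exact-mesh}) that the type A/B/C conditions force every OPF-socp optimum into $\mathbb X_{nc}$ ``whether the network is radial or mesh,'' and then chaining Theorem \ref{thm:eqps} and Corollary \ref{coro:eqps} precisely as you do in your second and third steps. The subtlety you flag in your first step is real but is simply asserted rather than reproved in the paper, so your treatment is, if anything, more candid about where the burden of proof lies.
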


\subsection{DC networks}

In this subsection we consider purely resistive dc networks,  
i.e., the impedance $z_{jk}  = r_{jk} = y_{jk}^{-1}$, the power injections 
$s_j = p_j$, and the voltages $V_j$ are real.   
We assume all voltage magnitudes are strictly positive.  Formally:
\bee
\item [D0:] Replace \eqref{eq:bimopf.2} and \eqref{eq:mdf.2} by 
	$0 < \underline V_j \leq V_j \leq \overline V_j$, $j\in N^+$,
	and replace \eqref{eq:opfW.2} by 
	$0 < \underline V_j^2 \leq [W_G]_{jj} \leq \overline V_j^2$, $j\in N^+$. 
\eee

\noindent
\emph{Type A conditions.}
Condition D0 immediately implies that the cycle condition \eqref{eq:cyclecond.1}
in BFM is satisfied by every feasible $x$ of OPF-socp \eqref{eq:bfmOPF-socp}, 
for
\bqn
\beta_{jk}(x) & := & \angle \left(v_j - z_{jk}^H S_{jk} \right)
\ \ = \  \  \angle \left( v_j - r_{jk} \left( r_{jk}^{-1}V_j (V_j - V_k) \right)  \right)
\ \ = \  \  0
\eqn
A3--A4 guarantee that any optimal solution of OPF-socp attains
equality in \eqref{eq:mdf.3} for general mesh networks. 
Hence \cite[Theorem 7]{Low2014a} and Theorem \ref{thm:bfmsocp.1} imply
\begin{corollary}
Suppose A3--A4 and D0 hold.  Then OPF-socp \eqref{eq:bfmOPF-socp} is exact.
\end{corollary}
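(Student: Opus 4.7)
The plan is to verify the two properties required by the definition of exactness for BFM SOCP relaxation (Section \ref{subsec:exact}): every optimal $x^{\text{socp}}$ of \eqref{eq:bfmOPF-socp} must (i) attain equality in \eqref{eq:mdf.3} on every line, and (ii) satisfy the cycle condition \eqref{eq:cyclecond.1}. In the DC setting these decouple, and each reduces to a short argument.

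For (ii), I would make rigorous the computation sketched in the preamble to the corollary. Given any feasible $x$ attaining equality in \eqref{eq:mdf.3}, \cite[Section V-A]{Low2014a} recovers voltages $V$ with $|V_j|^2 = v_j$ and currents $I_{jk} = r_{jk}^{-1}(V_j - V_k)$. Because $z_{jk}$ and $s_j$ are real and $\underline V_j > 0$ by D0, each $V_j$ may be chosen real and positive. Then $S_{jk} = V_j I_{jk}^H = r_{jk}^{-1} V_j(V_j - V_k)$ is real, and
\[
v_j - z_{jk}^H S_{jk} \ =\ V_j^2 - V_j(V_j - V_k) \ =\ V_j V_k \ >\ 0,
\]
so $\beta_{jk}(x) = 0$ on every line and \eqref{eq:cyclecond.1} holds trivially with $\theta = 0$.

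For (i), I would run the local perturbation argument behind Theorem \ref{thm:bfmsocp.1} directly on the DC mesh. Suppose at an optimum $x^{\text{socp}}$ some line $i\to j$ has $v_i\ell_{ij} > S_{ij}^2$ strictly. I would simultaneously reduce $\ell_{ij}$ by a small $\delta > 0$ and reduce $S_{ij}$ by $r_{ij}\delta/2$, which leaves \eqref{eq:mdf.2} on that line invariant, and then absorb the net imbalance in \eqref{eq:mdf.1} by reducing both $s_i$ and $s_j$ by $r_{ij}\delta/2$, leaving all other variables unchanged. The crucial observation is that the DC equations are affine in the \emph{real} variables $(S,\ell,v,s)$, so the perturbation is purely local — no cycle propagation is needed, unlike the AC mesh case. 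A4 supplies the required freedom in $s$, while strict inequality at $(i,j)$ plus continuity keeps the perturbed $x'$ feasible for \eqref{eq:mdf.3}. By A3, the cost strictly decreases (strict monotonicity in $\ell_{ij}$, nondecreasing in the $s_i, s_j$), contradicting optimality of $x^{\text{socp}}$.

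Combining (i) and (ii), every optimal $x^{\text{socp}}$ belongs to $\mathbb X$ and hence maps uniquely to an optimal solution of OPF \eqref{eq:OPFbfm}, proving exactness. The main obstacle — and the reason Theorem \ref{thm:bfmsocp.1} was restricted to radial networks — lies in step (i) for mesh networks: generically a perturbation in $\ell$ would force global rebalancing around cycles. Here D0 and the DC structure remove the cycle-related angle constraints, and A4 lets the rebalancing be absorbed in injections rather than being propagated along branch flows, reducing the mesh argument to the same one-line perturbation that powered the radial case.
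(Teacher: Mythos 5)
Your proposal is correct and follows essentially the same route as the paper: the paper likewise observes that the perturbation argument in the proof of Theorem \ref{thm:bfmsocp.1} is purely local (the radial hypothesis there is used only to dispense with the cycle condition), so A3--A4 force equality in \eqref{eq:mdf.3} on any mesh, and that D0 makes $\beta_{jk}(x)=\angle(V_jV_k)=0$ so the cycle condition \eqref{eq:cyclecond.1} holds trivially with $\theta=0$. Your write-up merely makes both halves of that two-line argument explicit.
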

\vspace{0.1in}


For BIM, consider an OPF as a QCQP \eqref{eq:OPFbim.2} 
where all the matrices are real and symmetric.   
Even though all the QCQP matrices in \eqref{eq:OPFbim.2} satisfy
condition A2', Corollary \ref{coro: OPFSeparatingLine}
 is not directly applicable as its proof constructs a complex (rather than real)
 $V$ from an optimal solution of 
OPF-socp.   However if there are no lower bounds on the power injections,
then only $\Phi_j$ are involved in the QCQP so all their off-diagonal entries 
are negative.   It is then observed in \cite{Rantzer2011} that 
\cite[Theorem 3.1]{KimKojima2003} directly implies (without needing D0)
\begin{corollary}
\label{coro:dcqcqp}
Suppose A1 and A4 hold.   Then OPF-sdp \eqref{eq:bimOPF-sdp} and
OPF-socp \eqref{eq:bimOPF-socp} are exact.
\end{corollary}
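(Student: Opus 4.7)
The plan is to recognize that, under A4, the real QCQP \eqref{eq:OPFbim.2} specialized to the DC setting has exactly the sign pattern required by \cite[Theorem 3.1]{KimKojima2003}, and then invoke that theorem directly for both relaxations.

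First I would specialize the standard-form QCQP \eqref{eq:OPFbim.2} to the DC case. Because reactive power is absent and $y_{jk} = g_{jk} > 0$ is real, the matrices $\Psi_j$ disappear, $V$ ranges over $\mathbb{R}^{n+1}$, and the only matrices that enter are $C_0$, the real symmetric $\Phi_j$, and the diagonal matrices $\pm J_j$. Under A4 the ``lower-bound'' constraints $V^T(-\Phi_j)V \leq -\underline{p}_j$ drop out of \eqref{eq:bimOPF.b2}, leaving only $V^T \Phi_j V \leq \overline{p}_j$. This step is crucial, because $-\Phi_j$ carries \emph{positive} off-diagonals $+\tfrac{1}{2}g_{jk}$ and its absence is precisely what lets the sign hypothesis hold.

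Next I would verify the Kim--Kojima sign hypothesis: every constraint matrix should have nonpositive off-diagonal entries. Using the explicit formulas for $[\Phi_k]_{ij}$ reproduced earlier in the text, each $\Phi_i$ has off-diagonal entries equal to $-\tfrac{1}{2}g_{jk} \leq 0$ on the edges of $G$ and zero elsewhere, and the voltage-constraint matrices $\pm J_j$ are diagonal, so they satisfy the hypothesis vacuously. A1 supplies the positive definiteness of $C_0$ needed for the cost. Thus the real QCQP derived from the DC OPF falls inside the Kim--Kojima framework, \emph{without} any appeal to D0.

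Finally I would apply \cite[Theorem 3.1]{KimKojima2003}, which asserts that for such a real QCQP the $2\times 2$ edgewise PSD SOCP relaxation and the full SDP relaxation have the same optimal value as the original QCQP, attained by rank-$1$ solutions. This yields exactness of both OPF-sdp \eqref{eq:bimOPF-sdp} and OPF-socp \eqref{eq:bimOPF-socp}. The cycle condition \eqref{eq:cyclecond.2} needed for SOCP exactness is then automatic: a real rank-$1$ partial matrix $W_G = VV^T$ has $[W_G]_{jk} = V_j V_k \in \mathbb{R}$, so its angles lie in $\{0, \pi\}$, and traversing any cycle each node contributes an even power of $V_j$, making the sum of angles a multiple of $2\pi$. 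The only real work, and the main conceptual obstacle, lies in confirming the sign-pattern step for a general cost $C_0$ permitted by A1; for the standard DC-OPF cost functions (line loss, generation cost) $C_0$ is a nonnegative combination of the $\Phi_j$ plus a diagonal piece and the sign pattern is inherited automatically, but the general case must be checked directly.
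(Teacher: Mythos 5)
Your proposal is correct and follows essentially the same route as the paper, which likewise notes that under A4 only the matrices $\Phi_j$ (with nonpositive off-diagonal entries $-\tfrac{1}{2}g_{jk}$) and the diagonal $\pm J_j$ remain, and then invokes \cite[Theorem 3.1]{KimKojima2003} directly, without D0. You are in fact slightly more careful than the paper in two places: you verify that the cycle condition \eqref{eq:cyclecond.2} holds automatically for a real rank-1 $W_G$, and you flag that Kim--Kojima also requires the cost matrix $C_0$ to have nonpositive off-diagonals, a hypothesis the paper leaves implicit in A1.
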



\vspace{0.1in}
\noindent
\emph{Type B conditions.}
The following result is proved in \cite{Gan-2014-OPFDC-TPS}.
Consider: 
\bee
\item [B1':] The cost function is $C(x) := \sum_{j=0}^n C_j \left( \text{Re}\, s_j \right)$ 
	with $C_j$ strictly increasing for all $j\in N^+$.  There is no constraint on $s_0$.
\item [B2':] $\overline V_1 = \overline V_2 = \cdots = \overline V_n$; 
	$\mathbb S_j = [\underline p_j, \overline p_j]$ with $\underline p_j < 0$, $j\in N$.
\item [B2'':] $\overline V_j = \infty$ for $j\in N$.
\eee
\begin{theorem}
\label{thm:dcmesh.2}
Suppose at least one of the following holds:
\bi
\item B1, B2'' and D0; or
\item B1', B2' and D0.
\ei
Then OPF-socp \eqref{eq:bimOPF-socp} with the additional constraints 
$W_{jk}\geq 0$, $(j,k)\in E$, is exact.
If, in addition, the problem is convex then its optimal solution is unique.
\end{theorem}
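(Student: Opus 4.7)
The proof proceeds by contradiction: assume $W^*$ is an optimal solution of OPF-socp (with the added $W_{jk}\ge 0$ constraint) and some edge $(j,k)\in E$ has strict PSD slack $(W^*_{jk})^2 < W^*_{jj}W^*_{kk}$, and construct a feasibility-preserving perturbation that strictly reduces the cost. A preliminary observation eliminates the cycle condition: in this DC setting with $W_{jk}\ge 0$ enforced, every $W^*_{jk}$ is a nonnegative real, so $\angle W^*_{jk}=0$ and \eqref{eq:cyclecond.2} holds trivially on every cycle. Exactness therefore reduces to showing the rank-1 equality $(W^*_{jk})^2 = W^*_{jj}W^*_{kk}$ on every edge.

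For the case B1', B2', D0, the most natural perturbation is the purely local increase $W^{(\epsilon)}_{jk}=W^*_{jk}+\epsilon$ with all other entries fixed. This decreases both $p_j$ and $p_k$ by $r_{jk}^{-1}\epsilon$, and by B1' (all $C_i$ strictly increasing) strictly reduces the cost. Feasibility requires only that $p^*_j > \underline p_j$ and $p^*_k > \underline p_k$, which is the easy sub-case. In the binding sub-case one exploits the global power balance $\sum_i p^*_i=\sum_{(l,m)\in E} r_{lm}^{-1}(W^*_{ll}+W^*_{mm}-2W^*_{lm})\ge 0$ together with $\underline p_i<0$ (B2') and the freedom of $s_0$ to show that some node $l$ has strict slack $p^*_l>\underline p_l$; one then chooses a path in $G$ from $\{j,k\}$ to $l$ and increases $W_{jk}$ while modifying diagonal entries and off-diagonal entries along the path so as to re-route the net injection change to $l$. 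The strict monotonicity of $C_l$ still yields a cost reduction; D0 (strictly positive diagonals) and $W_{lm}\ge 0$ ensure enough room for the infinitesimal path modification.

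For the case B1, B2'', D0, only $p_0$ is cost-sensitive (strictly). A purely local edge perturbation with diagonal compensation chosen to keep $p_j, p_k$ fixed turns out to be cost-neutral, because a direct computation shows that it leaves the network loss $\sum_{(l,m)} r_{lm}^{-1}(W_{ll}+W_{mm}-2W_{lm})$ and every $p_i$ (including $p_0$) unchanged. Instead I propagate the modification along a graph path from $(j,k)$ to node $0$: along the path, B2'' ($\overline V_i=\infty$) guarantees unrestricted room on the diagonal entries, so the cumulative effect can be engineered to strictly reduce the network loss and hence $p_0$. The central obstacle in both cases is the construction of this admissible path-direction in a mesh: feasibility along the path requires preservation of all intermediate $2\times 2$ PSD and $W_{lm}\ge 0$ constraints, which is delicate. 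A Farkas-type argument on the linearized constraint system at $W^*$, using that the strict positivity of line resistances $r_{jk}>0$ makes the linearization surjective in the appropriate sense, is the natural device for asserting the existence of such a cost-reducing direction whenever any PSD slack remains in the network.

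Uniqueness under convexity follows by a short additional argument. Suppose $W^{(1)}\ne W^{(2)}$ are both optimal. Then their midpoint $\bar W=(W^{(1)}+W^{(2)})/2$ is optimal by convexity of $C$ and of the feasible set and, by the exactness just established, satisfies $\bar W_{jk}=\sqrt{\bar W_{jj}\bar W_{kk}}$ on every edge. Combined with the same rank-1 equalities for $W^{(1)}$ and $W^{(2)}$, the equality case of AM-GM applied edge-by-edge forces $W^{(1)}_{jj}/W^{(2)}_{jj}=W^{(1)}_{kk}/W^{(2)}_{kk}$ for every $(j,k)\in E$. Since $G$ is connected and $W_{00}=V_0^2$ is fixed, this common ratio must equal $1$ everywhere, yielding $W^{(1)}=W^{(2)}$, a contradiction.
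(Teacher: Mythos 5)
A preliminary remark: this is one of the few results for which the paper's Appendix supplies no proof at all --- the text simply defers to \cite{Gan-2014-OPFDC-TPS} --- so there is no in-paper argument to compare yours against line by line. Judged on its own, your outline gets several things right. The observation that enforcing $W_{jk}\ge 0$ real makes the cycle condition \eqref{eq:cyclecond.2} vacuous, so that exactness reduces to the edge-wise equality $W_{jk}^2=W_{jj}W_{kk}$ (with $W_{jk}=0$ excluded by D0), is correct. The local perturbation $W_{jk}\mapsto W_{jk}+\epsilon$ in the non-binding sub-case of B1'/B2' is correct: it lowers $p_j$ and $p_k$ by $r_{jk}^{-1}\epsilon$, leaves every other injection and every diagonal entry untouched, and preserves the $2\times2$ psd constraint for small $\epsilon$. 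The uniqueness argument is complete and correct; it is essentially the BIM translation of the paper's own proof of Theorem \ref{thm:uniqueBFM}, with the equality case of $\sqrt{a_1b_1}+\sqrt{a_2b_2}\le\sqrt{(a_1+a_2)(b_1+b_2)}$ forcing a common diagonal ratio that connectivity and the fixed $W_{00}=v_0$ pin to $1$.

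The gap is in the exactness half, and it sits exactly where the hypotheses B2' and B2'' must do their work. In both cases you reduce the problem to ``construct a feasible direction that strictly lowers the cost by propagating a modification along a path,'' and then assert its existence via an unspecified Farkas-type argument. You never verify that the path modification simultaneously preserves the $2\times2$ psd constraints, the sign constraints $W_{lm}\ge 0$, and the injection constraints at the \emph{intermediate} nodes of the path, and you never show concretely where $\underline p_j<0$ with equal voltage caps (B2') or $\overline V_j=\infty$ (B2'') enter --- yet these are precisely the hypotheses that distinguish the theorem from a false general statement. The difficulty is sharpest in the B1/B2'' case, where, as you yourself compute, the obvious local move is exactly cost-neutral: B1 places no monotonicity assumption on $C_j$ for $j>0$ and allows arbitrary (possibly nonconvex) injection sets $\mathbb S_j$, so the perturbation must hold every $s_j$, $j\ne 0$, exactly fixed while strictly decreasing $p_0$ alone. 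That is a far more rigid requirement than ``reduce the network loss,'' and a generic Farkas argument on the linearized constraint system will not produce it without exhibiting the specific global construction (a rank-1-restoring modification of the whole voltage profile that keeps all non-root injections invariant). Since that construction is the entire content of the theorem, the proposal as written does not constitute a proof.
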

\vspace{0.1in}
It is possible to enforce B2'' by an affine constraint on the power injections,
similar to (but different from) condition B2 for radial networks; see 
\cite{Gan-2014-OPFDC-TPS} for details.  See also \cite{Tan2014} for
a result on the uniqueness of SOCP relaxation.

\subsection{General AC networks}

Unfortunately no sufficient conditions for exact semidefinite relaxation for
general mesh networks are yet known. 
There are type A conditions
on power injections for exact relaxation only for special cases: 
a lossless cycle or lossless cycle with one chord \cite{Zhang2013},
or a weakly cyclic network (where every line belongs to at most one cycle)
of size 3 \cite{Madani2013}.

We close by mentioning three recent approaches for global optimization
of OPF when the relaxations in this tutorial fail.
First, higher-order semidefinite relaxations on the Lesserre hierarchy for polynomial 
optimization \cite{Lasserre2001} have been applied to solving OPF 
when SDP relaxation fails
\cite{JoszPanciatici2013, Molzahn2014a, Molzahn2014b, Ghaddar2014}.
By going up the hierarchy, the relaxations become  tighter and
their solutions approach a global optimal of the original polynomial
optimization \cite{Lasserre2001, Parrilo2003}.
This however comes at the cost of significantly higher runtime.
Techniques are proposed in \cite{Molzahn2014b, Ghaddar2014}
to reduce the problem sizes, e.g., by exploiting sparsity or adding redundant 
constraints \cite{Waki2006, Lasserre2009, Ghaddar2014}
or applying higher-order relaxations only on (typically small) subnetworks 
where constraints are violated \cite{Molzahn2014b}.

Second, a branch-and bound algorithm is proposed in \cite{Phan2012} 
where a lower bound is computed from the Lagrangian dual of OPF and 
the feasible set subdivision is based on rectangular or ellipsoidal bisection.  
The dual problem is solved using a subgradient algorithm.  Each iteration
of the subgradient algorithm requires minimizing the 
Lagrangian over the primal variables.  This minimization
is separable into two subproblems, one being a convex
subproblem and the other having a nonconvex quadratic objective.
The latter subproblem turns out to be a trust-region problem
that has a closed-form solution.
It is proved in \cite{Phan2012} that the proposed algorithm converges
to a global optimal.
This method is extended in \cite{Gopalakrishnan2012} to include more constraints
and alternatively use SDP relaxation for lower bounding the cost.

Finally a new approach is proposed in \cite{Hijazi2013}
based on convex quadratic relaxation of OPF in polar coordinates.

\section{Conclusion}
\label{sec:conc2}

We have summarized the main sufficient conditions for exact semidefintie
relaxations of OPF as listed in Tables \ref{table:SummaryRadial} and 
\ref{table:SummaryMesh}.   For radial networks these conditions  suggest that SOCP relaxation (and hence SDP and 
chordal relaxations) will likely be exact in practice.  This is corroborated 
by significant numerical experience.   For mesh networks they are applicable only for
special cases: networks that have tunable phase shifters or dc networks
where all variables are real and voltages are nonnegative.   
Even though counterexamples exist where SDP/chordal relaxation is not exact 
for AC mesh networks 
numerical experience seems to suggest that SDP/chordal relaxation tends
to be exact in many cases.  Sufficient conditions that
guarantee exact relaxation for AC mesh networks however remain 
elusive.   The main difficulty is in designing relaxations of the
cycle condition \eqref{eq:cyclecond.2} or \eqref{eq:cyclecond.1}.


\newpage

\section{Appendix: proofs}
\label{app:proofs}

We prove all the main results here.  

\subsection{Proof of Theorem \ref{thm:exact4qcqp} and Corollary \ref{coro:exact4qcqp}:  linear separability}

The proof is from an updated version of \cite{Bose-2012-QCQPt}.
It is equivalent to the argument of \cite{Sojoudi2013} and simpler than
the original duality proof in \cite{Bose-2012-QCQPt}.

\begin{proof}[Proof of Theorem \ref{thm:exact4qcqp}]
Fix any partial matrix $W_G$ that is feasible for SOCP \eqref{eq:socp}.  
We will construct an $x \in \mathbb C^{n+1}$ that satisfies
\bqn
x^H C_l x & \leq & \text{tr } C_l W_G, \ \ \ l = 0, 1, \dots, L
\eqn
i.e., $x$ is feasible for QCQP \eqref{eq:qcqp} and has an equal or lower cost than $W_G$.
Since the minimum cost of QCQP is lower bounded by that of its SOCP relaxation
this means that an optimal solution $x\in \mathbb C^{n+1}$ of QCQP \eqref{eq:qcqp} 
can be obtained from every optimal solution $W_G$ of SOCP \eqref{eq:socp}.

Now $W_G(j,k) \succeq 0$ for every $(j,k)\in E$ implies that $[W_G]_{jj}\geq 0$ for all $j\in N$ and
\bqn
[W_G]_{jj} \, [W_G]_{kk} & \geq & \left| [W_G]_{jk} \right|^2,  \ \ (j,k)\in E
\eqn
 %
Suppose first that $[W_G]_{jj} [W_G]_{kk} = |[W_G]_{jk}|^2$ for all $(j,k)\in E$, i.e., $W_G$ is
$2\times 2$ psd rank-1.  
We will construct an $x\in \mathbb C^{n+1}$ that is feasible for QCQP and has an equal cost.
To construct such an $x$ let $|x_j| := \sqrt{[W_G]_{jj}}$, $j\in N^+$.  
Recall that $G$ is a (connected) tree with node 0 as its root.  Let $\angle x_0 := 0$.  
Traversing the tree starting from the root the angles can be successively assigned:
given $\angle x_j$ at one end of a link $(j,k)$, let $\angle x_k := \angle x_j - \angle [W_G]_{jk}$ at the 
other end.   Then for $l=0, 1, \dots, L$ we have
\bqn
x^H C_l x & = & \sum_{j, k} [C_l]_{jk} \, x_j x_k^H \ = \ \text{tr } C_l W_G
\eqn
Hence $x$ is feasible for QCQP \eqref{eq:qcqp} and has the same cost as $W_G$.

Next suppose $[W_G]_{jj} [W_G]_{kk} > |[W_G]_{jk}|^2$ for some $(j,k)$, i.e., $W_G$ is 
$2\times 2$ psd but not $2\times 2$  rank-1. 
We will 
\bee
\item Construct an $\hat{W}_G$ that is $2\times 2$ psd rank-1.
\item Show that A2 implies 
	\bq
	\text{tr } C_l \hat{W}_G & \leq & \text{tr } C_l W_G, \ \ l = 0, 1, \dots, L
	\label{eq:CWineq}
	\eq
\eee
Then an $x\in \mathbb C^{n+1}$ can be constructed from $\hat W_G$ as in the case above
and step 2 ensures that for $l = 0, 1, \dots, L$
\bqn
x^H C_l x \ = \ \text{tr } C_l \hat{W}_G \ \leq \ \text{tr } C_l W_G
\eqn
i.e., $x$ is feasible for QCQP \eqref{eq:qcqp} and has an equal or lower cost than $W_G$.

To construct such an $\hat{W}_G$ let $[\hat W_G]_{jj} = [W_G]_{jj}$, $j\in N^+$.  For $(j,k)\in E$ let
\bqn
[\hat W_G]_{jk} - [W_G]_{jk} & =: & r_{jk} e^{- \ii \left(\frac{\pi}{2} - \alpha_{jk} \right)}
\eqn
for some $r_{jk}>0$ to be determined and $\alpha_{jk}$ in  assumption A2.
For $\hat{W}_G$ to be $2\times 2$ psd rank-1 we need to choose $r_{jk}>0$ such that 
$[\hat W_G]_{jj} [\hat W_G]_{kk} = \left| [\hat W_G]_{jk} \right|^2$ for all $(j,k)\in E$, i.e.,
\bqn
[W_G]_{jj}\, [W_G]_{kk} & = & \left| [W_G]_{jk} + r_{jk} e^{- \ii \left(\frac{\pi}{2} - \alpha_{jk} \right)} \right|^2
\eqn
or
\bqn
r_{jk}^2 + 2b\, r_{jk} - c & = & 0
\eqn
where
\bqn
b & := & \text{Re}\left( [W_G]_{jk} \, e^{\ii \left(\frac{\pi}{2} - \alpha_{jk} \right)} \right)
\\
c & := & [W_G]_{jj}\, [W_G]_{kk} - \left| [W_G]_{jk} \right|^2 \ > \ 0
\eqn
Therefore setting $r_{jk} := \sqrt{b^2 + c} - b > 0$ yields an $\hat W_G$ that is $2\times 2$ psd rank-1.

To show that $\hat W_G$ is feasible for SOCP \eqref{eq:socp} and has an equal or lower cost than
$W_G$, we have for $l=0, 1, \dots, L$,
\bqn
\text{tr } C_l \hat W_G - \text{tr } C_l W_G
& = & \text{tr } C_l \, \left( \hat W_G - W_G \right)
\\
& = & \sum_{(j,k)\in E} [C_l]_{jk} \left( [\hat W_G]_{jk} - [W_G]_{jk} \right)^H
\\
& = & 2\, \sum_{j<k} \text{Re} \left( [C_l]_{jk} \cdot r_{jk} \, e^{\ii \left(\frac{\pi}{2} - \alpha_{jk} \right)}  \right)
\\
& = & 2\, \sum_{j<k} \left| [C_l]_{jk} \right| \, r_{jk} \ \cos \left( \angle [C_l]_{jk} + \frac{\pi}{2} - \alpha_{jk} \right) 
\\
& \leq & 0
\eqn
where the last inequality follows because assumption A2 implies
\bqn
\frac{\pi}{2} & \leq \ \, \angle [C_l]_{jk} + \frac{\pi}{2} - \alpha_{jk} \ \, \leq & \frac{3\pi}{2}
\eqn
and therefore $\cos \left( \angle [C_l]_{jk} + \frac{\pi}{2} - \alpha_{jk} \right) \leq 0$.
This completes the proof.
\end{proof}

\begin{proof}[Proof of Corollary \ref{coro:exact4qcqp}]
A1 implies that the objective function of SOCP \eqref{eq:socp} is strictly convex
and hence has a unique optimal solution.
Suppose $W_G$ is an optimal solution of SOCP \eqref{eq:socp} but 
$[W_G]_{jj} [W_G]_{kk} > |[W_G]_{jk}|^2$ for some $(j,k)$, i.e., $W_G$ is 
$2\times 2$ psd but not $2\times 2$ psd rank-1.   Then the above constructs 
another feasible solution $\hat{W}_G$ with equal cost.  This contradicts the uniqueness
of the optimal solution of SOCP \eqref{eq:socp}, and hence $W_G$ must be 
$2\times 2$ psd rank-1.
\end{proof}

\subsection{Proof of Theorem \ref{thm:bfmsocp.1}: no injection lower bounds (BFM)}
\label{subsec:pfbfmsocp.1}

The proof  is from \cite[Part I]{Farivar-2013-BFM-TPS}.  
\begin{proof}
Fix any optimal solution $x := (S, \ell, v, s) \in \mathbb R^{3(m+n+1)}$ of OPF-socp in the branch
flow model.  Since the network is radial, the cycle condition is vacuous and we only need to show that
$x$ attains equality in \eqref{eq:mdf.3} on all lines $j\rightarrow k \in \tilde E$.   For the sake of 
contradiction assume this is violated on $j\rightarrow $k, i.e., 
\bq
v_j \ell_{jk} & > & |S_{jk}|^2
\label{eq:pf.2}
\eq
We will construct an $\hat x$ that is feasible for OPF-socp and attains a strictly lower cost, contradicting 
that $x$ is optimal.

For an $\epsilon>0$ to be determined below, consider the following $\hat x$ obtained by modifying only
the current $\ell_{jk}$ and power flows $S_{jk}$ on line $j\rightarrow k$ and the injections $s_j, s_k$
at two ends of the line: 
\bqn
\hat \ell_{jk} & := & \ell_{jk} - \epsilon
\\
\hat S_{jk} & := & S_{jk} - z_{jk} \epsilon / 2
\\
\hat s_j & := & s_j - z_{jk} \epsilon / 2
\\
\hat s_k & := & s_k - z_{jk} \epsilon / 2
\eqn
and $\hat v \ := \ v$, $\hat \ell_{il} \ := \ \ell_{il}$ and $\hat S_{il} \ := \ S_{il}$ for $(i,l) \neq (j,k)$, 
 $\hat s_i := s_i$ for $i\neq j, k$.
 By assumption A3 the objective function $C(x)$ is strictly increasing in $\ell$ and hence $\hat x$ has
 a strictly lower cost than $x$.    It suffices to show that there exists an $\epsilon>0$ such that $\hat x$
 is feasible for OPF-socp, i.e., $\hat x$ satisfies \eqref{eq:mdf} and \eqref{eq:bfmopf}.
 
Assumption A4 ensures that $\hat x$ satisfies \eqref{eq:bfmopf}.  Further $\hat x$ satisfies \eqref{eq:mdf.1} 
at buses $i\neq j, k$, and satisfies \eqref{eq:mdf.2} and \eqref{eq:mdf.3} over lines $(i,l) \neq (j,k)$.   
We now show that $\hat x$ also satisfies \eqref{eq:mdf.1} at buses $j$ and $k$ and satisfies
\eqref{eq:mdf.2} and \eqref{eq:mdf.3} over line $(j,k)$.

For \eqref{eq:mdf.1} at bus $j$, we have (adopting the graph orientation where every link
points away from node 0):
\bqn
\sum_{l: j\rightarrow l} \hat S_{jl} & = & 
\sum_{l\neq k: j\rightarrow l} S_{jl}  + \left( S_{jk} - z_{jk}\epsilon /2 \right)
\ \ = \ \ 
S_{ij} - z_{ij} \ell_{ij} + s_j - z_{jk}\epsilon /2
\ \, = \, \  \hat S_{ij} - z_{ij} \hat \ell_{ij} + \hat s_j 
\eqn
as desired.  
For \eqref{eq:mdf.1} at $k$, we have
\bqn
\sum_{l: k\rightarrow l} \hat S_{kl} & = & \sum_{l: k\rightarrow l} S_{kl} 
\ = \ 
S_{jk} - z_{jk} \ell_{jk} + s_k 
\ \ = \ \  \hat S_{jk} - z_{jk} \hat \ell_{jk} + \hat s_k 
\eqn
as desired.
For \eqref{eq:mdf.2} over line $(j,k)$, we have
\bqn
\hat v_j - \hat v_k & = & v_j - v_k 
\ \ = \ \  2\, \text{Re} \left(z_{jk}^H S_{jk} \right) - |z_{jk}|^2 \ell_{jk}
\ \ = \ \  2\, \text{Re} \left(z_{jk}^H \hat S_{jk} \right) - |z_{jk}|^2 \hat \ell_{jk}
\eqn
as desired.
For \eqref{eq:mdf.3} over line $(j,k)$, we have 
\bqn
\hat v_j \hat \ell_{jk}  - \left| \hat S_{jk} \right|^2 & = & 
- \frac{\left| z_{jk} \right|^2}{4} \epsilon^2 \ - \,  \left( \! v_j -  \text{Re}\left( z_{jk}^H S_{jk} \right) \right)\epsilon
\ + \  \left( \ell_{jk}  v_j - \left| S_{jk} \right|^2 \right) 
\eqn
Hence \eqref{eq:pf.2} implies that we can always choose an $\epsilon>0$ 
such that $\hat v_j \hat \ell_{jk}  = \left| \hat S_{jk} \right|^2$.
This completes the proof.
\end{proof}
\vspace{0.1in} 

If the cost function $C(x)$ in A3 is only nondecreasing, rather than strictly increasing,
in $\ell$, then A3--A4 still guarantee that all optimal solutions of OPF \eqref{eq:OPFbfm}
are optimal for OPF-socp \eqref{eq:bfmOPF-socp}, but OPF-socp \eqref{eq:bfmOPF-socp} 
may have optimal solutions $x$ that maintain strict inequalities in \eqref{eq:mdf.3}.  
Even in this case, however, the above proof constructs from $x$
an optimal solution $\hat x$ of OPF-socp that attains equalities in
\eqref{eq:mdf.3} from which an optimal solution $\tilde x$ of OPF \eqref{eq:OPFbfm}
can be recovered.

\subsection{Proof of Theorem \ref{thm:bfmsocp.2}: voltage upper bounds}

The proof here is from \cite{Gan-2014-BFMt-TAC} with a slightly different presentation.   
Given an optimal solution $x$ that maintains a strict inequality in \eqref{eq:mdf.3}, 
the proof  in Section \ref{subsec:pfbfmsocp.1} of Theorem \ref{thm:bfmsocp.1} by contradiction 
constructs another feasible solution $\hat x$ that incurs a strictly smaller cost, contradicting
the optimality of $x$.   The modification is over
a single line over which $x$ maintains a strict inequality in \eqref{eq:mdf.3}.
The proof of Theorem \ref{thm:bfmsocp.2} is also by contradiction but, unlike that of 
Theorem \ref{thm:bfmsocp.1},
the construction of $\hat x$ from $x$ involves modifications on multiple lines, 
propagating from the line that is closest to bus 0 where \eqref{eq:mdf.3} holds with strict 
inequality all the way to bus 0.   The proof relies crucially 
on the recursive structure of the branch flow model \eqref{eq:gdf}.

\begin{proof}[Proof of Theorem \ref{thm:bfmsocp.2}]
To simplify notation we only prove the theorem
for the case of a linear network representing a primary feeder without laterals.
The proof for a general tree network follows the same idea but with more cumbersome notations;
see \cite{Gan-2014-BFMt-TAC} for details. 
We adopt the graph orientation where every
link points \emph{towards} the root node 0.
The notation for the linear network is explained in Figure \ref{fig:LinearNk} 
(recall that we refer to a link $j\rightarrow k$ by $j$ and index the associated
variables $z_{jk}, S_{jk}, \ell_{jk}$ with $j$).\footnote{Note that
$m$ in this subsection does not denote the number of edges in $\tilde G$, which is $n$.}
\begin{figure}[htbp]
\centering
\includegraphics[width=0.55\textwidth]{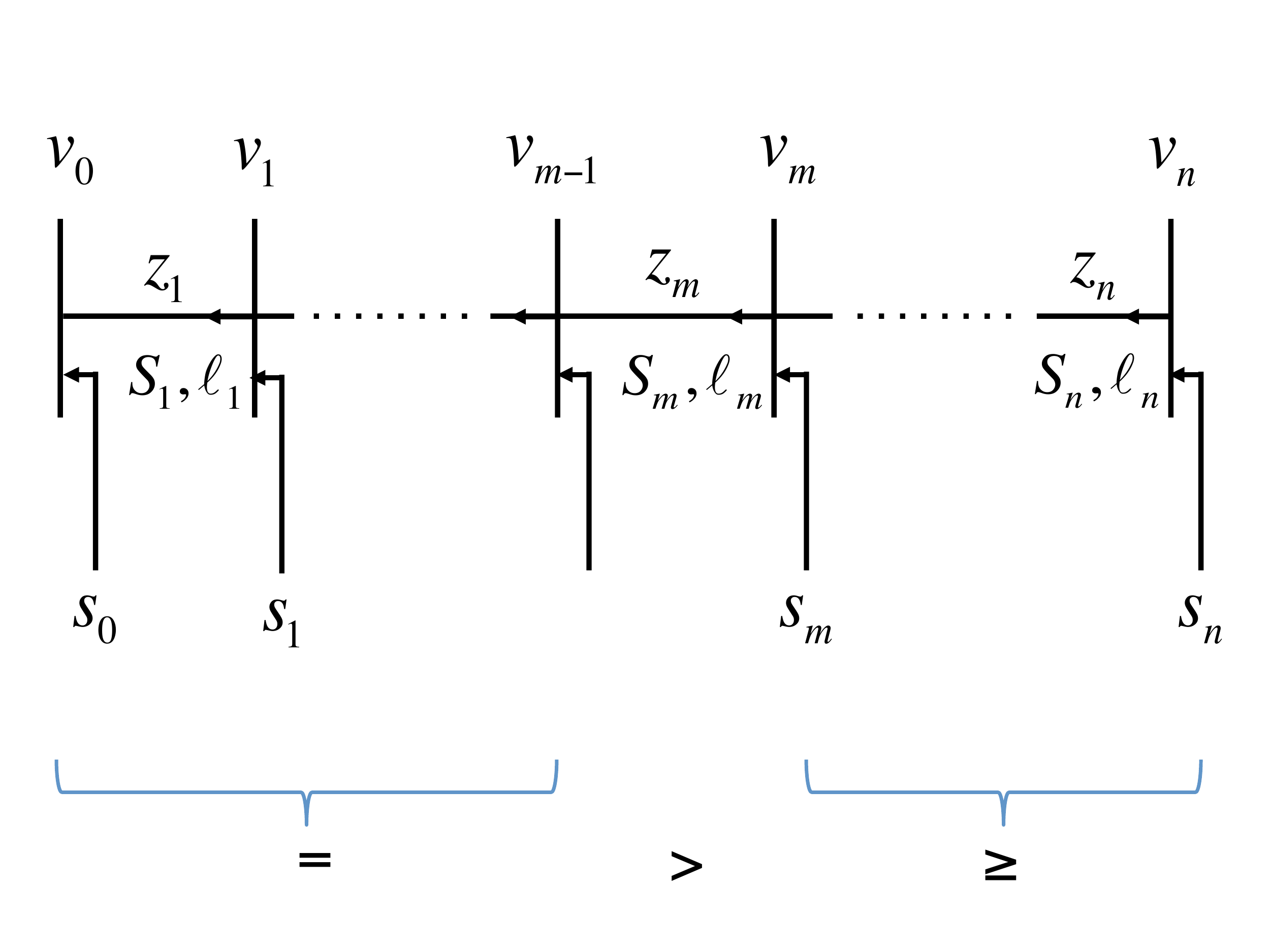}
\caption{Linear network and notations.  Line $m$ in the proof is the line closest to bus 0
	where the inequality in \eqref{eq:lndf.socp} is strict, i.e., \eqref{eq:lndf.socp} holds
	with equality at lines $j=1, \dots, m-1$, strict inequality at line $m$, and inequality 
	at lines $j=m+1, \dots, n$.
}
\label{fig:LinearNk}
\end{figure}
With this notation the branch flow model \eqref{eq:gdf} is the following recursion:
\begin{subequations}
\bq
{S}_{j-1} & = &  {S}_{j} - z_{j} \ell_{j} + s_{j-1}, \qquad\qquad\quad\! j = 1, \dots, n
\label{eq:lndf.1}
\\
v_{j-1} & = & v_j - 2\, \text{Re} \left(z_{j}^H  S_{j} \right) + |z_{j}|^2 \ell_{j}, \quad j = 1, \dots, n
\label{eq:lndf.2}
\\
v_j \ell_{j}  & = & |S_{j}|^2, \qquad\qquad\qquad\qquad\quad\ j = 1, \dots, n
\label{eq:lndf.3}
\\
S_n & = & s_n, \qquad  S_0 := 0
\label{eq:lndf.4}
\eq
\label{eq:lndf}
\end{subequations}
where $v_0$ is given.    The SOCP relaxation of \eqref{eq:lndf.3} is:
\bq
v_j \ell_{j}  & \geq & |S_{j}|^2, \qquad j = 1, \dots, n
\label{eq:lndf.socp}
\eq
OPF on the linear network then becomes ($s_0$ is unconstrained by assumption B1):
\\
\noindent
\textbf{OPF:}
\begin{subequations}
\bq
 \underset{x} {\text{min}}   & &  C(x) := \sum_{j=0}^n C_j\left( \text{Re } s_j \right)
\\
 \text{subject to} & &  x \text{ satisfies \eqref{eq:cnstrs} and \eqref{eq:lndf} }
\eq
\label{eq:lnOPF}
\end{subequations}
and its SOCP relaxation becomes:
\\
\noindent
\textbf{OPF-socp:}
\begin{subequations}
\bq
\underset{x} {\text{min}}    & &  C(x)  :=  \sum_{j=0}^n C_j\left( \text{Re } s_j \right)
\nonumber
\\
\text{subject to}  & & x \text{ satisfies  \eqref{eq:cnstrs} and }
				 \text{\eqref{eq:lndf}  with \eqref{eq:lndf.3} replaced by \eqref{eq:lndf.socp}}
\eq
\label{eq:lnSOCP}
\end{subequations}
For the linear network assumption B3 reduces:
\bee
\item [B3': ] $\underline A_j \cdots \underline A_{k} \ z_{k+1} > 0$ for $1 \leq j \leq k < n$.
\eee

Our goal is to prove OPF-socp \eqref{eq:lnSOCP} is exact, i.e., every optimal solution 
of \eqref{eq:lnSOCP} attains equality in \eqref{eq:lndf.socp} and hence is also optimal
for OPF \eqref{eq:lnOPF}.   Suppose
on the contrary that there is an optimal solution $x := (S, \ell, v, s)$ of OPF-socp
\eqref{eq:lnSOCP} that violates \eqref{eq:lndf.3}.
We will construct another feasible point $\hat x := (\hat S, \hat \ell, \hat v, \hat s)$ of 
OPF-socp \eqref{eq:lnSOCP} that has a strictly lower cost than $x$, contradicting 
the optimality of $x$.

Let $m := \min\, \{ j\in N \ |\ v_j \ell_j > |S_j|^2 \}$ be the closest link from bus 0 where \eqref{eq:lndf.3}
is violated; see Figure \ref{fig:LinearNk}.
Pick any $\epsilon_m \in (0, \ell_m - |S_m|^2/v_m]$ and construct $\hat x$ as follows:
\bee
\item $\hat s_j := s_j$ for $j\neq 0$.
\item For $\hat S, \hat \ell, \hat s_0$:
	\bi
	\item For $j = n, \dots, m+1$: $\hat S_j := S_j$ and $\hat \ell_j := \ell_j$.
	\item For $j = m$: $\hat S_m := S_m$ and $\hat \ell_m := \ell_m - \epsilon_m$.
	\item For $j = m-1, \dots, 1$: 
		\bqn
		\hat S_j & := & \hat S_{j+1} - z_{j+1} \hat \ell_{j+1} + \hat s_j
		\\
		\hat \ell_j & :=  & \frac{|\hat S_j|^2}{v_j}
		\eqn
	\item $\hat s_0 := - \hat S_1 + z_1 \hat \ell_1$.
	\ei
\item $\hat v_0 := v_0$.   For $j=1, \dots, n$, 
	\bqn
	\hat v_j & := & \hat v_{j-1} +  2\, \text{Re} \left(z_{j}^H  \hat S_{j} \right) - |z_{j}|^2 \hat \ell_{j}
	\eqn
\eee
Notice that the denomintor in $\hat \ell_j$ is defined to be 
$v_j$, not $\hat v_j$.  This decouples the recursive construction of $(\hat S_j, \hat \ell_j)$
and $\hat v_j$ so that the former propagates from bus $n$ towards bus 1 while the latter
propagates in the opposite direction.   

By construction $\hat x$ satisfies \eqref{eq:lndf.1},  \eqref{eq:lndf.2}, \eqref{eq:lndf.4},
and \eqref{eq:constraints}.   We  only have to prove that $\hat x$ satisfies \eqref{eq:boxv}
and \eqref{eq:lndf.socp}.   Hence the proof of Theorem \ref{thm:bfmsocp.2} is complete
after Lemma \ref{lemma:dv} is established, which asserts that $\hat x$ is feasible and has
a strictly lower cost under assumptions B1, B2, B3'.
\begin{lemma}
\label{lemma:dv}
Under the conditions of Theorem \ref{thm:bfmsocp.2} $\hat x$ satisfies
\bee
\item $C(\hat x) \ < \ C(x)$.
\item $\hat v_j \hat \ell_j \ \geq \  \left| \hat S_j \right|^2$, $j\in N$.
\item $\underline{v_j} \ \leq \ \hat v_j \ \leq  \ \overline v_j$, $j\in N$.
\eee
\end{lemma}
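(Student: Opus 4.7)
The plan is to propagate the perturbation from the ``bad'' line $m$ upstream toward bus $0$ using the backward recursion that defines $\hat x$, and then propagate the voltage correction forward from bus $0$. Write $\Delta S_j := \hat S_j - S_j$, $\Delta \ell_j := \hat \ell_j - \ell_j$, $\Delta v_j := \hat v_j - v_j$, treating $S_j$ and $\Delta S_j$ as real $2$-vectors throughout. By construction, $\Delta S_j = \Delta \ell_j = 0$ for $j \geq m+1$, $\Delta S_m = 0$, $\Delta \ell_m = -\epsilon_m$, and $\Delta v_0 = 0$. Since $\hat s_i = s_i$ for $i \neq 0$, the power-balance recursion and the tight SOCP identity $\hat \ell_j = |\hat S_j|^2/v_j$ (active for $j < m$) combine into
\begin{eqnarray*}
\Delta S_{j-1} & = & A_j(S_j, v_j)\, \Delta S_j \ - \ \frac{|\Delta S_j|^2}{v_j}\, z_j,
\\
\Delta \ell_j & = & \frac{2\, S_j^T \Delta S_j + |\Delta S_j|^2}{v_j},
\end{eqnarray*}
for $j = 1, \dots, m-1$, where $A_j(S_j, v_j) := I - (2/v_j)\, z_j S_j^T$ is the exact Jacobian; the voltage deltas satisfy the forward recursion $\Delta v_j = \Delta v_{j-1} + 2\, \text{Re}(z_j^H \Delta S_j) - |z_j|^2 \Delta \ell_j$.

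The technical core is a downward inductive claim, from $j = m-1$ to $j = 1$: $\Delta S_j$ is componentwise positive and $\Delta \ell_j \leq 0$. The base $\Delta S_{m-1} = \epsilon_m z_m \geq 0$ is immediate. For the inductive step I first establish the componentwise matrix inequality $A_j(S_j, v_j) \geq \underline A_j$, which follows from $v_j \geq \underline v_j$, $z_j \geq 0$, and $S_j \leq [S_j^{\text{lin}}(\overline s)]^+$ (the latter via \eqref{eq:linbounds} and $s \leq \overline s$). Assumption B3 delivers componentwise positivity of the worst-case iterated product $\underline A_{j+1} \cdots \underline A_{m-1} z_m$; since the excess $A_j - \underline A_j = z_j \cdot (\text{nonneg row})$ is a rank-$1$ nonnegative perturbation, its action on a componentwise positive vector preserves positivity, so $A_{j+1} \cdots A_{m-1} z_m$ inherits positivity one factor at a time. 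Consequently $\Delta S_{j-1} > 0$ to leading order in $\epsilon_m$, and the quadratic correction $-|\Delta S_j|^2 z_j/v_j$ is componentwise nonpositive and only reinforces the sign. The inequality $\Delta \ell_j \leq 0$ then follows from the identity $\Delta \ell_j = (S_j + \hat S_j)^T \Delta S_j/v_j$ and the same Jacobian bookkeeping, provided $\epsilon_m$ is chosen sufficiently small --- which suffices to derive the contradiction with optimality of $x$ in Theorem \ref{thm:bfmsocp.2}.

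Given the claim, the three assertions of the lemma come out in sequence. For item (1), summing the per-bus real power balances gives $\text{Re}(\hat s_0 - s_0) = \sum_{j=1}^n r_j\, \Delta \ell_j = -r_m \epsilon_m + \sum_{j=1}^{m-1} r_j \Delta \ell_j \leq -r_m \epsilon_m < 0$, so B1 and strict monotonicity of $C_0$ deliver $C(\hat x) < C(x)$. Next, a forward induction using $\Delta S_j \geq 0$, $z_j \geq 0$, and $\Delta \ell_j \leq 0$ for $j \leq m$ shows $\Delta v_j \geq 0$ for all $j$; this immediately gives the lower bound in item (3) and also delivers item (2) (for $j < m$, $\hat v_j \hat \ell_j = (\hat v_j/v_j)\, |\hat S_j|^2 \geq |\hat S_j|^2$; for $j \geq m$, $\hat v_j \hat \ell_j \geq v_j \hat \ell_j \geq |\hat S_j|^2$ by the choice of $\epsilon_m$ and the feasibility of $x$). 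For the upper voltage bound, item (2) makes $\hat x$ a feasible point of \eqref{eq:gdf}, so \eqref{eq:linbounds} yields $\hat v_j \leq v_j^{\text{lin}}(\hat s) = v_j^{\text{lin}}(s) \leq \overline v_j$, where the middle equality uses $\hat s_i = s_i$ for $i \geq 1$ together with the independence of $v_j^{\text{lin}}$ from $s_0$ for $j \geq 1$, and the last inequality is B2.

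The hardest step is the inductive claim in the second paragraph, since $\underline A_j$ need not be componentwise nonneg and iterated componentwise matrix comparisons therefore do not propagate automatically. Assumption B3 is calibrated precisely to circumvent this obstruction: it asserts directly that the worst-case iterated product is componentwise positive, so the actual product, obtained by upgrading $\underline A$ to $A$ one factor at a time via a rank-$1$ nonnegative perturbation, inherits the same positivity. The other delicate point is the nonlinear correction $|\Delta S_j|^2 z_j/v_j$ in the backward recursion, handled by taking $\epsilon_m$ sufficiently small so that leading-order analysis controls the sign of all $\Delta S_j$ and $\Delta \ell_j$.
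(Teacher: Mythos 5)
Your overall architecture (backward propagation of $\Delta S_j$ from line $m$ to bus $0$, then forward propagation of $\Delta v_j$, then reading off the three assertions) matches the paper's, but two load-bearing steps are wrong or missing. First, your claim that $\Delta \ell_j \leq 0$ for $j < m$ is generally false: since \eqref{eq:lndf.3} is tight at these lines, $\Delta \ell_j = (\hat S_j + S_j)^T \Delta S_j / v_j$, which is \emph{positive} whenever the flows are in the normal (nonnegative) direction and $\Delta S_j > 0$. Both your proof of item (1) (via $\mathrm{Re}(\hat s_0 - s_0) = \sum_j r_j \Delta\ell_j$) and your forward induction for $\Delta v_j \geq 0$ rest on this false sign. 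The paper gets $\hat s_0 < s_0$ directly from $\Delta S_0 = \tilde A_1 \cdots \tilde A_{m-1}(\epsilon_m z_m) > 0$ without any claim about individual $\Delta\ell_j$, and for the voltages it eliminates $\Delta\ell_j$ by substituting $|z_j|^2\Delta\ell_j = \mathrm{Re}\,(z_j^H\Delta S_j - z_j^H\Delta S_{j-1} + z_j^H\Delta s_{j-1})$ into \eqref{eq:lndf.2}, which turns $\Delta v_j - \Delta v_{j-1}$ into a sum of manifestly nonnegative terms. Relatedly, your statement that the quadratic correction $-|\Delta S_j|^2 z_j/v_j$ ``only reinforces the sign'' is backwards: it is componentwise nonpositive and works \emph{against} the positivity of $\Delta S_{j-1}$. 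The paper sidesteps this by the mean value theorem, writing $\Delta\ell_j = \frac{2}{v_j}\tilde S_j^T \Delta S_j$ with $\tilde S_j$ between $S_j$ and $\hat S_j$, so the recursion $\Delta S_{j-1} = \tilde A_j \Delta S_j$ is exactly linear and the result holds for every $\epsilon_m \in (0, \ell_m - |S_m|^2/v_m]$, not just small ones.

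Second, and more fundamentally, your positivity induction has a gap that you yourself name but do not close. You correctly observe that $A_j - \underline A_j = 2 z_j \delta_j^T$ with $\delta_j \geq 0$ and that $\underline A_j$ is not componentwise nonnegative, so left-multiplication by $\underline A_j$ (or $A_j$) does not preserve componentwise inequalities. The ``upgrade one factor at a time'' argument then fails at the second step: from $A_{j+2}u \geq \underline A_{j+2}u > 0$ you cannot conclude $A_{j+1}A_{j+2}u \geq A_{j+1}\underline A_{j+2}u$, precisely because $A_{j+1}$ is not a nonnegative matrix, and B3 only certifies positivity of the pure $\underline A$-products, not of the mixed products that arise during the upgrade. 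The paper's resolution is the identity of Lemma \ref{lemma:dw}, $w(t;\tau) - \underline w(t;\tau) = \sum_{t'=t+1}^{\tau} a(t';\tau)\, w(t;t'-1)$ with $a(t';\tau) = 2\delta_{t'}^T \underline w(t';\tau) > 0$, which expresses the gap as a positive combination of \emph{shorter} trajectories of the upgraded system itself; a double induction on trajectory length (Lemma \ref{lemma:dw+}) then yields $w \geq \underline w > 0$. This expansion is the essential idea your proposal is missing, and without it (or an equivalent device) the inductive claim $\Delta S_j > 0$ is not established.
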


\vspace{0.2in}
To simplify the notation redefine $S_0 := -s_0$ and $\hat S_0 := - \hat s_0$.
Then for $j\in N^+$ define $\Delta S_j := \hat S_j - S_j$ and $\Delta v_j := \hat v_j - v_j$.
The key result that leads to Lemma \ref{lemma:dv} is:
\bqn
\Delta S_j \ \geq \ 0  & \text{ and } & \Delta v_j \ \geq \ 0
\eqn
The first inequality is stated more precisely in Lemma \ref{lemma:dS} and proved 
after the proof of Lemma \ref{lemma:dv}.
\begin{lemma}
\label{lemma:dS}
Suppose $m>1$ and B3' holds.   
Then $\Delta S_j \geq 0$ for $j \in N^+$ with $\hat S_j > S_j$ for $j = 0, \dots, m-1$.
In particular $\hat s_0 < s_0$.
\end{lemma}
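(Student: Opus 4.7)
The plan is to prove $\Delta S_j > 0$ componentwise for $j = 0, 1, \dots, m-1$ by downward induction on $j$ (for $j \geq m$ the claim $\Delta S_j = 0$ is immediate from the construction). The base case $\Delta S_{m-1} = z_m\epsilon_m > 0$ follows from $\hat S_{m-1} - S_{m-1} = z_m\epsilon_m$ together with $r_m, x_m > 0$.

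First I would convert the nonlinear recursion into a linear one. For $j+1 < m$, the definition of $m$ gives $\ell_{j+1} = |S_{j+1}|^2/v_{j+1}$, and by construction $\hat\ell_{j+1} = |\hat S_{j+1}|^2/v_{j+1}$; combining $\Delta S_j = \Delta S_{j+1} - z_{j+1}\Delta\ell_{j+1}$ with the algebraic identity $|\hat S_{j+1}|^2 - |S_{j+1}|^2 = (S_{j+1}+\hat S_{j+1})^T \Delta S_{j+1}$ yields the exact linear recursion $\Delta S_j = B_{j+1}\Delta S_{j+1}$, where $B_{j+1} := I - v_{j+1}^{-1} z_{j+1}(S_{j+1}+\hat S_{j+1})^T$. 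Iterating gives $\Delta S_j = B_{j+1} B_{j+2}\cdots B_{m-1}\, z_m\,\epsilon_m$.

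The second step is to show that $B_k \geq \underline A_k$ componentwise. The linearization bound \eqref{eq:linbounds} gives $S_k \leq S_k^{\text{lin}}(\bar s)$, and the same argument applied to the perturbed system (whose downstream injections are unchanged, $\hat s_i = s_i$ for $i\neq 0$, and whose line losses $z_i\hat\ell_i$ remain nonneg) gives $\hat S_k \leq S_k^{\text{lin}}(\bar s)$; feasibility of $x$ gives $v_k \geq \underline v_k$. A direct calculation then writes $B_k - \underline A_k = z_k \rho_k^T$ as a rank-one matrix with $z_k > 0$ and $\rho_k^T w \geq 0$ for every $w \geq 0$ (after a short sign analysis on $(S_k+\hat S_k)^T w$), so in particular $B_k w \geq \underline A_k w$ componentwise whenever $w \geq 0$.

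The third and main step---and the main obstacle---is to chain these one-step bounds into $\Delta S_j \geq \underline A_{j+1}\cdots\underline A_{m-1}\, z_m\,\epsilon_m$, whose right-hand side is strictly positive by B3'. The difficulty is that neither $B_k$ nor $\underline A_k$ is itself a nonnegativity-preserving linear map (their off-diagonals can be negative), so the one-step bound cannot be naively iterated. I would handle this with a telescoping expansion of $\prod_{i=j+1}^{m-1} B_i - \prod_{i=j+1}^{m-1} \underline A_i$ applied to $z_m$: each summand reduces to $B_{j+1}\cdots B_{k-1}(B_k - \underline A_k)(\underline A_{k+1}\cdots \underline A_{m-1} z_m)$, and by B3' the tail $\underline A_{k+1}\cdots \underline A_{m-1} z_m$ is strictly positive, so the rank-one structure from step~2 converts the summand into a nonneg multiple of $B_{j+1}\cdots B_{k-1} z_k$. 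Induction on the product length reduces this to a strictly shorter instance of the same chaining question, which is closed at the base by B3'. Once $\Delta S_j > 0$ is established for $0 \leq j \leq m-1$, the conclusion $\hat s_0 < s_0$ is immediate from the reformulation $S_0 := -s_0$, $\hat S_0 := -\hat s_0$ made just before the lemma.
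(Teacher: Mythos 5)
Your proposal is correct and follows essentially the same route as the paper's proof: the exact linear recursion $\Delta S_j = B_{j+1}\Delta S_{j+1}$ with $B_{j+1} = I - v_{j+1}^{-1}z_{j+1}(S_{j+1}+\hat S_{j+1})^T$ is precisely the paper's $\tilde A_{j+1}$ (the mean value theorem applied to a quadratic lands on the midpoint, so the two derivations coincide), your rank-one comparison $B_k - \underline A_k = z_k\rho_k^T$ with $\rho_k \geq 0$ is the paper's Lemma on $\tilde A_t - \underline A_t = 2z_t\delta_t^T$, and your telescoping expansion of the product difference followed by induction on the product length is exactly the paper's two-parameter family $w(t;\tau)$, $\underline w(t;\tau)$ together with its Lemmas establishing $w(t;\tau)-\underline w(t;\tau) = \sum_{t'} a(t';\tau)\,w(t;t'-1)$ and $w \geq \underline w > 0$ by induction on $\tau - t$. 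No gaps; the argument is sound as outlined.
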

We now prove the second inequality together with Lemma \ref{lemma:dv}
assuming Lemma \ref{lemma:dS} holds.
\vspace{0.2in}

\begin{proof}[Proof of Lemma \ref{lemma:dv}]
1) 
If $m=1$ then, by construction, $\hat s_0 = s_0 - z_1 \epsilon_1 < s_0$ since $z_1>0$.
If $m>1$ then $\hat s_0 < s_0$ by Lemma \ref{lemma:dS}.  
Since $\hat s = s$ and $\hat s_0 < s_0$ we have
\bqn
C(\hat x) - C(x) & = & \sum_{j=0}^n \left(  C_j\left( \text{Re }\hat s_j \right) - C_j\left( \text{Re }s_j \right) \right)
\ \, = \ \, C_0\left( \text{Re }\hat s_0 \right) - C_0\left( \text{Re }s_0 \right) 
\ \  < \ \  0
\eqn
as desired, since $C_0$ is strictly increasing.

2) To avoid circular argument we will first prove using Lemma \ref{lemma:dS}
\bq
\hat v_j & \geq & v_j, \qquad\quad j\in N
\label{eq:dv>0}
\eq
We will then use this and Lemma \ref{lemma:dS} to prove $\hat v_j \hat \ell_j \geq |\hat S_j|^2$
for all $j\in N$.  This means that $\hat x$ satisfies 
\eqref{eq:lndf.1}, \eqref{eq:lndf.2}, and \eqref{eq:lndf.socp}.   We can then use 
\cite[Lemma 13]{Low2014a} and assumption B2 to prove 
$\underline{v_j} \ \leq \ \hat v_j \ \leq  \ \overline v_j$, $j\in N$.

To prove \eqref{eq:dv>0}, note that 
both $\hat v$ and $v$ satisfy \eqref{eq:lndf.2} and hence we have, for $j = 1, \dots, n$,
\bq
\Delta v_{j-1} & = & \Delta v_j - 2\, \text{Re}\left( z_j^H \Delta S_j \right) + |z_j|^2 \Delta \ell_j
\qquad
\label{eq:dv}
\eq
where $\Delta \ell_j := \hat \ell_j - \ell_j$.  
From \eqref{eq:lndf.1} we have 
\bqn
z_j \Delta \ell_j & = & \Delta S_j - \Delta S_{j-1} + \Delta s_{j-1}
\eqn
where $\Delta s_0 := \hat s_0 - s_0 < 0$ and $s_{j-1} = 0$ for $j > 1$.
Multiplying both sides by $z_j^H$ and noticing that both sides must be real, we conclude
\bqn
|z_j|^2 \Delta \ell_j & = & \text{Re } \left( z_j^H \Delta S_j - z_j^H \Delta S_{j-1} + z_j^H \Delta s_{j-1} \right)
\eqn
Substituting into \eqref{eq:dv} we have for $j = 1, \dots, n$
\bqn
\Delta v_j - \Delta v_{j-1} & \!\!\! = \!\!\! & \text{Re } z_j^H \Delta S_j
			\ + \ \text{Re }  z_j^H \Delta S_{j-1}  
			 \ - \ \text{Re } z_j^H \Delta s_{j-1}
\eqn
But Lemma \ref{lemma:dS} implies that $\text{Re } z_j^H \Delta S_j = r_j\, \Delta P_j + x_j\, \Delta Q_j \geq 0$.
Similarly every term on the right-hand side is nonnegative and hence 
\bqn
\Delta v_j &  \geq & \Delta v_{j-1} \qquad \text{ for } j = 1, \dots, n
\eqn
implying that $\Delta v_j \geq \Delta v_0 = 0$, proving \eqref{eq:dv>0}.

We now use \eqref{eq:dv>0} to prove the second assertion of the lemma.
By construction, for $j=m+1, \dots, n$,
\bqn
\hat \ell_j & = & \ell_j  \ \geq \  \frac{|S_j|^2}{v_j} \ \geq \ \frac{|\hat S_j|^2}{\hat v_j}
\eqn
as desired, since $\hat S_j = S_j$ and $\hat v_j \geq v_j$.   
Similarly  \eqref{eq:lndf.socp} holds for $\hat x$ for $j = m$ because of the
choice of $\epsilon_m$.   For $j = 1, \dots, m-1$, $\hat v_j \geq v_j$ again implies
\bqn
\hat \ell_j & = & \frac{|\hat S_j|^2}{v_j} \ \geq\  \frac{|\hat S_j|^2}{\hat v_j}
\eqn

3) The relation \eqref{eq:dv>0} means 
\bqn
\hat v_j  & \geq & v_j \ \geq\  \underline v_j, \qquad j\in N
\eqn
Assumption B2 and \cite[Lemma 13]{Low2014a} (see also Remark 6 of \cite{Low2014a})
imply that 
\bqn
\hat v_j & \leq & v_j^{\text{lin}}(s) \  \leq  \ \overline v_j, \qquad j\in N
\eqn
This proves $\hat x$ satisfies  \eqref{eq:boxv} and completes the proof of Lemma \ref{lemma:dv}.
\end{proof}
\vspace{0.2in}


The remainder  of this subsection is devoted to proving the key result Lemma
\ref{lemma:dS}.

\begin{proof}[Proof of Lemma \ref{lemma:dS}]
By construction $\Delta S_j = 0$ for $j=m, \dots, n$.   
To prove $\Delta S_j > 0$ for $j = 0, \dots, m-1$, the key idea is to derive
a recursion on $\Delta S_j$ in terms of the Jacobian matrix $A_j(S_j, v_j)$.
The intuition is that, when the branch current $\ell_m$ is reduced by $\epsilon_m$ to
$\hat \ell_m$, loss on line $m$ is reduced and all upstream branch powers $S_j$ will be 
increased to $\hat S_j$ as a consequence.

This is proved in three steps, of which we now give an informal overview.
 First we derive a recursion \eqref{eq:DS} on $\Delta S_j$.
This motivates a collection of linear dynamical systems $w$ in \eqref{eq:ds1} 
that contains the process $(\Delta S_j$, $j=0, \dots, m-1)$ as a specific trajectory.
Second we construct another collection of linear dynamical systems $\underline w$
in \eqref{eq:ds2} such that assumption B3' implies $\underline w>0$.
Finally we  prove an expression for the process $w - \underline w$ that
shows $w \geq \underline w$ (in Lemmas \ref{lemma:p1p2}, \ref{lemma:dw}, \ref{lemma:dw+}).  
This then implies $\Delta S = w \geq \underline w > 0$.
We now make these steps precise.

Since both $x$ and $\hat x$ satisfy  \eqref{eq:lndf.1} and $\hat s_j = s_j$ 
for all $j\in N$ we have (with the redefined $\Delta S_0 := -(\hat s_0 - s_0)$)
\bq
\Delta {S}_{j-1} & = &  \Delta {S}_{j} - z_{j} \Delta \ell_{j},  \quad j = 1, 2, \dots, n  \qquad
\label{eq:pf.4}
\eq
where $\Delta \ell_j := \hat \ell_j - \ell_j$.  
For $j=1, \dots, m-1$ both $x$ and $\hat x$ satisfy
\eqref{eq:lndf.3}.   For these $j$, fix any $v_j \geq \underline v_j$ and consider $\ell_j := \ell_j(S_j)$ as functions of the real pair $S_j := (P_j, Q_j)$:
\bqn
\ell_j(S_j) & := & \frac{P_j^2 + Q_j^2}{v_j}, \quad j = 1, \dots, m-1
\eqn
whose Jacobian are the row vectors:
\bqn
\frac{\partial \ell_j}{\partial S_j}(S_j) & = & \frac{2}{v_j} [P_j \ \ Q_j] \ = \ \frac{2}{v_j} S_j^T
\eqn
The mean value theorem implies for $j=1, \dots, m-1$
\bqn
\Delta \ell_j & = & \ell_j(\hat S_j) - \ell_j(S_j) \ = \ \frac{\partial \ell_j}{\partial S_j}(\tilde{S}_j) \, \Delta S_j
\eqn
where $\tilde{S}_j := \alpha_j S_j + (1-\alpha_j) \hat S_j$ for some $\alpha_j \in [0, 1]$.
Substituting it into \eqref{eq:pf.4} we obtain the recursion, for $j=1, \dots, m-1$,
\begin{subequations}
\bq
\Delta {S}_{j-1} & = & \tilde A_j \, \Delta S_j
\label{eq:DS1}
\\
\Delta S_{m-1} & = & \epsilon_m \, z_m \ > \ 0
\label{eq:DS2}
\eq
\label{eq:DS}
\end{subequations}
where the $2\times 2$ matrix $\tilde A_j$ is the matrix function $A_j(S_j, v_j)$ 
defined in \eqref{eq:defA} evaluated at $(\tilde S_j, v_j)$:
\bq
\tilde A_j \ := \ A_j (\tilde S_j, v_j) & := &  I - \frac{2}{v_j} z_j \tilde{S}_j^T
\label{eq:defAtilde}
\eq
which depends on $(S_j, \hat S_j)$ through $\tilde{S}_j$.

Note that $\tilde A_j$ and $\Delta S_j$ are {not} independent since both are defined
in terms of $(S_j, \hat S_j)$, and therefore strictly speaking
\eqref{eq:DS} does not specify a \emph{linear} system.  Given an optimal solution $x$ 
of the relaxation OPF-socp and our modified solution $\hat x$, however, the sequence of matrices
$\tilde A_j$, $j=1, \dots, m-1$, are fixed.   We can therefore consider the following collection
of discrete-time linear time-varying systems (one for each $\tau$),
whose state at time $t$ (going backward in time) is $w(t; \tau)$,
when it starts at time $\tau \geq t$ in the initial state $z_{\tau+1}$: 
for each $\tau$ with $0 < \tau < m$,
\begin{subequations}
\bq
w(t-1; \tau)  &\!\!\! = \!\!\! & \tilde A_t\, w(t; \tau), \ \   t = \tau, \tau-1, \dots, 1 \qquad\ \
\\
w(\tau; \tau) &\!\!\! = \!\!\! & z_{\tau+1}
\eq
\label{eq:ds1}
\end{subequations}
Clearly $\Delta S_j = \epsilon_m\, w(j; m-1)$.  Hence, to prove 
$\Delta S_j > 0$, it suffices to prove $w(j; m-1)>0$ for all $j$ with $0 \leq j \leq m-1$.

To this end we compare the system $w(t; \tau)$ with the following collection of linear 
time-variant systems:  for each $\tau$ with $0 < \tau < m$,
\begin{subequations}
\bq
\underline w(t-1; \tau)  &\!\!\! = \!\!\! & \underline A_t\, \underline w(t; \tau), \ \   t = \tau, \tau-1, \dots, 1 \qquad\ \ 
\\
\underline w(\tau; \tau) &\!\!\! = \!\!\! & z_{\tau+1}
\eq
\label{eq:ds2}
\end{subequations}
where $\underline A_t$ is defined in \eqref{eq:defAunder} and reproduced here:
\bq
\underline A_{t} & := &  A_{t} \left( \left[ S_{t}^\text{lin}(\overline s) \right]^+\!\!, \  \underline v_t \right) 
\ \ = \ \ I - \frac{2}{\underline v_t}\, z_{t} 
			\left( \left[ S^{\text{lin}}_{t}(\overline s) \right]^+ \right)^T
\label{eq:defAunder.2}
\eq
Note that $\underline A_t$ are  \emph{independent of} the OPF-socp solution
$x$ and our modified solution $\hat x$.
Then assumption B3' is equivalent to
\bq
\underline w(t; \tau) & > & 0 \quad \text{ for all } 0 \leq t \leq \tau < m
\label{eq:pf.3}
\eq

We now prove, in  Lemmas \ref{lemma:p1p2}, \ref{lemma:dw}, \ref{lemma:dw+}, 
that $w(t; \tau) \geq \underline w(t; \tau)$ and hence B3' implies 
$\Delta S_j = \epsilon_m\, w(j; m-1) \geq \epsilon_m\, \underline w(j; m-1) > 0$,
establishing Lemma \ref{lemma:dS}.

\vspace{0.1in}
\begin{lemma}
\label{lemma:p1p2}
For each $t = m-1, \dots, 1$
\bqn
\tilde A_t - \underline A_t & = &  2 \ z_t \, \delta_t^T
\eqn
for some 2-dimensional vector  $\delta_t \geq 0$.
\end{lemma}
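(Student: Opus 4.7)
The plan is to compute $\tilde A_t - \underline A_t$ directly from the definitions \eqref{eq:defAtilde} and \eqref{eq:defAunder.2}, which gives
\[
\tilde A_t - \underline A_t \ = \ \frac{2}{\underline v_t}\, z_t \left( [S_t^{\text{lin}}(\overline s)]^+\right)^T - \frac{2}{v_t}\, z_t\, \tilde S_t^T \ = \ 2\, z_t\, \delta_t^T,
\]
where
\[
\delta_t \ := \ \frac{1}{\underline v_t}\, [S_t^{\text{lin}}(\overline s)]^+ \ - \ \frac{1}{v_t}\, \tilde S_t.
\]
So the whole content of the lemma reduces to showing that this particular $\delta_t$ is componentwise nonnegative.

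To prove $\delta_t \geq 0$, I would argue componentwise on the real pair $\delta_t = [\delta_t^P,\ \delta_t^Q]^T$. Consider one component. If the corresponding entry of $\tilde S_t$ is nonpositive then the second term is already nonnegative while the first is $\geq 0$ by construction, so the inequality is trivial. The nontrivial case is when the corresponding entry of $\tilde S_t$ is strictly positive; since $v_t \geq \underline v_t > 0$ implies $1/v_t \leq 1/\underline v_t$, it suffices to establish the componentwise bound $\tilde S_t \leq [S_t^{\text{lin}}(\overline s)]^+$ on those components where $\tilde S_t$ is positive, i.e., essentially $\tilde S_t \leq S_t^{\text{lin}}(\overline s)$ on that coordinate. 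Since $\tilde S_t = \alpha_t S_t + (1-\alpha_t)\hat S_t$ with $\alpha_t \in [0,1]$, this is implied by the two bounds $S_t \leq S_t^{\text{lin}}(\overline s)$ and $\hat S_t \leq S_t^{\text{lin}}(\overline s)$.

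The first bound is immediate: \eqref{eq:linbounds} gives $S_t \leq S_t^{\text{lin}}(s)$, and the linear form $S_t^{\text{lin}}(\cdot)$ is monotone in the injections (it is just a sum of $s_i$ over the downstream subtree), so $S_t^{\text{lin}}(s) \leq S_t^{\text{lin}}(\overline s)$. The second bound is the real work: I would unwind the recursive construction of $\hat x$ from line $m$ down to line $t$. Since $\hat \ell_j = |\hat S_j|^2/v_j \geq 0$ and $\hat s_{j-1} = s_{j-1} \leq \overline s_{j-1}$ for $j-1 \in N$, the recursion \eqref{eq:lndf.1} gives
\[
\hat S_{j-1} \ = \ \hat S_j - z_j \hat \ell_j + \hat s_{j-1} \ \leq \ \hat S_j + \overline s_{j-1}
\]
(using $z_j > 0$). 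Iterating this from $j = m$ down to $j = t+1$ and using the initial condition $\hat S_m = S_m \leq S_m^{\text{lin}}(s) \leq S_m^{\text{lin}}(\overline s)$, one obtains
\[
\hat S_t \ \leq \ S_m^{\text{lin}}(\overline s) + \sum_{i=t}^{m-1} \overline s_i \ = \ \sum_{i=t}^n \overline s_i \ = \ S_t^{\text{lin}}(\overline s),
\]
as required.

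The main obstacle is this last chain: one has to be careful that the loss terms $z_j \hat \ell_j$ can be dropped in the right direction (which works precisely because $z_j > 0$ and $\hat \ell_j \geq 0$), and that the boundary value $\hat S_m = S_m$ telescopes correctly with the partial sum of $\overline s_i$ down to node $t$ to reproduce the full linear bound $S_t^{\text{lin}}(\overline s)$. Everything else is bookkeeping: the factorization of $\tilde A_t - \underline A_t$ as $2 z_t \delta_t^T$ is a one-line algebraic manipulation, and the sign analysis of $\delta_t$ is the two-case argument sketched above.
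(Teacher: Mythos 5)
Your proof is correct and follows essentially the same route as the paper: factor $\tilde A_t - \underline A_t = 2 z_t \delta_t^T$ from the definitions, reduce $\delta_t \geq 0$ to the bound $\tilde S_t \leq \left[ S_t^\text{lin}(\overline s)\right]^+$ via the convex combination $\tilde S_t = \alpha_t S_t + (1-\alpha_t)\hat S_t$ together with $v_t \geq \underline v_t$, and obtain $\hat S_t \leq S_t^\text{lin}(\overline s)$ from the recursive construction of $\hat x$. The only difference is that where the paper cites \cite[Lemma 13(2)]{Low2014a} for the bound on $\hat S_t$, you explicitly telescope the recursion $\hat S_{j-1} \leq \hat S_j + \overline s_{j-1}$ (dropping the nonnegative loss terms $z_j \hat\ell_j$), which is precisely the content of that cited lemma.
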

\begin{proof}[Proof of Lemma \ref{lemma:p1p2}]
Fix any $t = m-1, \dots, 1$.  
We have $S_t \leq S_t^\text{lin}(s)$ from \eqref{eq:constraints}.
Even though we have not yet proved $\hat S_t$ is feasible for OPF-socp
we know $\hat S_t$ satisfies \eqref{eq:lndf.1} by construction of $\hat x$.
The same argument as in \cite[Lemma 13(2)]{Low2014a} then shows  
$\hat S_t \leq S_t^\text{lin}(s)$.  Hence $\tilde S_t := \alpha_t S_t + (1 - \alpha_t) \hat S_t$,
$\alpha_t \in [0, 1]$, satisfies $\tilde S_t \leq S_t^\text{lin}(s)$.  Hence
\bq
\tilde S_t \ \leq \ S_t^\text{lin}(s) \ \leq \ S_t^\text{lin}(\overline s) \ \leq \ 
		\left[ S_t^\text{lin}(\overline s) \right]^+
\label{eq:tildeSt}
\eq
Using the definitions of $\tilde A_t$ in \eqref{eq:defAtilde} and 
$\underline A_t$ in \eqref{eq:defAunder.2} we have
$\tilde A_t  - \underline A_t = 2\, z_t \delta_t^T$ where 
\bqn
\delta_t^T & := & \left[ \frac{\left[ P_t^\text{lin}(\overline s) \right]^+}{\underline v_t} - \frac{\tilde P_t}{v_t}  \ \ \ \
				 \frac{\left[ Q_t^\text{lin}(\overline s) \right]^+}{\underline v_t} - \frac{\tilde Q_t}{v_t}  \right]
\eqn
Then \eqref{eq:tildeSt} and $v_t \geq \underline v_t$ impy that $\delta_t \geq 0$.
\end{proof}
\vspace{0.1in}

For each $\tau$ with $0<\tau < m$ define the scalars $a(t; \tau)$ in terms of the 
solution $\underline w(t; \tau)$ of \eqref{eq:ds2} and $\delta_t$ in Lemma \ref{lemma:p1p2}:
\bq
a(t; \tau)  & := & 2\, \delta_t^T \underline w(t; \tau) \ \ > \ \  0
\label{eq:defa}
\eq
\begin{lemma}
\label{lemma:dw}
Fix any $\tau$ with $0< \tau <m$.  For each $t = \tau, \tau-1, \dots, 0$ we have
	\bqn
	w(t; \tau) - \underline w(t; \tau) & = & \sum_{t'=t+1}^\tau a(t'; \tau) \, w(t; t'-1)
	\eqn
\end{lemma}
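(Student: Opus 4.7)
The plan is to prove Lemma \ref{lemma:dw} by backward induction on $t$, treating the identity as a discrete-time variation-of-constants formula: the process $w$ differs from $\underline w$ because of the rank-one perturbation $\tilde A_t - \underline A_t = 2 z_t \delta_t^T$ established in Lemma \ref{lemma:p1p2}, and each such perturbation injects, at time $t'$, the initial condition $z_{t'}$ scaled by the scalar $a(t';\tau)$, which is then propagated by $\tilde A$ exactly as the system $w(\cdot;t'-1)$ would propagate it.

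The base case is $t = \tau$, where both sides vanish: $w(\tau;\tau) - \underline w(\tau;\tau) = z_{\tau+1} - z_{\tau+1} = 0$, and the sum on the right is empty. For the inductive step, assuming the formula holds at $t$, I would write
\begin{align*}
w(t-1;\tau) - \underline w(t-1;\tau)
 & =  \tilde A_t\, w(t;\tau) - \underline A_t\, \underline w(t;\tau) \\
 & =  \tilde A_t\bigl(w(t;\tau) - \underline w(t;\tau)\bigr) + (\tilde A_t - \underline A_t)\,\underline w(t;\tau) \\
 & =  \tilde A_t\bigl(w(t;\tau) - \underline w(t;\tau)\bigr) + 2 z_t\, \delta_t^T \underline w(t;\tau) \\
 & =  \tilde A_t\bigl(w(t;\tau) - \underline w(t;\tau)\bigr) + a(t;\tau)\, z_t,
\end{align*}
using Lemma \ref{lemma:p1p2} in the third line and the definition \eqref{eq:defa} of $a(t;\tau)$ in the fourth (noting that $\delta_t^T \underline w(t;\tau)$ is a scalar, so the $z_t$ factors out).

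Now I substitute the inductive hypothesis for $w(t;\tau) - \underline w(t;\tau)$ and use the key observation that for every $t' \geq t+1$ the system \eqref{eq:ds1} gives $\tilde A_t\, w(t; t'-1) = w(t-1; t'-1)$; also $z_t = w(t-1;t-1)$ from the initial condition in \eqref{eq:ds1} applied at $\tau' = t-1$. Thus
\begin{align*}
w(t-1;\tau) - \underline w(t-1;\tau)
 & =  \sum_{t'=t+1}^{\tau} a(t';\tau)\, \tilde A_t\, w(t; t'-1) + a(t;\tau)\, z_t \\
 & =  \sum_{t'=t+1}^{\tau} a(t';\tau)\, w(t-1; t'-1) + a(t;\tau)\, w(t-1;t-1) \\
 & =  \sum_{t'=t}^{\tau} a(t';\tau)\, w(t-1; t'-1),
\end{align*}
which is the formula at $t-1$, completing the induction.

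No step is truly hard here; the only thing to be careful about is index bookkeeping, in particular that the perturbation generated at time $t$ has initial value $z_t$ (not $z_{t+1}$) because it enters the recursion for $w(t-1;\cdot)$, and must therefore be identified with $w(t-1; t-1)$ from the system indexed by $\tau' = t-1$. Once the identity $\tilde A_t = \underline A_t + 2 z_t \delta_t^T$ is used to split the one-step error into a ``propagation of past errors'' plus a ``new injection,'' the Duhamel-type expansion falls out immediately.
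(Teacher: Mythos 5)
Your proof is correct and follows essentially the same route as the paper's: backward induction on $t$ with the decomposition $\tilde A_t w(t;\tau) - \underline A_t \underline w(t;\tau) = \tilde A_t\bigl(w(t;\tau)-\underline w(t;\tau)\bigr) + (\tilde A_t - \underline A_t)\underline w(t;\tau)$, Lemma \ref{lemma:p1p2} together with \eqref{eq:defa} to identify the injected term $a(t;\tau) z_t$, and the identification $z_t = w(t-1;t-1)$ to absorb it into the sum. The index bookkeeping (in particular $\tilde A_t\, w(t;t'-1) = w(t-1;t'-1)$ for $t' \geq t+1$) is handled correctly.
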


\begin{proof}[Proof of Lemma \ref{lemma:dw}]
Fix a $\tau$ with $0< \tau <m$.  We now prove the lemma by induction on $t = \tau, \tau-1, \dots, 0$.
The assertion holds for $t = \tau$ since $w(\tau; \tau) - \underline w(\tau; \tau) = 0$.  Suppose it holds
for $t$.   Then for $t-1$ we have from \eqref{eq:ds1} and \eqref{eq:ds2}
\bqn
w(t-1; \tau) - \underline w(t-1; \tau)
& = & \tilde A_t\, w(t; \tau) - \underline A_t\, \underline w(t; \tau)
\\
& = & 
\left( \tilde A_t - \underline A_t \right) \, \underline w(t; \tau)  \ + \  \tilde A_t \left( w(t; \tau) - \underline w(t; \tau) \right)
\\
& = &
a(t; \tau)\, z_t \ + \ \sum_{t'=t+1}^\tau a(t'; \tau) \ \tilde A_t\, w(t; t'-1)
\\
& = &
a(t; \tau)\, z_t \ + \ \sum_{t'=t+1}^\tau a(t'; \tau) \, w(t-1; t'-1)
\\
&  = &
\sum_{t'=t}^\tau a(t'; \tau) \, w(t-1; t'-1)
\eqn
where the first term on the right-hand side of the third equality follows from Lemma \ref{lemma:p1p2} 
and the definition of $a(t; \tau)$ in \eqref{eq:defa}, and the second term from the induction hypothesis.
The last two equalities follow from \eqref{eq:ds1}.
\end{proof}
\vspace{0.1in}

\begin{lemma}
\label{lemma:dw+}
Suppose B3' holds.   Then for each $\tau$ with $0< \tau <m$ and
each $t = \tau, \tau-1, \dots, 0$,
\bq
w(t; \tau) & \geq & \underline w(t; \tau) \ > \ 0
\label{eq:dw+}
\eq
\end{lemma}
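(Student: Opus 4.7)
The plan is to derive Lemma \ref{lemma:dw+} as a direct consequence of the identity established in Lemma \ref{lemma:dw} together with the sign information collected in the preceding lemmas, via a strong induction on $\tau$. The first observation is that the inequality $\underline w(t;\tau) > 0$ is nothing but assumption B3' rewritten: indeed, unrolling the recursion \eqref{eq:ds2} gives $\underline w(t;\tau) = \underline A_{t+1}\cdots \underline A_{\tau}\, z_{\tau+1}$, and B3' asserts precisely that every such product is strictly positive (this is exactly \eqref{eq:pf.3}). Hence it suffices to prove the inequality $w(t;\tau) \geq \underline w(t;\tau)$ componentwise.

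For this, I would use Lemma \ref{lemma:dw}, which writes
\begin{equation*}
w(t;\tau) - \underline w(t;\tau) \ = \ \sum_{t'=t+1}^{\tau} a(t';\tau)\, w(t; t'-1).
\end{equation*}
Two things are already in hand about the right-hand side: the scalar weights $a(t';\tau) = 2\delta_{t'}^T \underline w(t';\tau)$ are nonnegative, since $\delta_{t'} \geq 0$ by Lemma \ref{lemma:p1p2} and $\underline w(t';\tau) > 0$ by B3'; and the second argument $t'-1$ of each $w(t; t'-1)$ on the right is strictly smaller than $\tau$. Thus the identity expresses $w(t;\tau)$ as $\underline w(t;\tau)$ plus a nonnegative combination of states $w(\cdot; s)$ with $s < \tau$, which is exactly the setup for strong induction on $\tau$.

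Concretely, I would fix $\tau \in \{1,\dots,m-1\}$ and assume that $w(t; s) \geq \underline w(t; s) > 0$ for all $0 \leq t \leq s$ and all $s < \tau$; I would also use the initial condition $w(0;0) = z_1 > 0$ as a base case (needed when $t=0$ and the sum includes the term $w(0;0)$). The case $t=\tau$ is immediate because $w(\tau;\tau) = z_{\tau+1} = \underline w(\tau;\tau) > 0$. For $t<\tau$, each summand $a(t';\tau)\, w(t;t'-1)$ on the right-hand side of the identity above is nonnegative by the inductive hypothesis applied to $s = t'-1 \leq \tau-1$ (and by the $z_1>0$ base case when $t'=1$), so $w(t;\tau) \geq \underline w(t;\tau)$, and combining with $\underline w(t;\tau) > 0$ yields \eqref{eq:dw+}. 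No step is really an obstacle here; the main care is bookkeeping: making sure the induction on $\tau$ and the secondary index $t$ are organized so that every $w(t;t'-1)$ appearing in Lemma \ref{lemma:dw} has already been shown nonnegative, and that the vacuous/initial cases ($t=\tau$ and $t'-1=0$) are handled directly from the definitions in \eqref{eq:ds1}.
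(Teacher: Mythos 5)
Your proposal is correct and follows essentially the same route as the paper: both reduce the claim to the identity of Lemma \ref{lemma:dw}, use $\delta_{t'}\geq 0$ and the equivalence of B3' with $\underline w(t;\tau)>0$ to make every summand nonnegative, and close by induction (the paper inducts on the lag $\tau-t$ uniformly over $\tau$, while you do strong induction on $\tau$ — an immaterial reordering, since every term $w(t;t'-1)$ in the sum has both smaller lag and smaller second index). Your handling of the edge cases $t=\tau$ and $w(0;0)=z_1$ matches what the paper uses implicitly.
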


\begin{proof}[Proof of Lemma \ref{lemma:dw+}]
We prove the lemma by induction on $(t, \tau)$.   
\bee
\item \emph{Base case:} For each $\tau$ with $0<\tau<m$, \eqref{eq:dw+} holds for $t = \tau$,
	i.e., for $t$ such that $\tau - t = 0$.
\item \emph{Induction hypothesis:} For each $\tau$ with $0<\tau<m$, suppose \eqref{eq:dw+} holds 
	for $t \leq \tau$ such that $0 \leq \tau - t \leq k-1$.
\item \emph{Induction:} We will prove that, for each $\tau$ with $0<\tau<m$, \eqref{eq:dw+} holds 
	for $t \leq \tau$ such that $0 \leq \tau - t \leq k$.    For $t = \tau - k$ we have from Lemma
	\ref{lemma:dw}
	\bqn
	w(t; \tau) - \underline w(t; \tau) & = & \sum_{t'=t+1}^\tau a(t'; \tau) \, w(t; t'-1)
	\eqn
	But each $w(t; t'-1)$ in the summands satisfies $w(t; t'-1) \geq \underline w(t; t'-1)$ by the
	induction hypothesis.  Hence, since $a(t'; \tau)>0$, 
	\bqn
	w(t; \tau) - \underline w(t; \tau) & \geq & \sum_{t'=t+1}^\tau a(t'; \tau) \, \underline w(t; t'-1)
	\ \ > \ \ 0
	\eqn
	where the last inequality follows from \eqref{eq:pf.3} and \eqref{eq:defa}.
\eee
This completes our induction proof.
\end{proof}
\vspace{0.2in}

Lemma \ref{lemma:dw+} implies, for $j=0, \dots, m-1$, 
$\Delta S_j = \epsilon_m\, w(j; m-1) > 0$.  
This completes the proof of Lemma \ref{lemma:dS}.
\end{proof}

This completes the proof of Theorem \ref{thm:bfmsocp.2} for the linear network.
For a general tree network the proof is almost identical, except with more cumbersome notations, 
by focusing on a path from the root to a first link over which \eqref{eq:gdf.3} holds with strict inequality; see
\cite{Gan-2014-BFMt-TAC}.
\end{proof}

\subsection{Proof of Theorem \ref{thm:uniqueBFM}: uniqueness of SOCP solution}
The proof is from \cite{Gan-2014-BFMt-TAC}.
\begin{proof}
Suppose $\hat x$ and $\tilde x$ are distinct optimal solutions of the relaxation OPF-socp
\eqref{eq:SOCP-radialBFM}.
Since the feasible set of OPF-socp is convex the point $x := (\hat x + \tilde x)/2$ is also
feasible for OPF-socp.   Since the cost function $C$ is convex and both $\hat x$ and $\tilde x$ 
are optimal for OPF-socp \eqref{eq:SOCP-radialBFM}, $x$ is also optimal for 
\eqref{eq:SOCP-radialBFM}.  The exactness of OPF-socp \eqref{eq:SOCP-radialBFM}
then implies that $x$ attains equality in 
\eqref{eq:gdf.3}.   We now show that $\hat x = \tilde x$.
 
Since $v_j \ell_{jk} = |S_{jk}|^2$ we have for $j\neq 0$
\bqn
\frac{1}{4} ( \hat v_j + \tilde v_j ) (\hat \ell_{jk} + \tilde \ell_{jk})
	& = & \frac{1}{4} \left| \hat S_{jk} + \tilde S_{jk} \right|^2
\eqn
Substituting $\hat v_j \hat \ell_{jk} = |\hat S_{jk}|^2$ and $\tilde v_j \tilde \ell_{jk} = |\tilde S_{jk}|^2$
yeilds
\bq
\hat v_j \tilde \ell_{jk} + \tilde v_j \hat \ell_{jk} 
& = & 2\ \text{Re} \left( \hat S_{jk}^H \tilde S_{jk} \right)
\label{eq:vell.1}
\eq
The right-hand side satisfies
\bq
2\ \text{Re} \left( \hat S_{jk}^H \tilde S_{jk} \right) & \leq & 2 \,  | \tilde S_{jk}| | \hat S_{jk}|
\label{eq:vell.2}
\eq
with equality if and only if $\angle \hat S_{jk} = \angle \tilde S_{jk}$ (mod $2\pi$).
The left-hand side of \eqref{eq:vell.1} is
\bq
\hat v_j \tilde \ell_{jk} + \tilde v_j \hat \ell_{jk}  & = & 
\hat v_j \frac{| \tilde S_{jk}|^2}{\tilde v_j} \, + \, 
\tilde v_j \frac{| \hat S_{jk}|^2}{\hat v_j}
\ \ = \ \ \frac{1}{\eta_j}\left( \eta_j^2 | \tilde S_{jk}|^2 + | \hat S_{jk}|^2 \right)
\ \ \geq \ \    2\,  | \tilde S_{jk}| | \hat S_{jk}|
\label{eq:vell.3}
\eq
with equality if and only if $\eta_j |\tilde S_{jk}| = |\hat S_{jk}|$,
where for $j=1, \dots, n$
\bqn
\eta_j & := & \frac{\hat v_j}{\tilde v_j}
\eqn
Combining \eqref{eq:vell.1}--\eqref{eq:vell.3} implies that equalities are attained
in both \eqref{eq:vell.2} and \eqref{eq:vell.3}.   Hence
\bq
\eta_j \tilde S_{jk} \ = \ \hat S_{jk}
& \text{and} & 
\eta_j \tilde \ell_{jk} \ = \ \hat \ell_{jk}
\label{eq:vell.4}
\eq

Define $\eta_0 := \hat v_0 / \tilde v_0 = 1$.   Then for each line 
$j\rightarrow k \in \tilde E$ we have, using \eqref{eq:gdf.2},
\bqn
\eta_k & = & \frac{\hat v_k}{\tilde v_k}
	\ = \ \frac{\hat v_j - 2\, \text{Re} (z_{jk}^H \hat S_{jk}) + |z_{jk}|^2 \hat \ell_{jk}}
		   {\tilde v_j - 2\, \text{Re} (z_{jk}^H \tilde S_{jk}) + |z_{jk}|^2 \tilde \ell_{jk}}
\\
& = & \frac{\eta_j \left(\tilde v_j - 2\, \text{Re} (z_{jk}^H \tilde S_{jk}) + |z_{jk}|^2 \tilde \ell_{jk} \right)}
		   {\tilde v_j - 2\, \text{Re} (z_{jk}^H \tilde S_{jk}) + |z_{jk}|^2 \tilde \ell_{jk}}
\ = \ \eta_j
\eqn
where the last equality follows from \eqref{eq:vell.4}.
This implies, since the network graph $\tilde G$ is connected, 
that $\eta_j = \eta_0 = 1$ for all $j\in N^+$, i.e.
$\hat v_j = \tilde v_j$, $j\in N^+$.

We have thus shown that $\hat S = \tilde S$, $\hat \ell = \tilde \ell$, $\hat v = \tilde v$, 
and hence, by \eqref{eq:gdf.1}, $\hat s = \tilde s$, i.e., $\hat x = \tilde x$.
This completes the proof.
\end{proof}

\subsection{Proof of Corollary \ref{coro:nonconvex}: hollow feasible set}

\begin{proof}
To prove Corollary \ref{coro:nonconvex}, note that
the optimality of $x$ is only used to ensure that $x$ attains equalities
in \eqref{eq:gdf.3}.   The equalities \eqref{eq:vell.1} hold for
\emph{any} convex combination $x$ of $\hat x$ and $\tilde x$.
Hence the proof of Theorem \ref{thm:uniqueBFM} shows that if $\hat x$ and $\tilde x$ are
distinct solutions of the branch flow model in $\mathbb X$
then no convex combination of $\hat x$ and $\tilde x$ can be in $\mathbb X$,
implying in particular that $\mathbb X$ is nonconvex.
\end{proof}

\subsection{Proof of Theorem \ref{thm:pfOPF}: angle difference}

The proof follows that in \cite{LavaeiTseZhang2012}.
We first prove the case of two buses and then extend it to a tree network.

\subsubsection*{Case 1: two-bus network}

Consider two buses $j$ and $k$ connected by a line with 
admittance $y_{jk} = g_{jk} - \ii b_{jk}$ with $g_{jk}>0, b_{jk}>0$.
Since $p_j = P_{jk}$ and $p_k = P_{kj}$ we will work with $P := (P_{jk}, P_{kj})$.
Now
\begin{subequations}
\bq
\!\!\!\!\!
P_{jk} \ := \ P_{jk}(\theta_{jk}) &\!\!\! := \!\!\! & g_{jk} - g_{jk} \cos\theta_{jk} + b_{jk}\sin\theta_{jk}
\\
\!\!\!\!\!
P_{kj} \ := \ P_{kj}(\theta_{jk}) &\!\!\! := \!\!\! & g_{jk} - g_{jk} \cos\theta_{jk} - b_{jk}\sin\theta_{jk}
\eq
\label{eq:Ptheta1}
\end{subequations}
where $\theta_{jk} := \theta_j - \theta_k$, or in vector form
\bq
P - g_{jk} \textbf{1} & = & 
	A \begin{bmatrix} \cos\theta_{jk} \\ \sin\theta_{jk} \end{bmatrix}
\label{eq:Ptheta2}
\eq
where $\textbf{1} := [1\ 1]^T$ and $A$ is the positive definite matrix:
\bqn
A & := & \begin{bmatrix}
		-g_{jk}  &  b_{jk}   \\   -g_{jk}  &  -b_{jk}
		\end{bmatrix}
\eqn
This is an ellipse that passes through the origin as shown in 
Figure \ref{fig:geometry} since\footnote{Recall 
that an ellipsoid in $\mathbb R^k$ 
(without the interior) are the points $x\in \mathbb R^k$ that satisfy
\bqn
x^T M^{-1} x & = & 1
\eqn
for some positive definite matrix $M > 0$.   The $k$ principal axes are
the $k$ eigenvectors of $M$.   
The expression \eqref{eq:Ptheta2} can be written as
\bq
1 = \left\| \begin{bmatrix}
	\cos\theta_{jk}   \\   \sin\theta_{jk}
	\end{bmatrix}  \right\|^2
& = & 
\hat{P}^T
\begin{bmatrix}
	\frac{1}{b_{jk}^2}  &  0    \\    0  &   \frac{1}{g_{jk}^2}
\end{bmatrix} 
 \hat{P}
\label{eq:defhatP}
\eq
where $\hat{P} \in \mathbb R^2$ is related to $P = (P_{jk}, P_{kj})$ by
\bqn
\begin{bmatrix}
	P_{jk}  \\  P_{kj}
\end{bmatrix} 
& = & 
\sqrt{2}\,
\begin{bmatrix}
	\cos 45^\circ   &   \sin 45^\circ  \\  -\sin 45^\circ   &   \cos 45^\circ
\end{bmatrix}
\cdot \hat{P}
\ + \ 
\begin{bmatrix}
	1  \\   1
\end{bmatrix}
\eqn
This says that $\hat{P}$ defined by \eqref{eq:defhatP}
 is a standard form ellipse centered at the origin
with its major axis of length $2b_{jk}$ on the $x$-axis  and its
minor axis of length $2g_{jk}$ on the $y$-axis.  
$P$ is the ellipse obtained from $\hat{P}$ by scaling
it  by $\sqrt{2}$, rotating it by $-45^\circ$, and shifting its center to $(g_{jk}, g_{jk})$,
as shown in Figure \ref{fig:geometry}.
}
\bq
(P-g_{jk}\textbf{1})^T \left( A A^T \right)^{-1} (P-g_{jk}\textbf{1}) & = & 1
\label{eq:Ptheta3}
\eq
Let $\pi_{jk}^\text{min}$ denote the minimum $P_{jk}(\theta_{jk})$ 
and $\pi_{kj}^\text{min}$ the minimum $P_{kj}(\theta_{jk})$ on the 
ellipse as shown in the figure.
They are attained when $\theta_{jk}$ takes the values
\bqn
\theta_{jk}^{\text{min}, jk} \ := \ -\tan^{-1}\frac{b_{jk}}{g_{jk}} & \text{ and } & 
\theta_{jk}^{\text{min}, kj} \ := \ \tan^{-1}\frac{b_{jk}}{g_{jk}}
\eqn
respectively.  This can be easily checked using \eqref{eq:Ptheta1} and
\bqn
\pi_{jk}^\text{min} & := & \min_{\theta\in [-\pi, \pi]} P_{jk}(\theta_{jk})
\\
\pi_{kj}^\text{min} & := & \min_{\theta\in [-\pi, \pi]} P_{kj}(\theta_{jk})
\eqn
\begin{figure}[htbp]
\centering
	\includegraphics[width=0.45\textwidth]{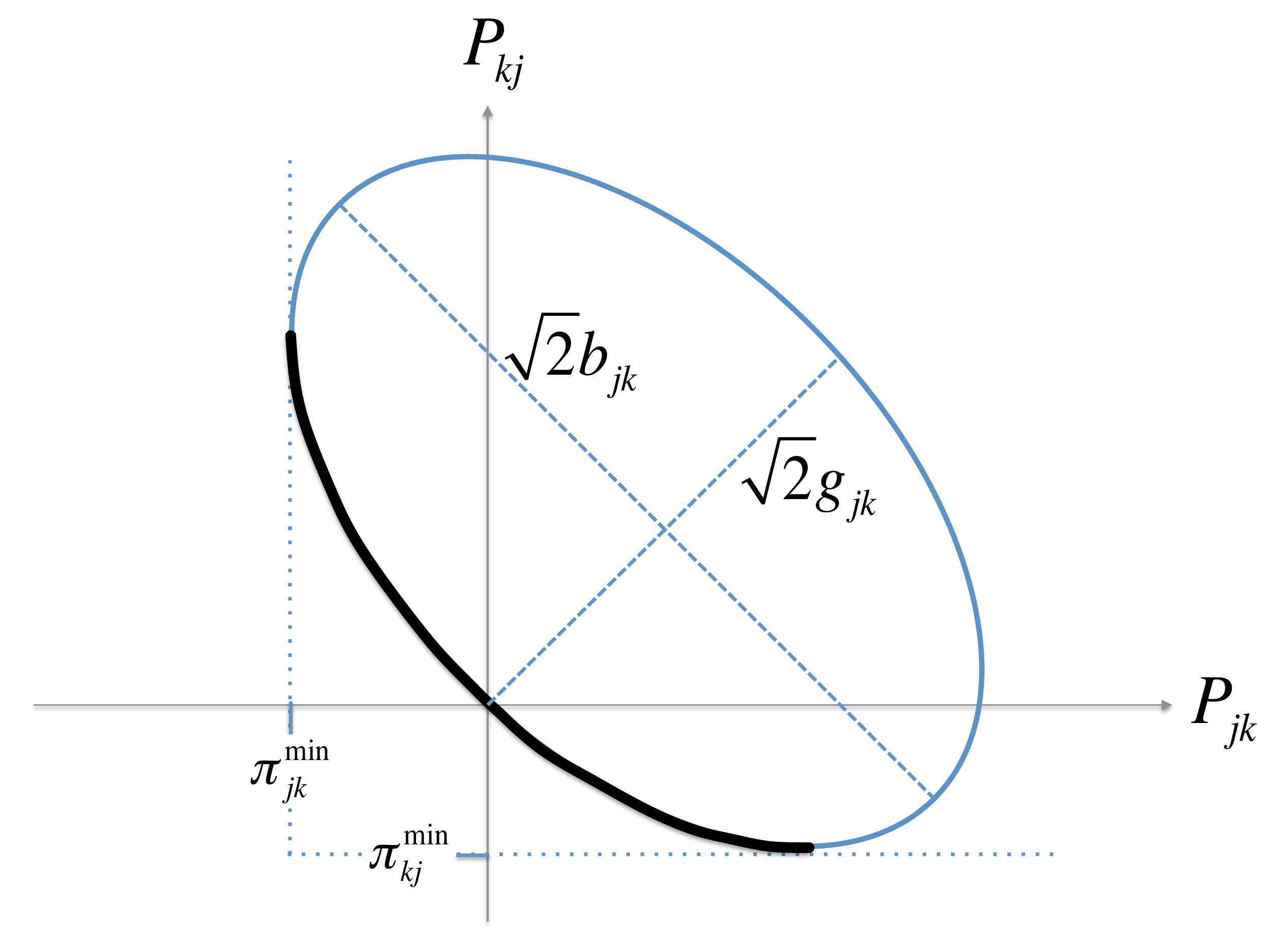}
\caption{The points $P(\theta_{jk}) := (P_{jk}(\theta_{jk}), P_{kj}(\theta_{kj}))$ 
is an ellipse as $\theta_{jk}$ varies in $[-\pi, \pi]$ with
$P=0$ when $\theta_{jk}=0$, 
$P_{jk}=\pi_{jk}^\text{min}$ when $\theta_{jk} = \theta_{jk}^{\text{min}, jk}$, and 
$P_{kj}=\pi_{kj}^\text{min}$ when $\theta_{jk} = \theta_{jk}^{\text{min}, kj}$.
}
\label{fig:geometry}
\end{figure}
The condition $\theta_{jk}^{\text{min}, jk} \leq \theta_{jk} \leq \theta_{jk}^{\text{min}, kj}$
in Theorem \ref{thm:pfOPF} restricts $\mathbb P_\theta$ to the darkened
segment of the ellipse in Figure \ref{fig:geometry}
where $\mathbb P_\theta$ coincides with the Parento front of its convex hull.

Recall the sets
\bqn
\mathbb P_\theta & := & \{\ p \ | \ p = P, P \text{ satisfies \eqref{eq:Ptheta2} for } 
		\underline{\theta}_{jk} \leq \theta_{jk} \leq \overline{\theta}_{jk} \ \}
\eqn
$\mathbb P_p := \{ p \, | \,  \underline p \leq p \leq \overline p\}$, and 
the feasible set $\mathbb P_\theta \cap \mathbb P_p$ of OPF \eqref{eq:pfOPF}.
It is clear from Figure \ref{fig:geometry} that the additional constraint
in $\mathbb P_p$ only restricts the feasible set $\mathbb P_\theta \cap \mathbb P_p$
to  a subset of $\mathbb P_\theta$, but does not change the property that 
the Pareto front of its convex hull coincides with the set itself.
\begin{lemma}
\label{lemma:PF2}
Under condition C1, for the two-bus network,
\bqn
\mathbb P_\theta & = & \mathbb O(\text{conv } \mathbb P_\theta)
\\
\mathbb P_\theta \cap \mathbb P_p  & = & 
	\mathbb O(\text{conv}( \mathbb P_\theta \cap \mathbb P_p))
\eqn
\end{lemma}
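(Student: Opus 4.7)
My plan is to identify $\mathbb P_\theta$ as a strictly monotone arc on the ellipse \eqref{eq:Ptheta3} and then invoke the principle that the Pareto front of the convex hull of such an arc is the arc itself. Differentiating \eqref{eq:Ptheta1} gives $dP_{jk}/d\theta_{jk} = g_{jk}\sin\theta_{jk} + b_{jk}\cos\theta_{jk}$ and $dP_{kj}/d\theta_{jk} = g_{jk}\sin\theta_{jk} - b_{jk}\cos\theta_{jk}$, which are strictly positive and strictly negative respectively on the open interval $(-\tan^{-1}(b_{jk}/g_{jk}),\tan^{-1}(b_{jk}/g_{jk}))$. Condition C2 places $[\underline\theta_{jk},\overline\theta_{jk}]$ strictly inside this interval, so the map $\theta_{jk}\mapsto(P_{jk}(\theta_{jk}),P_{kj}(\theta_{jk}))$ traces out a strictly decreasing curve over $[\underline\theta_{jk},\overline\theta_{jk}]$; in particular any two distinct points of $\mathbb P_\theta$ are incomparable in the product order.

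For the first equality I prove both inclusions. For $\mathbb P_\theta \subseteq \mathbb O(\text{conv}\,\mathbb P_\theta)$, at each $P(\theta_{jk})\in\mathbb P_\theta$ the vector $c(\theta_{jk}) := (-dP_{kj}/d\theta_{jk},\ dP_{jk}/d\theta_{jk})$ is strictly positive and perpendicular to the ellipse tangent. Since the closed elliptical region $\{q:(q-g_{jk}\textbf{1})^T(AA^T)^{-1}(q-g_{jk}\textbf{1})\leq 1\}$ is convex and contains $\text{conv}\,\mathbb P_\theta$, the tangent line at $P(\theta_{jk})$ is a supporting hyperplane of $\text{conv}\,\mathbb P_\theta$. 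Hence $P(\theta_{jk})$ minimizes $c(\theta_{jk})^T q$ over $q\in\text{conv}\,\mathbb P_\theta$ with $c(\theta_{jk})\geq 0$ nonzero, which by the supporting-hyperplane characterization of Pareto optima stated before C1 shows $P(\theta_{jk})\in\mathbb O(\text{conv}\,\mathbb P_\theta)$. For the reverse inclusion, any $q=(q_1,q_2)\in\text{conv}\,\mathbb P_\theta\setminus\mathbb P_\theta$ lies strictly northeast of the arc: the horizontal line $\{y:y_1=q_1\}$ meets $\mathbb P_\theta$ at a unique $\hat p$ (by the intermediate value theorem applied to the strictly increasing $P_{jk}(\cdot)$), and since the strictly decreasing arc is the lower-left boundary of its own convex hull, $\hat p_2<q_2$; hence $\hat p\leq q$ with strict inequality in the second coordinate and $q\notin\mathbb O(\text{conv}\,\mathbb P_\theta)$.

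For the second equality I observe that by the strict monotonicity of the first paragraph each of the four box constraints defining $\mathbb P_p$ cuts out a single sub-interval of $\theta_{jk}$, so $\gamma:=\mathbb P_\theta\cap\mathbb P_p$ is a single contiguous sub-arc of $\mathbb P_\theta$ and is itself strictly monotone. The supporting hyperplane at each point of $\gamma$ is still given by the same $c(\theta_{jk})$ and continues to support $\text{conv}\,\gamma\subseteq\text{conv}\,\mathbb P_\theta$, giving $\gamma\subseteq\mathbb O(\text{conv}\,\gamma)$; the same dominance argument, now applied within $\text{conv}\,\gamma$, produces for any $q\in\text{conv}\,\gamma\setminus\gamma$ a dominating arc point $\hat p\in\gamma$, giving the reverse inclusion.

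The main obstacle I anticipate is a clean formalization of the dominance step: proving that every $q\in\text{conv}\,\gamma\setminus\gamma$ is weakly dominated by some point of $\gamma$. The geometric picture is immediate, but a careful argument must construct the dominating arc point explicitly (via the horizontal line through $q$) and verify that this line actually meets the arc, which requires checking that the first coordinate of $q$ lies in the $P_{jk}$-range of $\gamma$; this follows because $q$ is a convex combination of arc points and $P_{jk}(\cdot)$ is continuous, but it is the only nontrivial step in what is otherwise a purely geometric argument.
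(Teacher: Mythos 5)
Your proposal is correct and takes essentially the same route as the paper: the paper justifies Lemma \ref{lemma:PF2} only by the geometric discussion around Figure \ref{fig:geometry} (under C2 the set $\mathbb P_\theta$ is the strictly decreasing lower-left arc of the ellipse, hence coincides with the Pareto front of its convex hull, and intersecting with the box $\mathbb P_p$ merely shortens the arc), and your monotonicity computation, supporting-hyperplane argument, and dominance step are precisely the formalization of that picture. The only cosmetic remark is that, as you correctly noted, the substance of the lemma rests on C2 rather than C1 (C1 only enters through the $c>0$ characterization of Pareto points in Remark \ref{remark:defPO}), which matches the paper's intent since the lemma is invoked under the hypotheses of Theorem \ref{thm:pfOPF}.
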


Lemma \ref{lemma:PF2} implies that the minimizers of any increasing function of $p$
over the convex set $\text{conv}( \mathbb P_\theta \cap \mathbb P_p)$
will lie in the nonconvex subset $\mathbb P_\theta \cap \mathbb P_p$ under condition C1.
The set $\text{conv}( \mathbb P_\theta \cap \mathbb P_p)$ however does not
have a simple algebraic representation.   Instead the superset
$\text{conv}( \mathbb P_\theta) \cap \mathbb P_p$, which is the feasible set of OPF-socp
\eqref{eq:pfOPF-socp}, is more amenable to computation.
These two sets are illustrated in Figure \ref{fig:geosocp}(a). 
\begin{figure}[htbp]
\centering
\subfigure [$\text{conv}( \mathbb P_\theta \cap \mathbb P_p)
	\subseteq \text{conv}( \mathbb P_\theta) \cap \mathbb P_p$]{
	\includegraphics[width=0.7\textwidth]{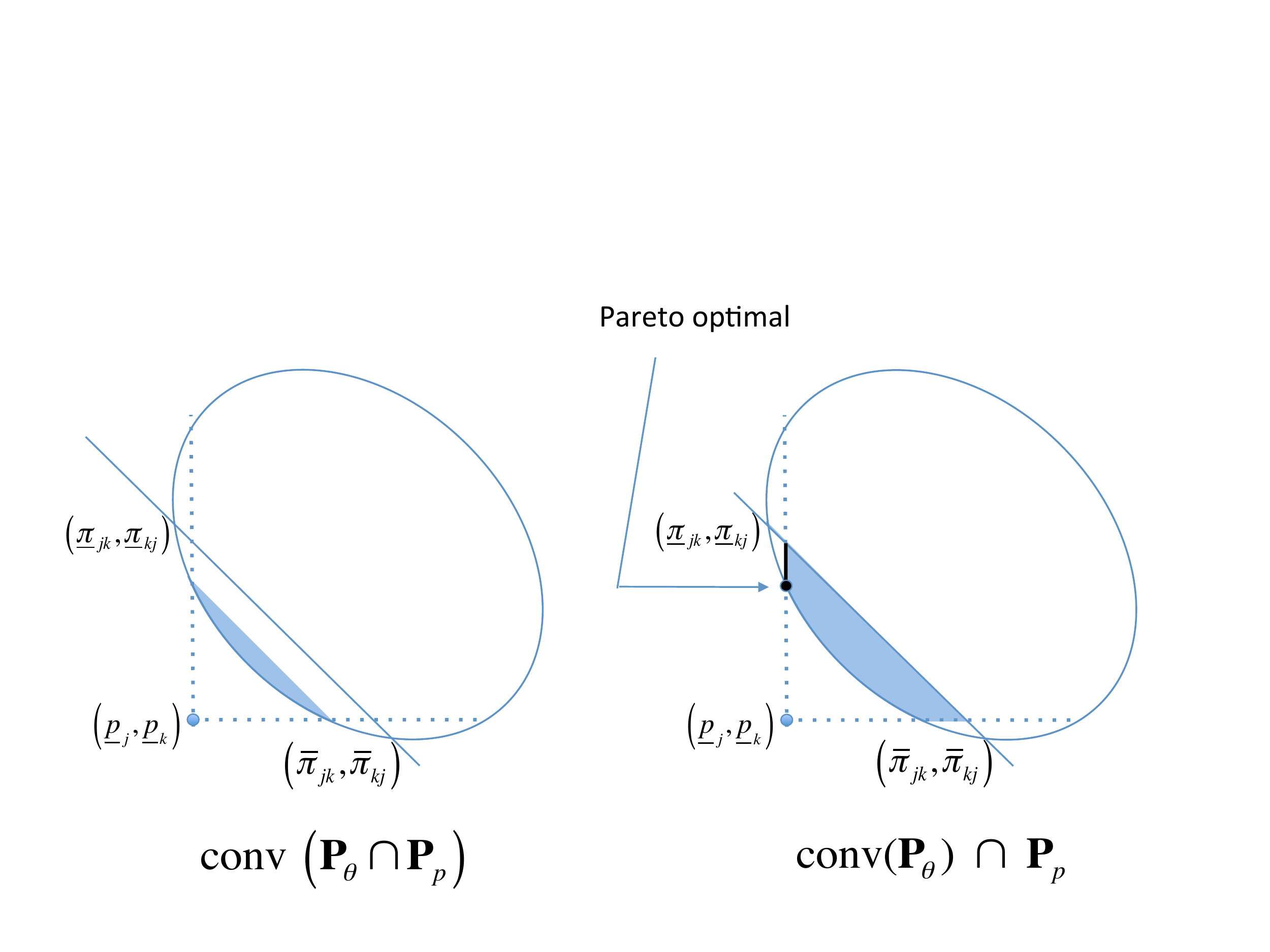}
	}
\\
\subfigure [conv$(\mathbb P_\theta)$] {
	\includegraphics[width=0.6\textwidth]{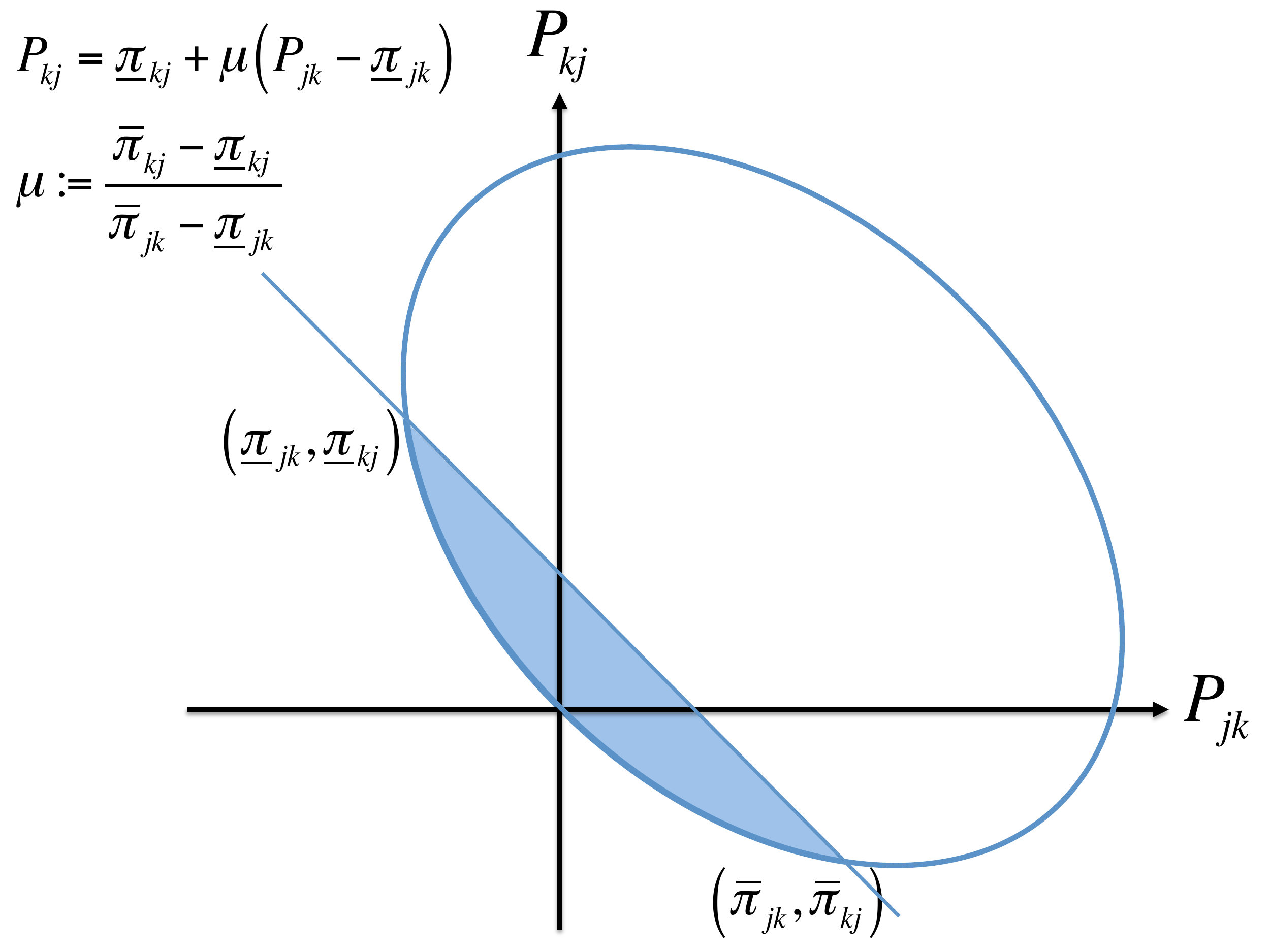}
	}
\caption{The feasible set $\text{conv}( \mathbb P_\theta) \cap \mathbb P_p$
of the SOCP relaxation for the 2-bus network is the intersection of a second-order
cone with an affine set.}
\label{fig:geosocp}
\end{figure}
The set $\text{conv}( \mathbb P_\theta) \cap \mathbb P_p$ 
has two important properties: under C1,
\bee
\item[(i)] It has the same Pareto front, i.e., 
$\mathbb O (\text{conv}( \mathbb P_\theta) \cap \mathbb P_p) 
	= \mathbb O (\text{conv}( \mathbb P_\theta \cap \mathbb P_p))
= \mathbb P_\theta \cap \mathbb P_p$ by Lemma \ref{lemma:PF2}.

\item[(ii)] It is the intersection of a second-order cone with an affine set.
\eee
\begin{remark}
\label{remark:defPO}
Strictly speaking, $\mathbb O (\text{conv}( \mathbb P_\theta) \cap \mathbb P_p) 
	\supseteq \mathbb O (\text{conv}( \mathbb P_\theta \cap \mathbb P_p))$
in (i) because when $P_{jk}=\underline p_j$,
the Pareto optimal points $(\underline p_j, P_{kj})$ are nonunique
where $P_{kj}$ can take any value on the darkened segment of the line
$P_{jk}=\underline p_j$ in Figure \ref{fig:geosocp}(a).
In this case we will regard only the point of intersection of 
$P_{jk}=\underline p_j$ and the ellipse as the unique Pareto optimal
point in $\mathbb O (\text{conv}( \mathbb P_\theta) \cap \mathbb P_p)$ and
ignore the other points since they are not feasible (do not lie on the ellipse).
The case of $P_{kj}=\underline p_k$ is handled similarly.
Then $\mathbb O (\text{conv}( \mathbb P_\theta) \cap \mathbb P_p) 
	= \mathbb O (\text{conv}( \mathbb P_\theta \cap \mathbb P_p))$
under this interpretation of Pareto optimal points.
This corresponds to, for our purposes, defining Pareto optimal points as
the set of minimizers of: 
\bqn
\min_{P\in\text{conv}(\mathbb P_\theta)} \ \ c^T P
\eqn
for some $c>0$, as opposed to nonzero $c\geq 0$
$(P_{jk}=\underline p_j$ corresponds to $c = (c_1, 0), c_1>0$).
This is why we require in condition C1 that $C(p)$ is \emph{strictly
increasing} in each $p_j$.
We will henceforth use this characterization of Pareto optimal points
unless otherwise specified.
\end{remark}

To see (ii), we use \eqref{eq:Ptheta3} to specify the set 
conv$(\mathbb P_\theta)$  as the intersection of a second-order cone 
with an affine set (see Figure \ref{fig:geosocp}(b)), as follows:\footnote{Note that
the first equation is a second-order cone
$t^2 \geq (P-g_{jk}\textbf{1})^T ( A A^T)^{-1} (P-g_{jk}\textbf{1})$ intersecting
with $t=1$.}
\bqn
1 & \geq & (P-g_{jk}\textbf{1})^T ( A A^T )^{-1} (P-g_{jk}\textbf{1}) 
\\
P_{kj} & \leq & \underline{\pi}_{kj} + 
\frac{\overline{\pi}_{kj} - \underline{\pi}_{kj}} {\overline{\pi}_{jk} - \underline{\pi}_{jk}}\, 
	(P_{jk} - \underline{\pi}_{jk}
\eqn
where $(\underline{\pi}_{jk}, \underline{\pi}_{kj}) := 
(P_{jk}(\underline{\theta}_{jk}), P_{kj}(\underline{\theta}_{jk}))$ and
$(\overline{\pi}_{jk}, \overline{\pi}_{kj}) := 
(P_{jk}(\overline{\theta}_{jk}), P_{kj}(\overline{\theta}_{jk}))$.
This implies that the problem \eqref{eq:pfOPF-socp} is indeed an SOCP for
the two-bus case.

The SOCP relaxation of OPF \eqref{eq:pfOPF} enlarges the feasible set
$\mathbb P_\theta \cap \mathbb P_p$ to the convex superset 
$\text{conv}(\mathbb P_\theta) \cap \mathbb P_p$.   
Under condition C1,
\emph{every} minimizer lies in its Pareto front and hence, by property (i),
in the original nonconvex feasible set $\mathbb P_\theta \cap \mathbb P_p$.
We have hence proved Theorem \ref{thm:pfOPF} for the two-bus case.
\vspace{0.2in}

\subsubsection*{Case 2: tree network}

Let $\mathbb F_\theta^{jk}$ denote the set of branch power flows on 
each line $(j,k)\in E$:
\bqn
\mathbb F_\theta^{jk} & := & \{\ (P_{jk}, P_{kj}) \ | \ 
	(P_{jk}, P_{kj}) \text{ satisfies \eqref{eq:Ptheta2} for } 
	\underline{\theta}_{jk} \leq \theta_{jk} \leq \overline{\theta}_{jk} \ \}
\eqn
Since the network is a tree, the set $\mathbb F_\theta$ of branch 
power flows on all lines is simply the product set:
\bq
\mathbb F_\theta & := &  \{ P:= (P_{jk}, P_{kj}, (j,k)\in E) \ | \ 
	P \text{ satisfies \eqref{eq:Ptheta2} for } 
	\underline{\theta}_{jk} \leq \theta_{jk} \leq \overline{\theta}_{jk}, (j,k)\in E \ \}
\nonumber
\\
& =  &  \underset{(j,k)\in E} {\prod} \ \, \mathbb F^{jk}_\theta
\label{eq:prodF}
\eq
because given any $(\theta_{jk}, (j,k)\in E)$ there is always a (unique)
$(\theta_j, j\in N^+)$ that satisfies $\theta_{jk} = \theta_j - \theta_k$.
(This is equivalent to the cycle condition \eqref{eq:cyclecond.1}.)
If the network has cycles then this is not possible for some vectors
$(\theta_{jk}, (j,k)\in E)$
and $\mathbb F_\theta$ is no longer a product set of $\mathbb F_\theta^{jk}$.

Since the power injections $p$ are related to the branch flows $P$
by $p_j = \sum_{k: j\sim k} P_{jk}$, the injection region \eqref{eq:defP} 
is a linear transformation of $\mathbb F_\theta$:
\bqn
\mathbb P_\theta & = & A \mathbb F_\theta
\eqn
for some $(n+1)\times 2m$ dimensional matrix $A$.
{Matrix $A$ has full row rank and it can 
be argued that there is a bijection between $P_\theta$ and $F_\theta$ using
the fact that the graph is a tree \cite{LavaeiTseZhang2012}.}
We can therefore freely work with either $p\in \mathbb P_\theta$ or the corresponding
$P\in \mathbb F_\theta$.

To prove the second assertion of Theorem \ref{thm:pfOPF}, note that the argument
for the two-bus case shows that conv$(\mathbb F_\theta^{jk})$, $(j,k)\in E$, is the intersection of a
second-order cone with an affine set.  This, together with Lemma \ref{lemma:misc} below,
the fact that $\mathbb F_\theta$ is a direct product of $\mathbb F_\theta^{jk}$ and 
the fact that $A$ is of full rank, imply that conv$(\mathbb P_\theta) \cap \mathbb P_p$ is
the intersection of a second-order cone with an affine set.   Hence \eqref{eq:pfOPF-socp}
is indeed an SOCP for a tree network.   Therefore it suffices to prove the first assertion of
Theorem \ref{thm:pfOPF}:
\bq
\mathbb P_\theta \cap \mathbb P_p & = &
\mathbb O	( \text{conv}(\mathbb P_\theta) \cap \mathbb P_p )
\label{eq:P=O}
\eq
because it implies that, under C1, \emph{every} minimizer of OPF-socp \eqref{eq:pfOPF-socp} 
lies in its Pareto front and hence is feasible and optimal for OPF \eqref{eq:pfOPF}
(see also Remark \ref{remark:defPO}).  Hence SOCP relaxation is exact.

We are hence left to prove \eqref{eq:P=O}.   Half of the equality follows from the following 
simple properties of Pareto front and convex hull.
\begin{lemma}
\label{lemma:misc}
Let $\mathbb B, \mathbb C \subseteq \mathbb R^k$ be arbitrary sets,  
$\mathbb D:= \{x\in \mathbb R^k | Mx\leq c\}$ be an affine set, and
$M$ a matrix and $b$ a vector of appropriate dimensions.  
\bee
\item[(1)] conv$(M\, \mathbb B) = M\, \text{conv}(\mathbb B)$
	and conv$(\mathbb B\times \mathbb C) = 
	\text{conv}(\mathbb B) \times \text{conv}(\mathbb C)$.
\item[(2)] Suppose $\mathbb B$ and $\mathbb C$ are convex
	and a point is Pareto optimal over a set if and only if it minimizes
	$c^T x$ over the set for some $c>0$.\footnote{In general,
	a point is Pareto optimal over a convex set if and only if it minimizes $c^Tx$ 
	over the set for some nonzero $c\geq 0$, as opposed to $c>0$.  In that case,
	$\mathbb O(\mathbb B\times \mathbb C) \supseteq 
	\mathbb O(\mathbb B) \times \mathbb O(\mathbb C)$; c.f. Remark \ref{remark:defPO}.
	}	
	Then $\mathbb O(M\, \mathbb B) = M\, \mathbb O(\mathbb B)$ and
	$\mathbb O(\mathbb B\times \mathbb C) = 
	\mathbb O(\mathbb B) \times \mathbb O(\mathbb C)$.
		
\item[(3)] If $\mathbb B = \mathbb O(\text{conv }\mathbb B)$ then
	$\mathbb B \cap \mathbb D \subseteq \mathbb O (\text{conv}(\mathbb B) \cap \mathbb D)$.
\eee
\end{lemma}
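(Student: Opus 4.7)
My plan is to prove each identity by a pair of inclusions, relying on basic convex analysis for (1), the hypothesis that Pareto optimality coincides with minimization of $c^T x$ for some $c>0$ for (2), and a restriction argument for (3).

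For (1), both identities follow by double inclusion. The set $M\,\text{conv}(\mathbb{B})$ is convex as a linear image of a convex set and it contains $M\mathbb{B}$, hence it contains $\text{conv}(M\mathbb{B})$; conversely any element of $M\,\text{conv}(\mathbb{B})$ has the form $M\sum_i \lambda_i b_i = \sum_i \lambda_i (Mb_i)$, which is a convex combination of points of $M\mathbb{B}$. Similarly, $\text{conv}(\mathbb{B}) \times \text{conv}(\mathbb{C})$ is convex and contains $\mathbb{B}\times\mathbb{C}$, so it contains $\text{conv}(\mathbb{B}\times\mathbb{C})$; conversely, given $b = \sum_i \lambda_i b_i$ and $c = \sum_j \mu_j c_j$, the double convex combination $(b,c) = \sum_{i,j}\lambda_i \mu_j (b_i, c_j)$ exhibits the pair as an element of $\text{conv}(\mathbb{B}\times\mathbb{C})$.

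For (2), I invoke the minimizer characterization. In the product case, decompose a cost direction as $(\alpha,\beta)$ with $(\alpha,\beta) > 0$ iff $\alpha > 0$ and $\beta > 0$; since $\min_{(b,c)\in \mathbb{B}\times\mathbb{C}} (\alpha^T b + \beta^T c)$ separates into $\min_{b\in\mathbb{B}} \alpha^T b + \min_{c\in \mathbb{C}} \beta^T c$, varying $(\alpha,\beta) > 0$ sweeps out exactly $\mathbb{O}(\mathbb{B}) \times \mathbb{O}(\mathbb{C})$, establishing $\mathbb{O}(\mathbb{B}\times\mathbb{C}) = \mathbb{O}(\mathbb{B}) \times \mathbb{O}(\mathbb{C})$. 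For the linear image, the identity $c^T(Mx) = (M^T c)^T x$ shows that $Mx^\star$ minimizes $c^T y$ over $M\mathbb{B}$ iff $x^\star$ minimizes $(M^T c)^T x$ over $\mathbb{B}$, so the minimizer sets correspond through $M$. Matching the two ``Pareto cones'' via the characterization requires the positivity compatibility that $c > 0$ on the range corresponds to $M^T c > 0$ on the domain; this is automatic for the incidence-style matrix $A$ used in the paper (each $(A^T c)_{(j,k)} = c_j > 0$ whenever $c > 0$) and I would flag it as a mild structural assumption implicit in the intended applications.

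For (3), take any $x \in \mathbb{B}\cap\mathbb{D}$. Since $x\in \mathbb{B} = \mathbb{O}(\text{conv}\,\mathbb{B})$ by hypothesis, there exists $c > 0$ such that $x$ minimizes $c^T x$ over $\text{conv}\,\mathbb{B}$. As $\text{conv}(\mathbb{B}) \cap \mathbb{D} \subseteq \text{conv}\,\mathbb{B}$ and $x$ belongs to the smaller set, $x$ remains a minimizer of $c^T x$ over $\text{conv}(\mathbb{B})\cap\mathbb{D}$, placing $x \in \mathbb{O}(\text{conv}(\mathbb{B})\cap\mathbb{D})$. The main obstacle of the whole lemma is the linear-image step of (2): the correspondence of minimizers through $M^T$ is purely formal, but transferring \emph{Pareto} optimality across $M$ hinges on the positivity compatibility above, so this is the point I would be most careful to justify in the specific setting in which the lemma is deployed.
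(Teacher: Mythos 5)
Your proof is correct and follows essentially the same route as the paper's: double inclusion via convex combinations for (1), the $c>0$ minimizer characterization with cost separation and the identity $c^T(Mx)=(M^Tc)^Tx$ for (2), and the restriction-to-a-subset argument for (3). The one place you go beyond the paper is in flagging that transferring Pareto optimality through $M$ needs $c>0$ to correspond to $M^Tc>0$ (and vice versa); the paper's proof silently asserts this equivalence, and your observation that it holds for the incidence-style matrix $A$ actually used (where $(A^Tc)_{(j,k)}=c_j$) is exactly the right justification for the intended application.
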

For ease of reference we prove Lemma \ref{lemma:misc} below.

The next lemma says that the feasible set of OPF \eqref{eq:pfOPF} is
a subset of the feasible set of its SOCP relaxation \eqref{eq:pfOPF-socp}.
\begin{lemma}
\label{lemma:subset}
$\mathbb P_\theta \cap \mathbb P_p \ \subseteq \ \mathbb O (
			\text{conv}(\mathbb P_\theta) \cap \mathbb P_p )$.
\end{lemma}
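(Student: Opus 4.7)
The plan is to reduce Lemma \ref{lemma:subset} to the already-established two-bus case (Lemma \ref{lemma:PF2}) by exploiting the product structure \eqref{eq:prodF} of $\mathbb F_\theta$ on a tree and then pushing everything through the full-rank linear map $A$ that sends branch flows to injections.

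First I would show that the branch-flow analogue of \eqref{eq:P=O} without the box constraint holds, namely $\mathbb F_\theta = \mathbb O(\text{conv}\,\mathbb F_\theta)$. By Lemma \ref{lemma:PF2} applied edge-by-edge, $\mathbb F_\theta^{jk} = \mathbb O(\text{conv}\,\mathbb F_\theta^{jk})$ for every $(j,k)\in E$. By \eqref{eq:prodF} and Lemma \ref{lemma:misc}(1), $\text{conv}\,\mathbb F_\theta = \prod_{(j,k)\in E}\text{conv}\,\mathbb F_\theta^{jk}$, and by Lemma \ref{lemma:misc}(2) the Pareto front of a direct product equals the direct product of the Pareto fronts. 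Combining these three facts yields $\mathbb O(\text{conv}\,\mathbb F_\theta) = \prod_{(j,k)\in E}\mathbb O(\text{conv}\,\mathbb F_\theta^{jk}) = \prod_{(j,k)\in E}\mathbb F_\theta^{jk} = \mathbb F_\theta$.

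Second, I would transport this identity to the injection space via $\mathbb P_\theta = A\,\mathbb F_\theta$. Using Lemma \ref{lemma:misc}(1), $\text{conv}\,\mathbb P_\theta = A\,\text{conv}\,\mathbb F_\theta$, and using Lemma \ref{lemma:misc}(2), $\mathbb O(A\,\text{conv}\,\mathbb F_\theta) = A\,\mathbb O(\text{conv}\,\mathbb F_\theta)$. (The hypothesis of Lemma \ref{lemma:misc}(2) applies because $A$ has full row rank and because $c>0$ is preserved under pullback by the convention on Pareto optimality from Remark \ref{remark:defPO}, tied to assumption C1.) Therefore $\mathbb O(\text{conv}\,\mathbb P_\theta) = A\,\mathbb F_\theta = \mathbb P_\theta$.

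Finally, I would apply Lemma \ref{lemma:misc}(3) with $\mathbb B := \mathbb P_\theta$ and the affine set $\mathbb D := \mathbb P_p$, which is exactly of the required form $\{p\mid \underline p\le p\le\overline p\}$. Since the previous step gives the hypothesis $\mathbb B = \mathbb O(\text{conv}\,\mathbb B)$, the conclusion reads $\mathbb P_\theta\cap\mathbb P_p \subseteq \mathbb O(\text{conv}(\mathbb P_\theta)\cap\mathbb P_p)$, which is Lemma \ref{lemma:subset}. The main subtlety I expect is the strictness of $c$ in the Pareto-optimality characterization: without C1 one would only get nonzero $c\ge 0$ and then Lemma \ref{lemma:misc}(2) degrades to an inclusion, breaking the equality $\mathbb O(A\,\mathbb B)=A\,\mathbb O(\mathbb B)$ on the boundary where some coordinate is at its lower bound. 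Interpreting Pareto points as in Remark \ref{remark:defPO} (minimizers of $c^Tp$ with $c>0$) sidesteps this, and this is exactly the content of C1.
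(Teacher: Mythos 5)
Your proposal is correct and follows essentially the same route as the paper's proof: both establish $\mathbb O(\text{conv}(\mathbb P_\theta)) = \mathbb P_\theta$ by combining the two-bus result (Lemma \ref{lemma:PF2}) with the product structure \eqref{eq:prodF}, the commutation properties in Lemma \ref{lemma:misc}(1)--(2) for the full-row-rank map $A$, and then conclude via Lemma \ref{lemma:misc}(3) with $\mathbb D = \mathbb P_p$. Your remark on the necessity of strict positivity of $c$ (i.e., C1 and the convention of Remark \ref{remark:defPO}) matches the paper's treatment.
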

\begin{proof}[Proof of Lemma \ref{lemma:subset}]
We have
\bqn
\mathbb O	( \text{conv}(\mathbb P_\theta) )
& = & 
\mathbb O	( \text{conv}(A\, \mathbb F_\theta) )  
\\
& = &
\mathbb O	( A\ \text{conv}(\mathbb F_\theta)) 
\\
& = & 
\mathbb O\left( A \underset{(j,k)\in E} {\prod} 
	\text{conv}\left(\mathbb F^{jk}_\theta\right) \right)  
\\
& = & 
A \underset{(j,k)\in E} {\prod} 
	\mathbb O \left( \text{conv} \left(\mathbb F^{jk}_\theta\right) \right) 
\\
& = & 
A \underset{(j,k)\in E} {\prod} \mathbb F^{jk}_\theta 
\\
& = &
\mathbb P_\theta  
\eqn
where the second equality follows from Lemma \ref{lemma:misc}(1),
the third equality follows from \eqref{eq:prodF} and Lemma \ref{lemma:misc}(1),
the fourth equality follows from Lemma \ref{lemma:misc}(2), 
the fifth equality follows from Lemma \ref{lemma:PF2} where
$\mathbb F_\theta^{jk}$ plays the role of $\mathbb P_\theta$, 
and the last equality follows from \eqref{eq:prodF} and $\mathbb P_\theta = A\mathbb F_\theta$.
Lemma \ref{lemma:misc}(3) then implies the lemma.
\end{proof}

Lemma \ref{lemma:subset}  means that every optimal solution of OPF \eqref{eq:pfOPF} is an 
optimal solution of its SOCP \eqref{eq:pfOPF-socp}.  For exactness
of OPF-socp \eqref{eq:pfOPF-socp} we need the converse to hold as well.
The remainder of the proof is to show this is indeed true, proving \eqref{eq:P=O}.
\begin{lemma}
\label{lemma:supset}
$\mathbb P_\theta \cap \mathbb P_p \ \supseteq \ \mathbb O (
			\text{conv}(\mathbb P_\theta) \cap \mathbb P_p )$.
\end{lemma}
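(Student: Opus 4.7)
The plan is to mirror the two-bus argument of Case 1 via a Lagrangian decomposition that exploits the product structure $\text{conv}(\mathbb F_\theta) = \prod_{(j,k)\in E} \text{conv}(\mathbb F_\theta^{jk})$ for trees, reducing the $n$-dimensional Pareto-optimality condition to a collection of two-dimensional linear minimizations whose geometry is already understood from Lemma \ref{lemma:PF2}.

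Starting from an arbitrary $p^* \in \mathbb O(\text{conv}(\mathbb P_\theta) \cap \mathbb P_p)$, by Remark \ref{remark:defPO} there exists $c>0$ such that $p^*$ minimizes $c^T p$ over $\text{conv}(\mathbb P_\theta) \cap \mathbb P_p$. Parametrize $p = AP$ with $P \in \text{conv}(\mathbb F_\theta)$ and invoke strong duality on the convex program $\min\{c^T AP : P \in \text{conv}(\mathbb F_\theta), \ \underline p \leq AP \leq \overline p\}$ (feasibility of OPF \eqref{eq:pfOPF} ensures constraint qualification). This yields multipliers $\underline\lambda,\overline\lambda \geq 0$ obeying complementary slackness and an optimal $P^*$, $AP^* = p^*$, that minimizes the Lagrangian $(A^T\tilde c)^T P$ over $\text{conv}(\mathbb F_\theta)$ for $\tilde c := c + \overline\lambda - \underline\lambda$. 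Since $[A^T\tilde c]_{(j,k)} = \tilde c_j$ and the feasible set is a Cartesian product over edges, the minimization decouples:
\bqn
(P_{jk}^*, P_{kj}^*) \ \in \ \arg\min_{(P_{jk},P_{kj}) \in \text{conv}(\mathbb F_\theta^{jk})} \tilde c_j P_{jk} + \tilde c_k P_{kj}, \qquad (j,k)\in E.
\eqn

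Next, analyze each edge. The set $\text{conv}(\mathbb F_\theta^{jk})$ has the arc $\mathbb F_\theta^{jk}$ as its full extreme-point set (the arc is strictly concave under C2) and, by Lemma \ref{lemma:PF2}, as its Pareto front. A normal-cone computation along the arc boundary shows that $(-\tilde c_j,-\tilde c_k) \in N((P_{jk}^*,P_{kj}^*);\text{conv}(\mathbb F_\theta^{jk}))$ lies in the closed third quadrant whenever the minimizer sits on the arc, so KKT forces $\tilde c_j,\tilde c_k \geq 0$; the only way $(P_{jk}^*,P_{kj}^*)$ can sit on the chord interior (off the arc) is if $\tilde c$ is perpendicular to that chord with $\tilde c_j,\tilde c_k \leq 0$, in which case complementary slackness forces $\underline\lambda_j \geq c_j>0$ and $\underline\lambda_k \geq c_k>0$, hence $p_j^* = \underline p_j$ and $p_k^* = \underline p_k$. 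In the generic case we conclude $(P_{jk}^*,P_{kj}^*) \in \mathbb F_\theta^{jk}$ for every edge, so $P^* \in \mathbb F_\theta$ and $p^* = AP^* \in \mathbb P_\theta$, as desired.

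The main obstacle is the degenerate case of binding lower bounds on both ends of a ``bad'' edge, where the Lagrangian is constant along the chord rather than strictly minimized at a vertex. The plan is to modify $P^*$ within this Lagrangian-optimal face so as to land at an arc endpoint on the bad edge, then use the $(n-1)$-dimensional kernel of $A$ (which exists because $\tilde G$ is a tree) to compensate on neighboring edges while simultaneously preserving $AP^* = p^*$ and keeping every edge component in its own $\text{conv}(\mathbb F_\theta^{jk})$. Carrying this out as a leaf-to-root recursion --- resolving bad edges adjacent to a leaf first, where there is no downstream coupling, and absorbing the compensation into upstream edges --- produces $\hat P \in \mathbb F_\theta$ with $A\hat P = p^*$, so $p^* \in \mathbb P_\theta$. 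The strict positivity of $c$ (from C1) rules out axis-aligned degeneracies that would let the minimum slip off the arc, and C2 guarantees each arc is the full strictly concave Pareto portion of its ellipse, which is what lets the tree recursion terminate without producing an off-arc component anywhere.
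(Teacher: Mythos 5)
Your first half---reducing Pareto optimality of $p^*$ to a Lagrangian with modified cost $\tilde c = c + \overline\lambda - \underline\lambda$, decoupling over edges via the product structure of $\text{conv}(\mathbb F_\theta)$ for a tree, and identifying the only problematic edges as those with $\tilde c_j \le 0$ and $\tilde c_k \le 0$ (whence, since $c>0$, complementary slackness pins $p_j^*=\underline p_j$ and $p_k^*=\underline p_k$)---is exactly the paper's argument. The divergence, and the gap, is in how you dispose of such a ``bad'' edge. Your plan is to slide $(P_{jk}^*,P_{kj}^*)$ from the chord onto the arc and then ``compensate on neighboring edges'' using the kernel of $A$, recursing from leaves to the root, so as to produce some $\hat P\in\mathbb F_\theta$ with $A\hat P=p^*$. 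But since $\mathbb P_\theta = A\,\mathbb F_\theta$, the existence of such a $\hat P$ is literally the statement $p^*\in\mathbb P_\theta$ you are trying to prove; if $p^*$ were not in $\mathbb P_\theta$ no amount of kernel redistribution could succeed, so the recursion cannot be justified by local moves alone---it must exploit some global property of $p^*$ that you have not isolated. Concretely, at a bad edge $(l,i)$ with $l$ a leaf you have $P^*_{li}=p^*_l=\underline p_l$ pinned, so moving to the arc point with the same abscissa strictly changes $P^*_{il}$ and hence $p^*_i$; compensating at $i$ perturbs the next edge, and nothing in your argument guarantees that every perturbed pair stays in (let alone lands on) its own arc all the way up the tree.

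The paper closes this case quite differently (Lemma \ref{lemma:-vec}): it shows the \emph{original} minimizer $P$ is already arc-valued on the whole maximal subtree $T$ of nodes with $\tilde c_i\le 0$ (where $p_i=\underline p_i$). The mechanism is a comparison with an arbitrary $\tilde P$ that is arc-valued and satisfies $\underline p\le A\tilde P\le\overline p$---such a point exists precisely because OPF \eqref{eq:pfOPF} is assumed feasible, an assumption your hard case never invokes in any essential way (you use feasibility only for constraint qualification). One propagates the inequalities $P_{li}\le\tilde P_{li}$ and $P_{il}\ge\tilde P_{il}$ from the boundary of $T$ toward its root, then uses $p_0=\underline p_0\le\tilde p_0$ at the root to force equality, and propagates equality back down, concluding $P=\tilde P$ on $T$ and hence $(P_{li},P_{il})\in\mathbb F_\theta^{li}$ there. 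To complete your proof you would need either to reproduce this comparison argument or to give an independent reason why your leaf-to-root redistribution terminates with every edge on its arc; as written, that key step is asserted rather than proved.
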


The proof of Lemma \ref{lemma:subset} shows that 
$\mathbb P_\theta = \mathbb O	( \text{conv}(\mathbb P_\theta) )$, so the
converse of Lemma \ref{lemma:misc}(3) would imply Lemma \ref{lemma:supset}.
Figure \ref{fig:PO} and the explanation in its caption, however, illustrate why the 
converse of Lemma \ref{lemma:misc}(3) generally does not hold.
\begin{figure}[htbp]
\centering
	\includegraphics[width=0.6\textwidth]{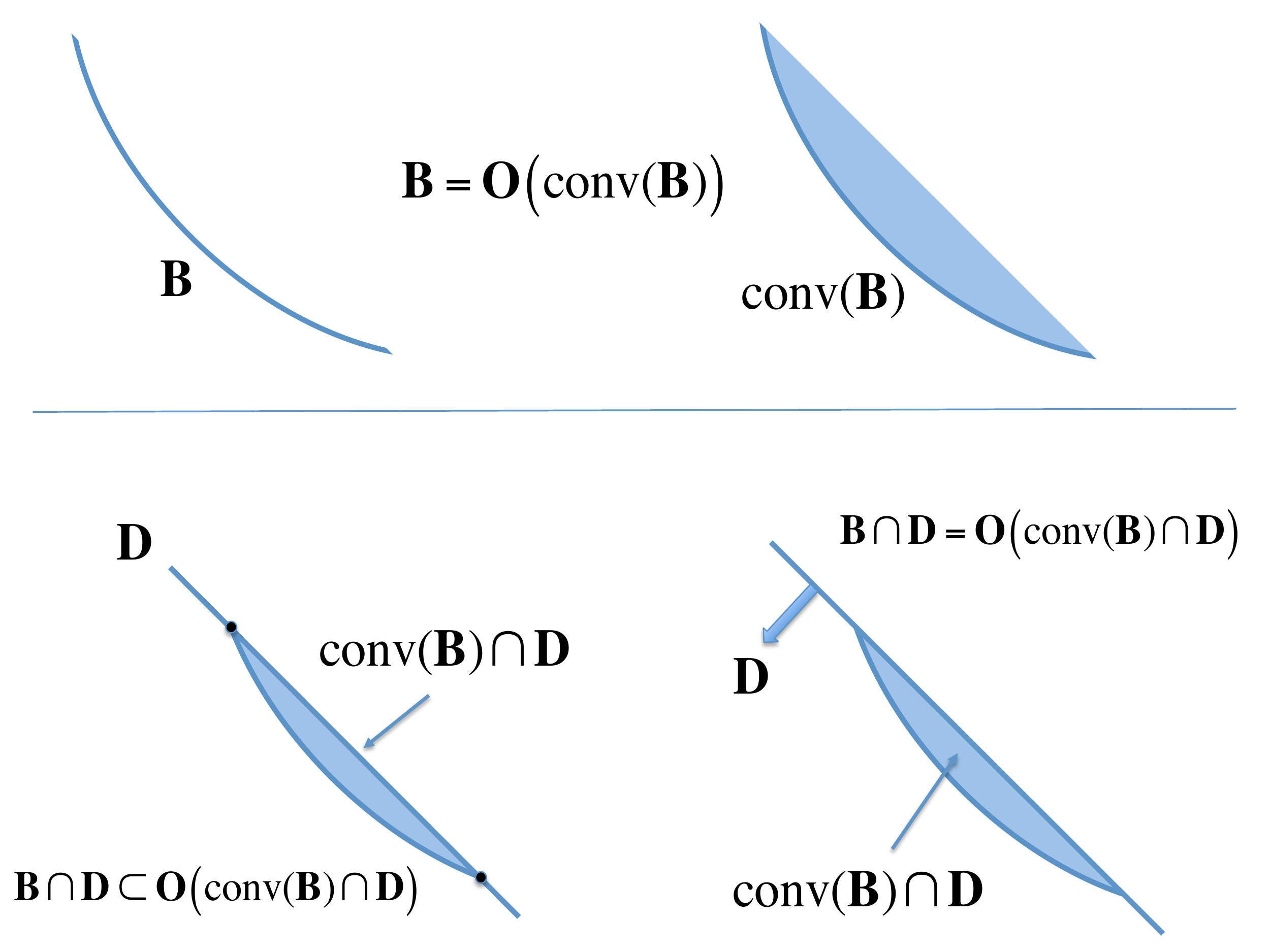}
\caption{The upper panel shows a set $\mathbb B$ and its convex hull
$\text{conv}(\mathbb B)$ with the property that 
$\mathbb B = \mathbb O(\text{conv}(\mathbb B))$.
The lower panel shows two affine sets $\mathbb D$.  On the left $\mathbb D$
is a hyperplane; $\mathbb B \cap \mathbb D$ consists of two intersection points 
and is a strict subset of $\mathbb O(\text{conv}(\mathbb B) \cap \mathbb D)$.
On the right $\mathbb D$ is a halfspace and 
$\mathbb B \cap \mathbb D = \mathbb O(\text{conv}(\mathbb B) \cap \mathbb D)$.
}
\label{fig:PO}
\end{figure}
To prove Lemma \ref{lemma:supset} we need to exploit the structure of 
$\mathbb P_\theta, \mathbb F_\theta, \mathbb P_p$.

\begin{proof}[Proof of Lemma \ref{lemma:supset}]

Take any point $p \in \mathbb O( \text{conv}(\mathbb P_\theta) \cap \mathbb P_p )$.
We now show that $p \in \mathbb P_\theta \cap P_p$.
By definition of Pareto optimality, $p$ is a minimizer of 
\bqn
\min_{\hat p\in \text{conv}(\mathbb P_\theta)} \ c^T \hat p
&\  \text{ subject to }\  & \underline{p}\ \leq  \ \hat p \  \leq \  \overline{p}
\eqn
for some $c> 0$.
This minimization is equivalent to:
\bqn
\min_{\alpha_j, \hat p_j}  &  & c^T \sum_j\  \alpha_j \hat p_j
\\
\text{subject to} & &
\alpha_j\ \geq \ 0, \ \ \sum_j \alpha_j \ = \ 1, \ \ \hat p_j  \in \mathbb P_\theta
\\
& & \underline{p} \ \, \leq \ \, \sum_j \alpha_j \, \hat p_j \  \, \leq \  \, \overline{p}
\eqn
We can uniquely express $p$ and $p_j$ in terms of branch flows
in $\mathbb F_\theta$,
$p = AP$ and $\hat p_j = A\hat P_j$.  Then $P$ is
in $\text{conv}(\mathbb F_\theta)$ and a minimizer of
\bqn
\min_{\hat P \in \text{conv}(\mathbb F_\theta)} \ c^T A \hat P 
&\  \text{ subject to } \  & \underline{p} \ \leq  \  A \hat P \  \leq\  \overline{p}
\eqn
It suffices to prove that $P\in \mathbb F_\theta$, which then implies
that $p = AP \in \mathbb P_\theta \cap P_p$.

The Slater's condition holds for OPF \eqref{eq:pfOPF}.  
By strong duality there exist Lagrange
multipliers $\overline{\lambda} \geq 0$ and $\underline{\lambda} \geq 0$
such that $P$ is a minimizer of the Lagrangian:
\bq
\min_{\hat P \in \text{conv}(\mathbb F_\theta)} & \!\!\!\! &
	\left( c^T + \overline{\lambda}^T - \underline{\lambda}^T  \right) A \hat P 
	\ - \ \overline{\lambda}^T\overline p \ + \ \underline\lambda^T \underline p
\label{eq:saddle}
\eq
If $\overline{c}:= c^T + \overline{\lambda}^T - \underline{\lambda}^T \geq 0$ 
and is nonzero then
$P\in \mathbb F_\theta$ since 
$\mathbb O(\text{conv}(\mathbb F_\theta)) = \mathbb F_\theta$.\footnote{The
minimization \eqref{eq:saddle} does not have the problem discussed in Remark \ref{remark:defPO}
because the feasible set is conv$(\mathbb F_\theta)$, not conv$(\mathbb F_\theta) \cap P_p$,
and hence $\mathbb O(\text{conv}(\mathbb F_\theta)) = \mathbb F_\theta$ for nonzero $c\geq 0$.}
We are left to deal with the case where either $\overline{c}=0$
(in which case every point in conv$(\mathbb F_\theta)$ is Pareto optimal)
or there exists a $j$ such that $\overline c_j< 0$.

Since $\mathbb F_\theta = \prod_{(j,k)\in E} \mathbb F_\theta^{jk}$, 
$P\in \mathbb F_\theta$ if and only if $(P_{jk}, P_{kj})\in \mathbb F_\theta^{jk}$.
Moreover \eqref{eq:saddle} becomes separable by Lemma \ref{lemma:misc}(1):
\bqn
& & 
\min_{\hat P \in \text{conv}(\mathbb F_\theta)} \ 
	\sum_{j\in N^+} \overline{c}_j \sum_{k: j\sim k} \hat P_{jk}
\ \ \equiv \ \
\sum_{(j,k)\in E}\ \  \min_{(\hat P_{jk}, \hat P_{kj}) \in \text{conv}(\mathbb F_\theta^{jk})} \
\left( \overline{c}_j \hat P_{jk} + \overline c_k \hat P_{kj} \right)
\eqn
This reduces the problem to the two-bus case:
\bqn
\min_{(\hat P_{jk}, \hat P_{kj}) \in \text{conv}(\mathbb F_\theta^{jk})} \
\left( \overline{c}_j \hat P_{jk} + \overline c_k \hat P_{kj} \right)
\eqn
If either $\overline c_j>0$ or $\overline c_k>0$ then it can be seen from
Figure \ref{fig:geosocp}(b) that the minimizer $(P_{jk}, P_{kj})$ is in
$\mathbb F_\theta^{jk}$.   
We now show that $(P_{jk}, P_{kj}) \in \mathbb F_\theta^{jk}$ even when
both $\overline c_j \leq 0$ and $\overline c_k \leq 0$.

Since $c>0$, any node $i$ with $\overline c_i \leq 0$ has  
$\underline\lambda_i >0$ and hence $p_i = \underline p_i$.
  Consider the biggest
subtree $T$ that contains link $(j,k)$ in which every node $i$ has 
$\overline c_i\leq 0$ and $p_i = \underline p_i$.  
Call a node $l$ in the subtree $T$ a \emph{boundary node}
if it is a leaf or connected to another node $l'$ outside $T$ where
$\overline c_{l'}>0$.   Without loss of generality, take one of the boundary nodes
as the root of the network graph and assume this is node 0.   
For each line $(l,i)$ in the graph, node $i$ is called
the \emph{parent} of node $l$ if $i$ lies in the unique path from $l$ to the root
node 0.
\begin{lemma}
\label{lemma:-vec}
$(P_{li}, P_{il}) \in \mathbb F_\theta^{li}$ for every link $(l,i)$ in the subtree $T$.
\end{lemma}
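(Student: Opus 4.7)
The plan is to argue by induction on distance from the leaves of $T$ inward toward the chosen boundary root $0$. The key observation is that in the separated two-bus Lagrangian $\min_{(\hat P_{jk},\hat P_{kj}) \in \mathrm{conv}(\mathbb F_\theta^{jk})}\, (\overline c_j \hat P_{jk} + \overline c_k \hat P_{kj})$, the preceding argument in the proof of Theorem \ref{thm:pfOPF} already handles any link with at least one endpoint satisfying $\overline c > 0$, placing that link's flows on the arc $\mathbb F_\theta^{jk}$. So the only links that can a priori escape to the interior or chord boundary of $\mathrm{conv}(\mathbb F_\theta^{jk})$ are the internal links of $T$, for which both endpoint coefficients are nonpositive. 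The additional information that must be exploited on these links is the active constraint $p_l = \underline p_l$ at every node $l \in T$, which comes from complementary slackness ($\underline\lambda_l > 0$ whenever $\overline c_l \leq 0$, because $c > 0$).

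For the base step, take a link $(l,i) \in T$ where $l$ is a leaf of $T$. Every neighbor $k$ of $l$ other than $i$ lies outside $T$, so the previous argument gives $(P_{lk}, P_{kl}) \in \mathbb F_\theta^{lk}$ for each such $k$. The balance equation at $l$, namely $\underline p_l = \sum_{k \sim l} P_{lk}$, then pins down $P_{li}$ uniquely. With $P_{li}$ fixed, the Pareto-optimal trace of the SOCP on the remaining one-parameter face $\{(P_{li}, \hat P_{il}) : (P_{li}, \hat P_{il}) \in \mathrm{conv}(\mathbb F_\theta^{li})\}$ is the lower (arc) endpoint of the corresponding vertical segment, because condition C2 restricts $\mathbb F_\theta^{li}$ to an arc that coincides with its own Pareto front and any chord point is strictly dominated in both coordinates by the arc point directly below it. Hence $(P_{li}, P_{il}) \in \mathbb F_\theta^{li}$. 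For the inductive step, at any link $(l,i) \in T$ at depth $d$, every other link at $l$ is either outside $T$ (base step) or at strictly greater depth (inductive hypothesis), so the same balance-then-Pareto-projection argument applies.

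The main obstacle is the final sentence of the base step: the raw two-bus Lagrangian on an internal link of $T$ might well be minimized along the entire vertical chord rather than on the arc, so one cannot merely invoke extremality in $\mathrm{conv}(\mathbb F_\theta^{li})$. The fix is to use the \emph{global} Pareto optimality of $p$ rather than the local Lagrangian minimization: moving $P_{il}$ from the chord down to the arc while keeping $P_{li}$ fixed strictly decreases $P_{il}$, hence strictly decreases $p_i = \sum_{k \sim i} P_{ik}$ (with equality enforced by the lower bound at $i$ through its Lagrange multiplier, the decrement is absorbed on another incident link whose flows were already shown to lie on their arcs). Propagating this dominance upstream and invoking strict positivity of $c$ contradicts Pareto optimality unless the arc choice was made everywhere inside $T$, completing the induction.
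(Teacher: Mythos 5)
Your reduction to the separated two-bus subproblems and your use of the active constraints $p_l = \underline p_l$ throughout $T$ correctly identify both the difficulty and the available resource, but the mechanism you propose for closing the argument does not work. Your contradiction is supposed to come from moving $P_{il}$ from the chord down to the arc while keeping $P_{li}$ fixed, with the resulting decrement of $p_i$ "absorbed on another incident link." There are two ways to read this, and neither contradicts Pareto optimality. If the decrement is fully absorbed at node $i$ (by increasing some other $P_{ik}$), the injection vector $p = AP$ is unchanged, so the perturbed point is merely a different preimage of the same Pareto-optimal $p$ in $\mathrm{conv}(\mathbb F_\theta)$; nothing is contradicted. If it is not absorbed, then $p_i$ drops strictly below $\underline p_i$ (the bound is active because $\overline c_i \le 0$ forces $\underline \lambda_i > 0$), so the perturbed injection leaves $\mathbb P_p$ and is not a valid competitor. "Propagating upstream" does not rescue this: every node of $T$ has its lower bound active, so the decrement can never be deposited anywhere inside $T$, and at the boundary of $T$ the outgoing flows $P_{ll'}$ toward nodes $l'$ outside $T$ are already at their maxima over $\mathrm{conv}(\mathbb F_\theta^{ll'})$ (that is what the separated Lagrangian with $\overline c_{l'} > 0 \ge \overline c_l$ gives), so they cannot absorb an increase either. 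Your induction therefore stalls exactly at the step you flagged: knowing $P_{li}$ does not pin $(P_{li}, P_{il})$ to the arc, and no local exchange is available to force it there.

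The paper closes this gap with a global sandwich argument that has no counterpart in your proposal. It fixes a reference point $\tilde P$ with $(\tilde P_{li}, \tilde P_{il}) \in \mathbb F_\theta^{li}$ on every link of $T$ and $\underline p \le A \tilde P \le \overline p$ (such a point exists because OPF is assumed feasible), and proves by induction from the boundary of $T$ toward the root that $P_{li} \le \tilde P_{li}$ and $P_{il} \ge \tilde P_{il}$ on every link, using only $p_l = \underline p_l \le \tilde p_l$, the maximality of the boundary outflows, and the fact that the arc $\mathbb F_\theta^{li}$ is the Pareto front of its convex hull. The decisive step is then at the root: $\sum_l P_{0l} = \underline p_0 \le \tilde p_0 = \sum_l \tilde P_{0l}$ together with the termwise inequalities $P_{0l} \ge \tilde P_{0l}$ forces $P_{0l} = \tilde P_{0l}$ for every neighbor $l$ of the root, and these equalities propagate back down the tree, collapsing all the inequalities to equalities and placing every pair on its arc. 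It is this second, root-to-boundary pass, driven by the aggregate power balance at the root rather than by any local perturbation, that your argument is missing.
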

\begin{proof}[Proof of Lemma \ref{lemma:-vec}]
Consider any $\tilde P$ that satisfies 
$(\tilde P_{li}, \tilde P_{il}) \in \mathbb F_\theta^{li}$ for every link
$(l,i)\in T$ and $\underline p \leq \tilde p = A\tilde P \leq \overline p$.
We will first prove that, for every link $(l,i)\in T$, 
\bq
P_{li}\  \leq \  \tilde P_{li} & \text{and} &
P_{il}\  \geq \  \tilde P_{il} 
\label{eq:PtildeP}
\eq
We then use this to prove that 
$(P_{li}, P_{il}) = (\tilde P_{li}, \tilde P_{il}) \in \mathbb F_\theta^{li}$.

Consider first a boundary node $l$.  If $l$ is a leaf node then, 
since $\overline c_l\leq 0$,
$P_{li} = p_l = \underline p_l \leq \tilde p_l = \tilde P_{li}$.  Then, since 
$P_{li}\in \text{conv}(\mathbb F_\theta^{li})$ and $\tilde P_{li}\in \mathbb F_\theta^{li}$,
we have $P_{il} \geq \tilde P_{il}$; see Figure \ref{fig:-vec}(a).
Otherwise let $l'$ outside $T$ be a neighbor of $l$.  Since $\overline c_l\leq 0$
but $\overline c_{l'} > 0$, the minimization of 
$\overline c_{l'} \hat P_{l'l} + \overline c_l \hat P_{ll'}$ over conv$(\mathbb F_\theta^{ll'})$ means $P_{ll'} = \underline\pi_{ll'}$; see Figure \ref{fig:-vec}(b).
Hence  $P_{ll'} = \underline\pi_{ll'} \geq \tilde P_{ll'}$.
This holds for all neighbors $l'$ of $l$.  Hence
\bqn
P_{li} \ = \ p_l - \sum_{l'} P_{ll'} & \leq & \tilde p_l - \sum_{l'} \tilde P_{ll'} \ = \  \tilde P_{li}
\eqn
where $l'$ ranges over all neighbors (outside $T$) of $l$ except its parent $i$
in $T$.   From the region of possible values for $(P_{li}, P_{il})$ in 
Figure \ref{fig:-vec}(c), we conclude that $P_{il} \geq \tilde P_{il}$.
 Hence the claim is true for all links $(l,i)$ where $l$ is a boundary node.
\begin{figure}[htbp]
\centering
	\includegraphics[width=0.8\textwidth]{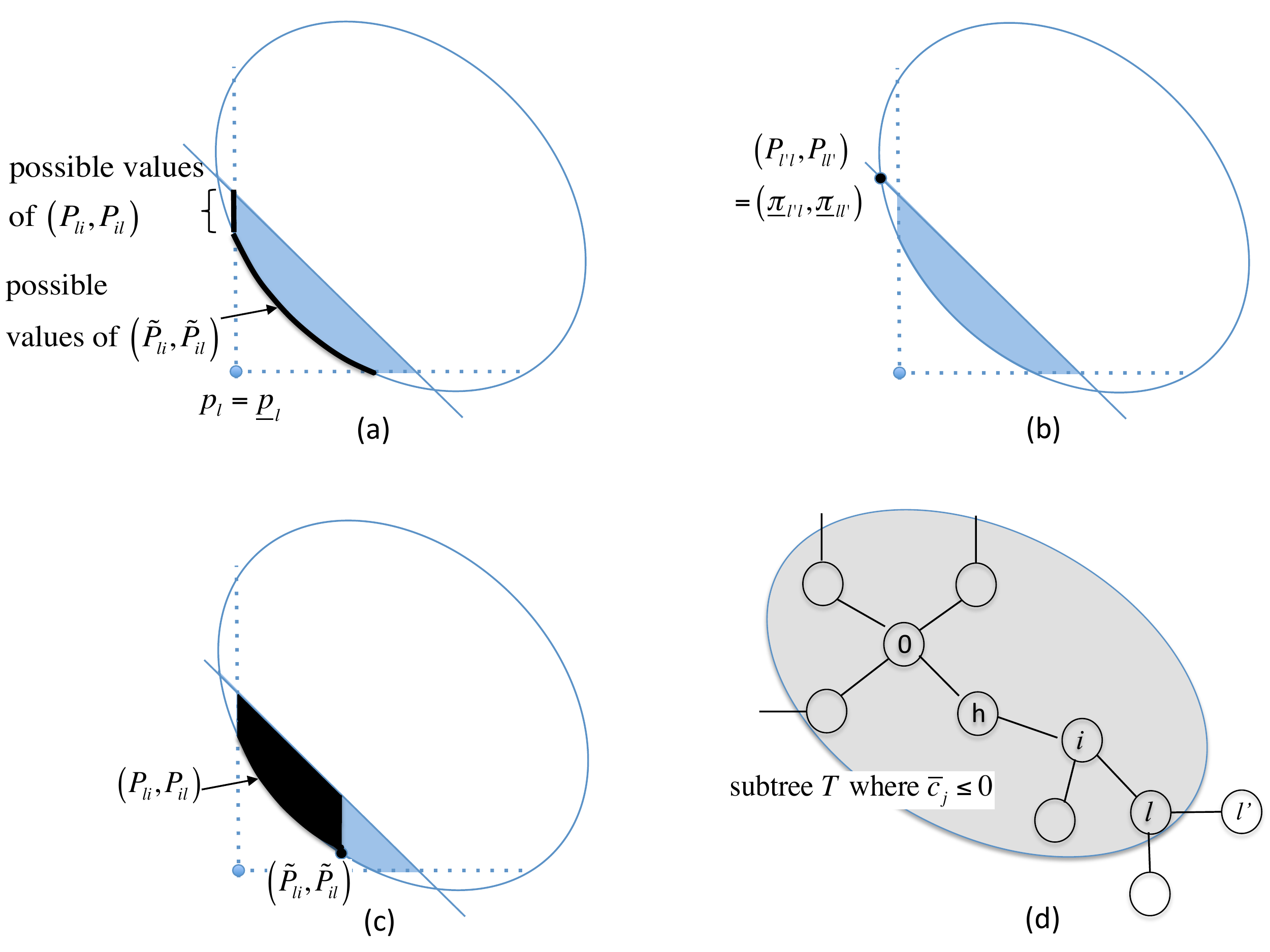}
\caption{Illustration for the proof of Lemma \ref{lemma:-vec}.}
\label{fig:-vec}
\end{figure}

Consider node $i$ one hop away from a boundary node towards root note 0
and let its parent be node $h$; see Figure \ref{fig:-vec}(d).
The above argument says that
$P_{il} \geq \tilde P_{il}$ for all neighbors $l$ of $i$ except its parent $h$.
This together with $p_i = \underline p_i$ (since $\overline c_i\leq 0$) implies
\bqn
P_{ih} \ = \  p_i - \sum_l P_{il}  & \leq & \tilde p_i - \sum_l \tilde P_{il} \ = \  \tilde P_{ih}
\eqn
and hence as before $P_{hi} \geq \tilde P_{hi}$.  Propagate towards the root
node 0 and \eqref{eq:PtildeP} follows by induction.

We now use \eqref{eq:PtildeP} to show that 
$(P_{li}, P_{il})\in \mathbb F_\theta^{li}$ for every link $(l,i)$ in the
subtree $T$.   Now \eqref{eq:PtildeP} implies that 
$P_{0l} \geq \tilde P_{0l}$ for all neighbors $l$ of 0.   Since node 0 has no 
parent, we have
\bqn
\sum_l P_{0l} \ = \ p_0  \ = \  \underline p_0 & \leq & \tilde p_0 \ = \ \sum_l \tilde P_{0l}
\eqn
implying $P_{0l} = \tilde P_{0l}$ for all neighbors $l$ of node 0.
This implies $P_{l0} = \tilde P_{l0}$; see Figure \ref{fig:-vec}(a) and (c).
Repeat this argument propagating from node 0 towards the boundary nodes
of the subtree $T$, and we conclude that
$(P_{li}, P_{il}) = (\tilde P_{li}, \tilde P_{il}) \in \mathbb F_\theta^{li}$ 
for every link $(l,i)$ in $T$.
This completes the proof of Lemma \ref{lemma:-vec}.
\end{proof}
This completes the proof of Lemma \ref{lemma:supset}.
\end{proof}

This completes the proof of Theorem \ref{thm:pfOPF}.

\vspace{0.2in}
\begin{proof}[Proof of Lemma \ref{lemma:misc}]
\bee
\item[(1)] Now $x\in \text{conv}(M\mathbb B)$ if and only if $x$ is a 
	finite convex combination of vectors in $M\mathbb B$, i.e., if and
	only if $x = \sum_j \alpha_j\, My_j = M\, \sum_j \alpha_j y_j$ for some 
	$y_j\in \mathbb B$, or equivalently, $x\in M\, \text{conv}(B)$.
	
	Similarly $x := (x^1,\ x^2) \in \text{conv}(\mathbb B \times \mathbb C)$ if and only if
	$(x^1,\  x^2) = \sum_j \alpha_j (x^1_j,\  x^2_j)$ for some $x^1_j\in \mathbb B$ and
	$x^2_j\in \mathbb C$ if and only if 
	$x^i = \sum_j \alpha_j x^i_j$, $i=1, 2$, i.e., 
	$x\in \text{conv}(\mathbb B) \times \text{conv}(\mathbb C)$.
	
\item[(2)] Now
	$x\in \mathbb O(M\mathbb B)$ if and only if there is a $c > 0$
	such that $x=\arg\min_{\hat{x}\in M\mathbb B} c^T\hat{x}$ 
	if and only if $x=My$ with
	$y=\arg\min_{\hat{y}\in \mathbb B} (M^Tc)^T \hat{y}$, i.e., 
	$y\in \mathbb O(\mathbb B)$ or equivalently $x\in M\mathbb O(\mathbb B)$.
	
	Similarly 
	$x := (x^1, \ x^2) \in \mathbb O(\mathbb B \times \mathbb C)$ if and only if
	$x$ solves, for some $c^1>0, c^2> 0$,
	\bqn
	\min_{(\hat x^{1}, \, \hat x^{2})\in 
			\mathbb B \times \mathbb C} (c^1)^T \hat x^{1} + (c^2)^T \hat x^{2}
	& \equiv & \min_{\hat x^{1}\in \mathbb B} (c^1)^T \hat x^{1} + 
			\min_{\hat x^{2}\in \mathbb C} (c^2)^T \hat x^{2}
	\eqn
	i.e., $x\in \mathbb O(\mathbb B) \times \mathbb O(\mathbb C)$.

\item[(3)] The key observation is that 
	$\mathbb B = \mathbb O(\text{conv}(\mathbb B))$, as opposed to
	$\mathbb B \supset \mathbb O(\text{conv}(\mathbb B))$, implies that
	\emph{every} point $x\in \mathbb B$ is a minimizer of $c^T\hat x$ over 
	$\text{conv}(\mathbb B)$ for some nonzero $c\geq 0$.  In particular every
	$x\in \mathbb B \cap \mathbb D$ is a minimizer of $c^T\hat x$ over 
	$\text{conv}(\mathbb B)$, for some nonzero $c\geq 0$, and hence is a
	minimizer over $\text{conv}(\mathbb B) \cap \mathbb D$.  This shows
	that $x\in \mathbb O(\text{conv}(\mathbb B) \cap \mathbb D)$, and hence
	$\mathbb B \cap \mathbb D \subseteq 
	\mathbb O(\text{conv}(\mathbb B) \cap \mathbb D)$.
\eee
\end{proof}

\subsection{Proof of Theorem \ref{thm:eqps}: mesh networks with phase shifters}

The proof follows that in \cite{Farivar-2013-BFM-TPS}.
\begin{proof}
We first prove $\overline{\mathbb X}_T \equiv  {\mathbb X}_{nc}$.  
It will then be clear that $\overline{\mathbb X}_T = \overline{\mathbb X}$.
To prove $\overline{\mathbb X}_T \equiv  {\mathbb X}_{nc}$,
we will exhibit a mapping $h: \overline{\mathbb X}_T \rightarrow \mathbb X_{nc}$ and
its inverse $h^{-1}$ and prove that $\tilde x\in  \overline{\mathbb X}_T$ if and only if
$x := h(\tilde x) \in \mathbb X_{nc}$, i.e., $\tilde x$ satisfies \eqref{eq:bfmps} if and only if
$x$ satisfies \eqref{eq:mdf} with equalities in \eqref{eq:mdf.3}.

Recall the function $h$ that, given any $\phi\in (-\pi, \pi]^m$ with $\phi\in T^\perp$, 
maps an $\tilde x \in \overline{\mathbb X}_T$ to a point $x \in \mathbb X_{nc}$:
\bqn
x \ := \ (S, \ell, v, s) \ := \ h(S, I, V, s) \ =: \ h(\tilde x) =: h(\tilde x; \phi)
\eqn
with $\ell_{jk} := | I_{jk} |^2$ and $v_j := |V_j|^2$.

We abuse (to simplify) notation and use $\theta$ to denote either an $n$-dimensional vector
$\theta := (\theta_j, j\in N)$ or an $(n+1)$-dimensional vector $\theta := (\theta_j, j\in N^+)$ 
with $\theta_0 := \angle V_0 := 0^\circ$, depending on the context.   
To construct an inverse of $h$, first consider, for each $\theta \in (-\pi, \pi]^{n+1}$,  
the mapping $\tilde h_\theta(S, \ell, v, s) = (S, I, V, s)$ 
from $\mathbb{R}^{3(m+n+1)}$ to $\mathbb{C}^{2(m+n+1)}$  by:
\begin{subequations}
\bq
V_j & := & \sqrt{v_j} \ e^{\ii \theta_j},  \qquad\qquad\qquad\ \ \   j \in N^+
\\
I_{jk} & := & \sqrt{\ell_{jk}}\ e^{\ii (\theta_j - \angle S_{jk})}, \qquad  j\rightarrow k \in \tilde E
\label{eq:bfs.3}
\eq
\label{eq:defhinv.2}
\end{subequations}
We now proceed in three steps:
(1) Prove that, given any $\beta(x)$, $x\in \mathbb X_{nc}$,
 there is a unique $(\theta(x), \phi(x))\in (-\pi, \pi]^{n+m}$ with $\phi(x)\in T^\perp$
that satisfies \eqref{eq:cyclecond.ps}.
(2) Prove that the function $h^{-1}(x) := \tilde h_{\theta(x)} (x)$ maps each 
$x \in {\mathbb X}_{nc}$
to an $\tilde x\in \overline{\mathbb X}_T$ that satisfies BFM with phase shifters \eqref{eq:bfmps};
(3) Prove that $h^{-1}$ as defined is indeed an inverse of $h$, establishing 
$\overline{\mathbb X}_T \equiv {\mathbb X}_{nc}$.

\noindent
\emph{Step 1: solution of \eqref{eq:cyclecond.ps} always exists.}
Fix an $x$ and the corresponding $\beta := \beta(x)$.
Write $\phi = [\phi_T^t\ \ \phi_{\perp}^t]^t$ and set $\phi_T = 0$.  Then
	\eqref{eq:cyclecond.ps} becomes
	\bq
	\begin{bmatrix}   B_T \\ B_{\perp}   \end{bmatrix}  \theta
	& = & 
	\begin{bmatrix}   \beta_T  \\  \beta_{\perp}   \end{bmatrix}  -  
	\begin{bmatrix}   0 \\  \phi_{\perp}   \end{bmatrix}
	+ 2\pi
	\begin{bmatrix}   k_T \\  k_{\perp}   \end{bmatrix}
	\label{eq:atb.4}
	\eq
	Hence a vector $({\theta}_*,  {\phi}_*, k_*)$ with $\theta_* \in (-\pi, \pi]^n$ and 
	${\phi}_*  \in T^\perp$ is a solution of \eqref{eq:atb.4} if and only if
	\bqn
		B_{\perp} B_T^{-1} \beta_T = \beta_{\perp} - [\phi_*]_{\perp} + 2\pi \left[ \hat{k}_* \right]_\perp
	\eqn
	where $\left[ \hat{k}_* \right]_\perp := [k_*]_\perp - B_{\perp} B_T^{-1} [k_*]_T$ is
	an integer vector.
	Clearly this can always be satisfied by choosing 
	\bq
	[\phi_*]_{\perp} - 2\pi \left[ \hat{k}_* \right]_\perp & = & \beta_{\perp} - B_{\perp} B_T^{-1}\beta_T
	\label{eq:phi}
	\eq
	Note that given $\theta_*$, $[k_*]_T$ is uniquely determined since $[\phi_*]_T = 0$,
	but $([\phi_*]_\perp, [k_*]_\perp)$ can be freely chosen to satisfy \eqref{eq:phi}.
	Hence we can choose the unique $[k_*]_\perp$ such that $[\phi_*]_\perp \in (-\pi, \pi]^{m-n}$.
	We have thus shown that there always exists a unique 	$({\theta}_*,  {\phi_*})$, 
	with $\theta_* \in (-\pi, \pi]^n$, $\phi_* \in (-\pi, \pi]^m$ and ${\phi}_*  \in T^\perp$, 
	that solves \eqref{eq:cyclecond.ps} for some $k_* \in \mathbb N^m$.
	Indeed this unique vector $(\theta_*, \phi_*)$ is given by
	\bqn
	\theta_* & = & \mathcal{P} \left( B_T^{-1}\beta_T  \right)
	\\
	\phi_*  & = &   \mathcal{P} \left( 
			\begin{bmatrix}
			0  \\  \beta_{\perp} - B_{\perp} B_T^{-1}\beta_T
			\end{bmatrix}
			\right)
	\eqn
	where $\mathcal{P}(\cdot)$ projects each component of a vector on to $(-\pi, \pi]$.

\noindent
\emph{Step 2: $\tilde h_{\theta(x)}(x)$ is in $\overline{\mathbb X}_T$.}
Fix any $x\in \mathbb X_{nc}$.   Let $(\theta(x), \phi(x))$ denote the unique vector in 
$(-\pi, \pi]^{n+m}$ with $\phi(x) \in T^\perp$ derived in Step 1 that solves \eqref{eq:cyclecond.ps}.
We claim that, given any $(\theta, \phi) \in (-\pi, \pi]^{n+m}$ with $\phi \in T^\perp$, 
$\tilde x := \tilde h_{\theta}(x)$ satisfies
BFM \eqref{eq:bfmps} with phase shifters  $\phi$ if and only if $\theta = \theta(x)$
and $\phi = \phi(x)$.
In that case, $\tilde x \in \overline{\mathbb X}_T$.
The argument is similar to the proof of \cite[Lemma 14]{Low2014a} without phase
shifters with the only change that $\tilde x$ here needs to satisfy \eqref{eq:bfmps.1} with possibly
nonzero $\phi$.

Specifically \eqref{eq:mdf.1} is equivalent to \eqref{eq:bfmps.3}; \eqref{eq:mdf.3} with equalities
and \eqref{eq:bfs.3} imply \eqref{eq:bfmps.2}.
For \eqref{eq:bfmps.1}, we have from \eqref{eq:mdf.2}, 
\bqn
|V_j|^2 & = & |V_i|^2 + |z_{ij}|^2 |I_{ij}|^2 - (z_{ij} S_{ij}^H + z_{ij}^H S_{ij})
\\
	& = & |V_i|^2 + |z_{ij}|^2 |I_{ij}|^2 - (z_{ij} V_i^H I_{ij} + z_{ij}^H V_i I_{ij}^H)
\\
	& = & |V_i - z_{ij} I_{ij}|^2
\eqn
Hence
\bq
|V_j\, e^{-\ii \phi_{ij}}| & = & |V_i - z_{ij} I_{ij}|
\label{eq:vje}
\eq
Since $(\theta(x), \phi(x))$ solves \eqref{eq:cyclecond.ps}, we have
\bqn
\theta_i - \theta_j & = & \angle \left( v_i - z_{ij}^H V_i I_{ij}^H \right) - \phi_{jk} +  2\pi k_{ij}
\ \, = \, \  \angle \ V_i \left(V_i - z_{ij} I_{ij} \right)^H \ - \ \phi_{ij} \ + \ 2\pi k_{ij}
\eqn
for some integer $k_{ij}$.
Hence
\bq
\theta_j \ - \ \phi_{ij}& = & \angle \left(V_i - z_{ij} I_{ij}\right)  \ - \ 2\pi k_{ij}
\label{eq:aVj}
\eq
But \eqref{eq:vje} and \eqref{eq:aVj} means 
\bqn
V_j\, e^{-\ii \phi_{ij}} & = & V_i - z_{ij} I_{ij}
\eqn
which is \eqref{eq:bfmps.1}, as desired.
Hence $\tilde x := \tilde h_{\theta(x)} (x) \in \overline{\mathbb X}_T$.

\noindent
\emph{Step 3: inverse of $h$.}   By definition of $h$, $h(\tilde x)$ is in $\mathbb X_{nc}$ for
every point $\tilde x\in \overline{\mathbb X}_T$.  
Let $h^{-1}(\cdot) := \tilde h_{\theta(\cdot)}(\cdot)$.
Step 2 shows that $h^{-1}(x)$ is in $\overline{\mathbb X}_T$ for every point $x\in \mathbb X_{nc}$.
Clearly $h(h^{-1}(x)) = h(\tilde h_{\theta(x)} (x)) = x$.  Hence $h$ and $h^{-1}$ are indeed inverses
of each other.
This establishes a bijection between $\overline{\mathbb X}_T$ and $\mathbb X_{nc}$, proving
their equivalence.

Finally, to show that $\overline{\mathbb X}_T = \overline{\mathbb X}$, write 
\eqref{eq:cyclecond.ps} as
	\bqn
	\begin{bmatrix}   B_T \\ B_{\perp}   \end{bmatrix}  \theta
	& = & 
	\begin{bmatrix}   \beta_T  \\  \beta_{\perp}   \end{bmatrix}  -  
	\begin{bmatrix}   \phi_T \\  \phi_{\perp}   \end{bmatrix}
	+ 2\pi
	\begin{bmatrix}   k_T \\  k_{\perp}   \end{bmatrix}
	\eqn
The same argument as in Step 1 (which is for the case with $\phi_T=0$) 
shows that, given any $\beta$, 
if the above equation has a solution $(\theta, \phi)$ then it has a solution with $\phi_T=0$.
Hence $\overline{\mathbb X}_T = \overline{\mathbb X}$. 

This completes the proof of Theorem \ref{thm:eqps}.
\end{proof}

\newpage
\bibliographystyle{unsrt}
\bibliography{PowerRef-201202}

\end{document}